\newtheorem{theorem}{Theorem}[section]
\newtheorem{proposition}[theorem]{Proposition}
\newtheorem{lemma}[theorem]{Lemma}
\newtheorem{example}[theorem]{Example}
\newtheorem{corollary}[theorem]{Corollary}
\theoremstyle{definition}
\newtheorem{remark}[theorem]{Remark}
\newtheorem*{assumptions}{Assumptions}
\newcommand\GL{\operatorname{GL}}
\newcommand\SL{\operatorname{SL}}
\newcommand\Mat{\operatorname{Mat}}
\newcommand\Ind{\operatorname{Ind}}
\newcommand\C{\mathbb{C}}
\newcommand\Z{\mathbb{Z}}
\newcommand\Treg{{\widehat{T}_{\operatorname{reg}}}}
\newcommand{\sage}{{\sc Sage}}
\newcommand{\Aa}{{\mathcal{A}}}
\newcommand{\Ai}{{A}}
\begin{document}

\title{Hecke Modules from Metaplectic Ice}
\author{Ben Brubaker}
\address{School of Mathematics, University of Minnesota, Minneapolis, MN 55455}
\email{brubaker@math.umn.edu}
\author{Valentin Buciumas}
\address{Einstein Institute of Mathematics, Edmond J. Safra Campus, Givat Ram, The Hebrew University of Jerusalem, Jerusalem, 91904, Israel}
\email{valentin.buciumas@gmail.com}
\author{Daniel Bump}
\address{Department of Mathematics, Stanford University, Stanford, CA 94305-2125}
\email{bump@math.stanford.edu}
\author{Solomon Friedberg}
\address{Department of Mathematics, Boston College, Chestnut Hill, MA 02467-3806}
\email{solomon.friedberg@bc.edu}

\subjclass[2010]{Primary 20C08; Secondary 11F68, 16T20, 16T25, 22E50}
\keywords{Hecke algebra, Whittaker function, R-matrix, quantum group, metaplectic group}
\begin{abstract}
We present a new framework for a broad class of affine Hecke algebra
modules, and show that such modules arise in a number of settings
involving representations of $p$-adic groups and $R$-matrices for
quantum groups. Instances of such modules arise from (possibly
non-unique) functionals on $p$-adic groups and their metaplectic covers,
such as the Whittaker functionals. As a byproduct, we obtain new,
algebraic proofs of a number of results concerning metaplectic Whittaker
functions. These are thus expressed in terms of metaplectic versions of
Demazure operators, which are built out of $R$-matrices of quantum
groups depending on the cover degree and associated root system.
\end{abstract}
\maketitle


An important tool in studying representations of $p$-adic groups is the affine Hecke algebra.
Ion~\cite{IonMatrix}, Brubaker, Bump and Licata~{\cite{BrubakerBumpLicata}},
Brubaker, Bump and Friedberg~{\cite{BrubakerBumpFriedbergMatrix, BrubakerBumpFriedbergPacificJ}} and other papers considered
representations of Hecke algebras related to models of representations of
$p$-adic groups. These works mainly focused on unique functionals such as the
Whittaker, spherical and Bessel functionals, but, as we shall show here, the
ideas are also applicable to certain non-unique functionals on central
extensions of $p$-adic groups. The goal is to show that these functionals act
as Hecke algebra module maps to various target spaces. This results in
recursion formulas for Iwahori fixed vectors in the model using Demazure-like
operators based on the Hecke action. It allows for effective computation,
provides links to the geometry of associated Bott-Samelson varieties, and
proves that these functions match important classes of symmetric functions,
such as various specializations of non-symmetric Macdonald polynomials.

In a separate direction, Brubaker, Bump and Friedberg~\cite{hkice} showed that
the values of the spherical Whittaker function for unramified principal series
of $GL_r(F)$, where $F$ is a $p$-adic field, may be realized as partition
functions of a solvable lattice model. A similar model for the spherical
Whittaker functions for covers of $GL_r(F)$ -- a situation in which the
Whittaker functional is no longer unique -- was given in Brubaker, Bump,
Chinta, Friedberg and Gunnells~\cite{mice}.  However, except when the cover is
trivial, this model was not solvable in the sense of Baxter~\cite{Baxter}. The
Boltzmann weights were altered to obtain a solvable model in
Brubaker, Buciumas and Bump~{\cite{BrubakerBuciumasBump}}. This is the
``metaplectic ice" of the title.  Moreover, {\cite{BrubakerBuciumasBump}}
provided a new algebraic framework, relating the models to the $R$-matrices of
a Drinfeld twist of the quantum affine Lie superalgebra $U_q
(\widehat{\mathfrak{g}\mathfrak{l}} (n| 1))$. As a biproduct of this
discovery, a relationship between the scattering matrices of the intertwining
integrals on Whittaker coinvariants and $R$-matrices of a Drinfeld twist of
$U_q(\widehat{\mathfrak{g}\mathfrak{l}} (n))$ came to light.

In the present paper, we will unify these directions by providing simultaneous
foundations for both. The search for new foundations is motivated by the
following consideration. In \cite{BrubakerBumpFriedbergMatrix}, we took
the point of view of the \textit{universal principal series}. This
means that the induced character becomes $\mathbb{C}[\Lambda]$-valued, where
$\Lambda$ is the coweight lattice of the associated algebraic group $G$,
or equivalently, the weight lattice of the Langlands dual group $\widehat{G}$.
On the other hand, in \cite{BrubakerBuciumasBump}, this does not work
so well, since the the complex parameters of the induced character become
spectral parameters in the $R$-matrix. To unify these directions, we
take a new approach making use of a \textit{representation schema} -- an axiomatic
description of certain representations of the affine Hecke algebra.  We then
give examples of such schema coming from the various aforementioned sources:
from representations of $p$-adic groups or their covers, from models for
various functionals on these representations, and from $R$-matrices of quantum
groups.  We explain how these examples are related and use the formalism to
extend the work of \cite{BrubakerBumpFriedbergMatrix} to the metaplectic
setting, where metaplectic versions of Demazure operators as in
Chinta, Gunnells and Pusk\'as~\cite{ChintaGunnellsPuskas}, using the action of
Chinta and Gunnells~\cite{ChintaGunnells}, appear naturally and inherit an $R$-matrix
interpretation.

We now describe our results in more detail, first introducing the affine Hecke algebra. 
Let $W$ be a finite Weyl group, acting on the root system $\Phi^{\vee}$ inside
$\Lambda$. Thus $W$ is a finite Coxeter group with simple reflections
$s_i$. Let $n (i, j)$ be the order of $s_i s_j$ when $i\neq j$. We will denote
by $\alpha_i^{\vee}$ the simple roots.

The \textit{affine Hecke algebra} has the following presentation, due to
Bernstein. It has generators $T_i$ corresponding to the simple roots, together
with an abelian subalgebra $\theta_{\Lambda}$ isomorphic to the group
algebra of $\Lambda$. The $T_i$ satisfy the quadratic relations $T_i^2
= (v - 1) T_i + v$, where $v$ is an indeterminate or a complex number, and the
braid relations $T_i T_j T_i \cdots = T_j T_i T_j \cdots$, where there are $n
(i, j)$ factors on both sides. The algebra $\theta_{\Lambda}$ has a
basis $\theta_{\lambda}$ with $\lambda \in \Lambda$. The
\textit{Bernstein relation} is also required:
\begin{equation}
  \label{bernstein} \theta_{\lambda} T_i - T_i \theta_{s_i \lambda} = (v - 1) 
  \frac{\theta_{\lambda} - \theta_{s_i \lambda}}{1 - \theta_{-
  \alpha_i^{\vee}}} .
\end{equation}
Note that on the right-hand side the numerator is divisible by the denominator in
$\theta_{\Lambda}$. We will denote by
$\widetilde{\mathcal{H}}_v$ the algebra generated by the $T_i$ and
$\theta_{\Lambda}$ with these generators and relations. We will also
denote by $\mathcal{H}_v$ the finite-dimensional subalgebra generated by just
the $T_i$. Then $\dim \hspace{0.17em} \mathcal{H}_v = |W|$ and as a vector
space $\widetilde{\mathcal{H}}_v = \mathcal{H}_v \otimes
\theta_{\Lambda}$.

We will describe various representations of
$\widetilde{\mathcal{H}}_v$ axiomatically. 
One instance 
comes from the theory of $p$-adic groups, so let us start with that.
Suppose that $G$ is a split reductive group over a nonarchimedean local field
$F$ with residue cardinality $q$. Let $T$ be a split maximal torus, and assume
that $\Lambda$ is identified with the cocharacter group $X_{\ast} (T)$.
Then $\Lambda$ is also identified with the group $X^{\ast} (\widehat{T})$
of rational characters of the dual torus $\widehat{T}$. This is the maximal torus
in the (connected) Langlands dual group $\widehat{G}$. We will then assume that
$\Phi^{\vee}$ is the root system of $\widehat{G}$. Let $J$ be an Iwahori subgroup
of $G (F)$. Then by combining results of Iwahori and Matsumoto~{\cite{IwahoriMatsumoto}} and
Proposition~3.6 of Lusztig~{\cite{LusztigGraded}}, $\widetilde{\mathcal{H}}_q$ is
isomorphic to the convolution algebra $\mathcal{H}_J$ of compactly-supported
$J$-bi-invariant functions on $G (F)$. Therefore if $(\pi, V)$ is any $G (F)$
representation, then $\widetilde{\mathcal{H}}_q$ acts on the space $V^J$ of
$J$-fixed vectors.

On the other hand, there is another way that the affine Hecke algebra enters
the representation theory of $G (F)$, namely through the intertwining
operators. If $\mathbf{z} \in \widehat{T} (\mathbb{C})$, then $\mathbf{z}$
parametrizes a principal series representation $M (\mathbf{z})$ of $G (F)$,
and if $w$ is any element of the Weyl group $W$, there is an intertwining
operator, defined by an integral, $\Ai_w : M (\mathbf{z}) \to M (w \mathbf{z})$.
Such operators play an important role in many aspects of the representation theory.
Accordingly, it is key that one may use the Hecke algebra to model the
intertwining operators $\Ai_w$. There are at least three different approaches.

First, there is the method of Rogawski~{\cite{Rogawski}}, which begins with the
observation that the space of $J$-fixed vectors in $M (\mathbf{z})$ can be
identified with $\mathcal{H}_v$. (Both are $|W|$-dimensional vector spaces.)
Thus the intertwining operators $\Ai_{s_i}$ become endomorphisms of
$\mathcal{H}_q$, and it is possible to give formulas for them in terms of
$T_i$, depending on $\mathbf{z}$.

The second method replaces the representations $M (\mathbf{z})$ with the
\textit{universal principal series}, which are representations induced not
from the Borel subgroup but from its maximal unipotent subgroup extended by $T
(\mathfrak{o})$, where $\mathfrak{o}$ is the ring of integers of the local
field $F$. For this, see Chriss and Khuri-Makdisi~{\cite{ChrissKhuriMakdisi}} and
Haines, Kottwitz and Prasad~{\cite{HainesKottwitzPrasad}}.

Here we will give a third approach. We will proceed axiomatically
from data consisting of vector spaces $\mathcal{M} (\mathbf{z})$ and maps
$\Aa_{s_i} : \mathcal{M} (\mathbf{z}) \longrightarrow \mathcal{M}
(s_i \mathbf{z})$ when $s_i$ is a
simple reflection. Assuming simple axioms, the \textit{representation schema}
which we describe in Section~\ref{schemasect} below
produces an action of $\tilde{\mathcal{H}}_v$ on
\[\mathfrak{M}(\mathbf{z})=\bigoplus_{w \in W}\mathcal{M} (w\mathbf{z}).\]
We emphasize that we give different instances of the schema in this paper and so
the meaning of $\mathcal{M}(\mathbf{z})$ and $\mathfrak{M}(\mathbf{z})$
is not the same throughout the text.

There are a number of different examples of the schema attached to a $p$-adic group
(without even passing to a covering group).
As a first example we may take $v = q^{- 1}$, where $q$ is the residue cardinality, and $\mathcal{M}
(\mathbf{z})$ to be the principal series representation $M (\mathbf{z})$.
Thus we get an action of the affine Hecke algebra
$\widetilde{\mathcal{H}}_{q^{- 1}}$ on
\begin{equation}
  \label{mtheory} \bigoplus_{w \in W} M (w \mathbf{z}) .
\end{equation}
These spaces are all infinite dimensional. The construction initially assumes
$\mathbf{z}$ is in the regular set $\widehat{T}_{\text{reg}}$, that is,
$\mathbf{z}^{\alpha^{\vee}}\neq 1$ for any positive root $\alpha^{\vee}$,
but it is possible to remove this assumption in certain cases.

Because the action is built from the intertwining operators, it commutes with
the action of $G (F)$. So if $K$ is any compact subgroup of $G (F)$, we may
take $K$-invariants, and we obtain a left action of
$\widetilde{\mathcal{H}}_{q^{- 1}} \otimes \mathcal{H}_K$ on
\[ \mathfrak{M}(\mathbf{z})^K := \bigoplus_{w \in W} M (w \mathbf{z})^K . \]
For example, if $K$ is the special maximal compact subgroup $K^{\circ}$, then
to each $w \in W$, the space $M (w \mathbf{z})^{K^{\circ}}$ is one-dimensional
and so $\mathfrak{M}^{K^\circ}$ is $|W|$-dimensional.  Additional structure beyond the action
of a single affine Hecke algebra will arise in other examples, including
those from $R$-matrices, as well.

If instead $K = J$, then $\mathfrak{M}^J$ is $|W|^2$-dimensional and is a
module for two copies of the affine Hecke algebra. With this choice
the decomposition
\begin{equation}
  \label{mjh2decomp}
  \mathfrak{M}^J = \bigoplus_{w \in W} \mathcal{M}(w\mathbf{z}),
  \qquad \mathcal{M}(w\mathbf{z})=M (w \mathbf{z})^J
\end{equation}
of the 
$\widetilde{\mathcal{H}}_{q^{-1}} \otimes \widetilde{\mathcal{H}}_{q}$-module
is into $\mathfrak{M}^J=\widetilde{\mathcal{H}}_{q}$-invariant subspaces.
Here $\widetilde{\mathcal{H}}_{q^{-1}}$ refers to the affine
Hecke algebra acting through the representation schema
(Theorems~\ref{hecketheorem} and~\ref{hecketheoremaffine}),
whereas $\mathcal{H}_J\cong\widetilde{\mathcal{H}}_q$ refers to the action
on each summand $M(w\mathbf{z})^J$ by convolution. (The two
Hecke algebras with parameters $q^{-1}$ and $q$
are actually isomorphic, but we will not make use of this.)

By symmetry we may hope for another decomposition
\begin{equation}
  \label{mjh1decomp}
  \mathfrak{M}^J = \bigoplus_{w \in W} \mathcal{M}'(w \mathbf{z})
\end{equation}
into $|W|$-dimensional $\widetilde{\mathcal{H}}_{q^{-1}}$-invariant
subspaces. This may be accomplished by decomposing with
respect to the abelian subalgebra $\theta_{\Lambda}$ of
$\mathcal{H}_J=\widetilde{\mathcal{H}}_{q}$. It may be
shown that $|W|$ different characters of $\theta_{\Lambda}$
occur, and decomposing with respect to these gives the second
decomposition (\ref{mjh1decomp}).

Instead of taking $\mathcal{M}(\mathbf{z})$ to be the invariants
of $M(\mathbf{z})$ with respect to some subgroup, we may take
the module of coinvariants. Specifically, let $U$ be a subgroup of $G (F)$,
and let $\psi$ be a character of $U$. The 
\textit{module of coinvariants} of a $G (F)$-module $(\pi, V)$ is
\[ V_{U, \psi} = V / \langle \pi (u) v - \psi (u) v \rangle . \]
We may take $\mathcal{M}(\mathbf{z})$ to be $M(\mathbf{z})_{U,\psi}$
in the schema.  Then 
\begin{equation}
  \label{whitcoimod} \bigoplus_{w \in W} M (w \mathbf{z})_{U, \psi}
\end{equation}
becomes a module for $\widetilde{\mathcal{H}}_{q^{- 1}}$.

For example, let $U$ be the maximal unipotent subgroup
generated by the positive roots. Its normalizer is the positive
Borel subgroup $B$, and with $\psi$ the trivial character,
$M(\mathbf{z})_{U,1}$ is the usual $|W|$-dimensional
Jacquet module $\operatorname{Jac}(M(\mathbf{z}))$, which is a module for $T(F)$. With
$\mathbf{z}\in\Treg(\C)$, the Jacquet module is a direct sum of $|W|$
distinct quasicharacters $\chi_{w\mathbf{z}}$ of $T(F)$.
Since the action of $T(F)$ commutes with the action of
$\widetilde{\mathcal{H}}_{q^{-1}}$, we may then decompose
this 
\[\bigoplus_{w\in W}\operatorname{Jac}(M(\mathbf{z}))=
\bigoplus_{w\in W} \mathfrak{M}_{\chi_{w\mathbf{z}}\text{-isotypic}},\]
where $\mathfrak{M}_{\chi_{w\mathbf{z}}\text{-isotypic}}$ is the
$\chi_{w\mathbf{z}}$ isotypic part. On both sides of this
equation, we have $|W|$ terms, each a $|W|$-dimensional
vector space. However on the left-side, the summands are
\textit{not} $\widetilde{\mathcal{H}}_{q^{-1}}$ invariant.
On the right side, they are. This decomposition is essentially
the same as the decomposition (\ref{mjh1decomp}). Indeed, it
was shown by Casselman~\cite{CasselmanSpherical}, Proposition~2.4,
that the projection of the Iwahori fixed vectors onto the
Jacquet module is an isomorphism, making it possible to compare
the two decompositions.

To give another example involving coinvariants, 
let $U$ be the maximal unipotent subgroup as before,
but now let $\psi$ be a
nondegenerate character of $U$. Then with $V = M (\mathbf{z})$, the
twisted Jacquet module $V_{U, \psi}$ is the module of coinvariants for the
Whittaker model, which is one-dimensional. Thus we obtain another
$|W|$-dimensional module. We may allow $\mathbf{z}$ to vary in
(\ref{whitcoimod}), and regard this $|W|$-dimensional vector space, depending
on a parameter $\mathbf{z}$, as a vector bundle over $\widehat{T}
(\mathbb{C})$. This comes with an action of the Weyl group, and we can ask for
the equivariant sections. This is another module which may be identified with
the space $\mathcal{O} (\widehat{T}(\mathbb{C}))$ of regular functions on
$\widehat{T}(\mathbb{C})$.  In this representation the $T_i$ act by
Demazure-like operators.

Of note, we can give an interpretation of the formula of
Casselman and Shalika~\cite{CasselmanShalika} in this context that is different from the usual
one. That is, we may take the spherical idempotent in the Hecke algebra and
apply it in the module $\mathcal{O} (\widehat{T} (\mathbb{C}))$. The
Casselman-Shalika formula emerges (see Theorem~\ref{abstractcs}). This point of view may
be used to give a new approach to
the investigation of {\cite{BrubakerBumpFriedbergMatrix}} relating functionals
to Hecke algebra representations.

Below, we will generalize this by replacing $G (F)$ by any
$n$-fold metaplectic cover $\widetilde{G}$ of $G (F)$ provided that $F$ contains a copy of the
$n$-th roots of unity. As we will now explain, the Hecke modules we obtain
in these metaplectic cases are for a different Hecke algebra. To explain this
point we must discuss the \textit{metaplectic L-group}, a generalization of
the usual (connected) Langlands L-group which arises in the
case $n=1$. The metaplectic L-group plays a role in understanding metaplectic
correspondences such as the Shimura correspondence as functorial
lifts. (See {\cite{BumpFriedbergGinzburgLifting, McNamaraPrincipal,Weissman, GanGao}.)
For us, it will give a convenient language for discussing
Hecke algebra representations that also occur in the context of quantum groups.

Motivated by the Shimura correspondence, Savin~\cite{SavinShimura} computed
the genuine Iwahori Hecke algebra of $\widetilde{G}$. He showed that this is
the same as the Hecke algebra of the $F$-points of another reductive group
${G}'$ (not a metaplectic cover).  For example, suppose that $G$ is
``doubly-laced,'' i.e.\ its Dynkin diagram contains a double bond. Then if $n$
is even, the long and short roots of $G$ and $G'$ are reversed, while if $n$
is odd, they are not. We will observe a similar phenomenon for the Hecke
algebra representations that we construct.

Now $G'$ itself has an L-group, whose connected component of the identity we denote $\widehat{G}'(\C)$.
The group $\widehat{G}'(\C)$ can be regarded as the connected L-group of the metaplectic
group $\widetilde{G}$, and the intermediary $G'$ may be dispensed with. This
\text{metaplectic L-group} $\widehat{G}'(\C)$ is mentioned (without giving a
formal definition) in Bump, Friedberg and
Ginzburg~\cite{BumpFriedbergGinzburgLifting}, where it was used heuristically
to predict metaplectic examples of functoriality.  A formal definition requires describing
the root datum for the reductive group $\widehat{G}'(\C)$, and this was done
by McNamara~\cite{McNamaraPrincipal} and Weissman~\cite{Weissman}. (See
\cite{GanGao} for a further useful account extending~\cite{Weissman}.) Weissman's
more sophisticated treatment is more general since he requires fewer roots of
unity in the ground field, and he also constructs the Galois part of the
L-group, not just the connected component. In this paper, McNamara's
description is sufficient for us, and we will use \cite{McNamaraPrincipal} as
our basic reference for the metaplectic L-group.

Returning to the Hecke algebra, let $G$ be a split reductive group over a $p$-adic
field, and $\widehat G(\C)$ be its connected Langlands dual group.  In view of the
historical origins of Hecke algebras as convolution rings of functions on
$G(F)$, we are accustomed to think of the Hecke algebras as associated with
$G(F)$, but it may be better to think of them as associated with
$\widehat{G}(\C)$. For example the spherical Hecke algebra is interpreted in
the Satake isomorphism as a ring of functions on $\widehat{G}(\C)$, and this
interpretation extends to the Iwahori Hecke algebra where (via the Bernstein
presentation) it is the weight lattice of $\widehat{G}(\C)$, not $G$, that
controls the structure of the Hecke algebra.  So we regard the Hecke algebra
as associated with $\widehat{G}(\C)$.

With this in mind, the representations that we construct from metaplectic
groups are representations of the affine Hecke algebra associated with the
metaplectic dual group $\widehat{G}'(\C)$, not with $\widehat{G}(\C)$. (See
Example~\ref{metalmtheory}.)  This is similar to the phenomenon for the
Iwahori Hecke algebra in \cite{SavinShimura} mentioned above.

So far we have discussed Hecke modules arising from Whittaker models on
$p$-adic groups or their covers as a source of examples of the axiom schema producing
Hecke modules. Alternatively, we will produce examples of the representation schema by
taking $\mathcal{M} (\mathbf{z})$ to be modules of quantum groups. For example
let us take $\widehat{T} (\mathbb{C})$ to be $(\mathbb{C}^{\times})^r$. If $z
\in \mathbb{C}^{\times}$ then the affine quantum group $U_{\sqrt{v}}
(\widehat{\mathfrak{gl}} (n))$ has an $n$-dimensional {\textit{evaluation
module}} $V_z$ indexed by $z$.\footnote{In {\cite{BrubakerBuciumasBump}} we
wrote of $U_v (\widehat{\mathfrak{gl}} (n))$ instead of $U_{\sqrt{v}}
(\widehat{\mathfrak{gl}} (n))$, but this change is just a difference of
conventions.} With $\mathbf{z}= \operatorname{diag} (z_1,
\cdots, z_r) \in \widehat{T} (\mathbb{C})$, then we may take $\mathcal{M}
(\mathbf{z})$ to be $V_{z_1} \otimes \cdots \otimes V_{z_r}$. The operators
$\Aa_{s_i} : \mathcal{M} (\mathbf{z}) \longrightarrow \mathcal{M} (s_i \mathbf{z})$ then
come from the {\textit{$R$-matrices}} that are intertwining operators $V_{z_i}
\otimes V_{z_{i + 1}} \longrightarrow V_{z_{i + 1}} \otimes V_{z_i}$,
reflecting the fact that $U_{\sqrt{v}} (\widehat{\mathfrak{gl}} (n))$-modules
form (almost) a braided category.\footnote{We say \textit{almost} a braided category
since the axioms of Joyal and Street~\cite{JoyalStreet} are almost satisfied. The exception
is that in rare cases the commutativity constraint $U\otimes V\to
V\otimes U$ may fail to be an isomorphism.}
Again, we obtain an $\tilde{\mathcal{H}}_v$-module.

The module $\mathfrak{M}(\mathbf{z})=\bigoplus_{w \in W}\mathcal{M} (w\mathbf{z})$ is isomorphic as a vector space to a direct sum of $r!$ copies of $\otimes^r \mathbb{C}^n$.
Because it is built from $U_{\sqrt{v}}({\widehat{\mathfrak{gl}}}(n))$-modules
using intertwining operators, we have commuting actions of
$\widetilde{\mathcal{H}}_v$ and $U_{\sqrt{v}}({\widehat{\mathfrak{gl}}}(n))$.
A related though simpler situation was found by Jimbo~{\cite{JimboHecke}} as a deformation of
Schur-Weyl duality. This gives commuting actions of $\mathcal{H}_v$ and $U_{\sqrt{v}}
(\mathfrak{gl} (n))$ (not affine) on one copy of $\otimes^r \mathbb{C}^n$.
We will explain a relationship between the representation of
$\widetilde{\mathcal{H}}_v \otimes U_{\sqrt{v}} (\widehat{\mathfrak{gl}} (n))$ that we
construct, and the representation of $\mathcal{H}_v \otimes U_{\sqrt{v}} (\mathfrak{gl}
(n))$ of Schur-Weyl-Jimbo duality.

Another representation of the Hecke algebra that can be
derived from the representation schema may be found in
Ginzburg, Reshetikhin and Vasserot~\cite{GinzburgReshetikhinVasserot}
and Kashiwara, Miwa and Stern~\cite{KashiwaraMiwaStern}.
This commutes with a Fock space representation of
$U_{\sqrt{v}}(\widehat{\mathfrak{gl}}(n))$ that is related
to the partition function interpretation of metaplectic
Whittaker functions in~\cite{BrubakerBuciumasBump}. We hope
to return to this point in a later paper.

We thus have two seemingly distinct sources of Hecke modules, one coming
from the representation theory of metaplectic covers of $p$-adic groups,
the other coming from modules of $U_{\sqrt{v}}(\widehat{\mathfrak{gl}} (n))$.
The key point for us is that these different modules are related. To accomplish
this, we must modify the quantum group by \textit{Drinfeld twisting}.
This is a modification of the Hopf algebra that changes both the
comultiplication and the $R$-matrices, but still allows one to define a Hecke
algebra representation with the properties mentioned above. The idea is to
introduce Gauss sums into the $R$-matrix to make it match the complicated
scattering matrix of the intertwining
operators on the Whittaker coinvariants for the metaplectic unramified
principal series representations for the $n$-fold metaplectic cover of $\GL(r)$,
which was computed by Kazhdan and Patterson~\cite{KazhdanPatterson}.
Thus, using Theorem~1 in {\cite{BrubakerBuciumasBump}} we will prove that
the metaplectic $V_{U, \psi}$ produces the same $\tilde{\mathcal{H}}_v$-module
as a twist of the $U_{\sqrt{v}} (\widehat{\mathfrak{gl}} (n))$ example. This
at last is the connection between Hecke modules coming from intertwining
operators of representations of $p$-adic groups and others coming from
$R$-matrices of quantum groups. We emphasize that $n$ is the degree of the
cover, \textit{not} the rank of the corresponding Lie group.

The last section of the paper applies our schema formalism to covers of any
split, reductive group. There we construct modules for the Hecke algebra of
the corresponding metaplectic dual group $\widehat{G}'(\C)$ as described above.
We use it to give a uniform approach to
a large class of (not necessarily unique) models. In an extended example at
the end of the section, we discuss the metaplectic Whittaker functional and
show how metaplectic Demazure-Whittaker operators (explored previously in
\cite{ChintaGunnellsPuskas,PuskasWhittaker,PatnaikPuskas})
arise naturally from our framework.

This paper is organized as follows.  Section~\ref{schemasect} introduces the representation
schema whose structure unifies these different objects.  We show that a small set of axioms
leads to a natural representation of the affine Hecke algebra (Theorem~\ref{hecketheoremaffine}).
In Section~\ref{whitheck} the schema is developed in the case of Whittaker models for $p$-adic groups,
and the use of the Hecke algebra is illustrated by a proof of the Casselman-Shalika formula as described above.
Section~\ref{HeckeRM} explains how one may use the schema in order
to create Hecke modules out of $R$-matrices.  By taking a certain limit, we also
obtain a module of the finite Hecke algebra which is a wreathed version of the
module used in Schur-Weyl-Jimbo duality; see Theorem~\ref{wreaththm}.
Section~\ref{Drinfeldtwist} reviews some facts about Drinfeld twisting for
quantum groups and $R$-matrices and explains how the construction in 
Section~\ref{HeckeRM} works if we do a Drinfeld twist on the quantum group. 
In Section~\ref{whitandr} we treat the
case of covering groups, obtaining modules for the affine Hecke algebra of the
metaplectic L-group from both representations of metaplectic groups and (in
the case of covers of $\GL(r)$) from $R$-matrices of quantum groups.
Then, relying on \cite{BrubakerBuciumasBump}, we show how these
representations coming from different sources -- metaplectic groups on
the one hand and quantum groups on the other -- can be identified (Remark~5.6).  The last section,
Section~\ref{modelsohboy}, applies our schema in the context of models for
representations of metaplectic covers of $p$-adic groups.  The goal is to
compute the action of the associated space of functionals on translates of the
spherical vector.  For a cover of a general split reductive $p$-adic group we
show how this is related to the schema of Section~\ref{schemasect} and how the
schema may be used to calculate the desired action
(Proposition~\ref{calc-CS}).  Then we show in Theorem~\ref{met-dz} that the
metaplectic Demazure operator of \cite{ChintaGunnellsPuskas} is obtained from
the affine Hecke action.

\medbreak\textit{Acknowledgments}.
\thanks{This work was supported by NSF grants DMS-1406238 (Brubaker), 
  DMS-1601026 (Bump), and DMS-1500977 (Friedberg) and by the 
Max Planck Institute for Mathematics (Buciumas).
We would like to thank Sergey Lysenko and Anna Pusk\'as for useful conversations,
and the referee for helpful comments.}

\section{A Representation Schema for the Hecke Algebra\label{schemasect}}

We will define representations of $\widetilde{\mathcal{H}}_v$ axiomatically.
There are several instances of the axioms, coming from either quantum
groups or from representations of $p$-adic groups, which we will catalog in
future sections.

Let $\Lambda$ be a rank $r$ weight lattice, which we identify with the
group of rational characters of a complex torus $\widehat{T}$.
Thus $\Lambda=X^\ast(\widehat{T})$. We require inside of $\Lambda$
a root system, which we will denote $\Phi^\vee$. As usual, we partition
this into positive and negative roots, and then the Weyl group $W$ is
generated by simple reflections $s_i$ which correspond to simple roots
$\alpha_i^\vee$.

Define $\Treg$ to be the regular subset of $\widehat{T}$. Thus, $\Treg(\mathbb{C})$
is the open set of all $\mathbf{z} \in \widehat{T}(\mathbb{C})$ such
that $\mathbf{z}^{\alpha^\vee}\neq 1$ for all $\alpha^\vee \in
\Phi^{\vee}$. Note that if $\mathbf{z}\in\Treg(\C)$ then $w\mathbf{z}\in\Treg(\C)$ for each
$w\in W$. For each such $\mathbf{z} \in \Treg(\C)$, we assume there to be given a complex 
vector space $\mathcal{M}(\mathbf{z})$ with some additional data that will
give rise to a representation of $\widetilde{\mathcal{H}}_v$ on
\[ \mathfrak{M} (\mathbf{z}) := \bigoplus_{w \in W} \mathcal{M} (w\mathbf{z}) . \]

The following assumptions are in force for Theorems~\ref{hecketheorem}
and~\ref{hecketheoremaffine} below.

\begin{assumptions} \label{assofuandme}
For each simple reflection $s_i$ in $W$, we assume there exists a linear map
$\Aa_{s_i}^{\mathbf{z}} : \mathcal{M} (\mathbf{z}) \longrightarrow \mathcal{M} (s_i
\mathbf{z})$ such that
$\Aa^{s_i \mathbf{z}}_{s_i} \circ \Aa_{s_i}^{\mathbf{z}}$ is a scalar linear
transformation of $\mathcal{M}(\mathbf{z})$. More precisely, we require
$\Aa^{s_i \mathbf{z}}_{s_i} \circ \Aa_{s_i}^{\mathbf{z}}$
to act as the scalar
\begin{equation}
  \label{doublesch}
  \left( \frac{1 - v\mathbf{z}^{\alpha_i^\vee}}{1 -\mathbf{z}^{\alpha_i^\vee}}
   \right) \left( \frac{1 - v\mathbf{z}^{- \alpha_i^\vee}}{1 -\mathbf{z}^{- \alpha_i^\vee}}
   \right) .
\end{equation}
Furthermore, we assume that
\begin{equation}
  \label{abraidc}
  \cdots \Aa_{s_i}^{s_j \mathbf{z}} \Aa_{s_j}^{\mathbf{z}} = \cdots \Aa_{s_j}^{s_i
  \mathbf{z}} \Aa_{s_i}^{\mathbf{z}}
\end{equation}
where the number of terms on each side is the order of $s_is_j$.
Thus if $s_i$ and $s_j$ commute, this means
\begin{equation}
  \label{abraidb}
\Aa_{s_i}^{s_j \mathbf{z}} \Aa_{s_j}^{\mathbf{z}} = \Aa_{s_j}^{s_i
  \mathbf{z}} \Aa_{s_i}^{\mathbf{z}}\,,
\end{equation}
while if $s_i s_j$ has order 3, then
\begin{equation}
  \label{abraida}
  \Aa_{s_i}^{s_j s_i \mathbf{z}} \Aa_{s_j}^{s_i \mathbf{z}}
  \Aa_{s_i}^{\mathbf{z}} = \Aa_{s_j}^{s_i s_j \mathbf{z}} \Aa_{s_i}^{s_j
  \mathbf{z}} \Aa_{s_j}^{\mathbf{z}},
\end{equation}
and so forth.
\end{assumptions}

By Matsumoto's theorem, (\ref{abraidb}) and (\ref{abraida}) imply that we may
define $\Aa_w^{\mathbf{z}} : \mathcal{M} (\mathbf{z}) \longrightarrow \mathcal{M} (w\mathbf{z})$ for
$w \in W$ such that $\Aa_{w w'}^{\mathbf{z}} = \Aa_w^{w' \mathbf{z}}
\Aa_{w'}^{\mathbf{z}}$ whenever $\ell (w w') = \ell (w) + \ell (w')$.
Here $\ell$ is the length function on $W$.

\bigbreak

We define an action of the Hecke algebra generators $\mathfrak{T}_i$ on $\mathfrak{M} (\mathbf{z})$ as
follows. Let $\phi = (\phi_{w\mathbf{z}}) \in \mathfrak{M} (\mathbf{z})$
where $\phi_{w\mathbf{z}} \in \mathcal{M} (w\mathbf{z})$ for each $w \in W$. Then we define

\begin{equation}
 \label{tiaction} 
  \mathfrak{T}_i \, : \, (\phi_{w \mathbf{z}}) \longmapsto (\psi_{w\mathbf{z}}) 
  \quad \text{where} \quad\psi_{w\mathbf{z}} =
  \left( \frac{(1 - v) (w\mathbf{z})^{
   \alpha_i^{\vee}}}{1 - (w\mathbf{z})^{\alpha_i^{^{\vee}}}} \right)
   \phi_{w\mathbf{z}} + \Aa_{s_i}^{s_i w\mathbf{z}} \phi_{s_i w\mathbf{z}}\,. 
\end{equation}

\begin{theorem}
  \label{hecketheorem}
  The operators $\mathfrak{T}_i$ satisfy the quadratic relation
  \begin{equation}
    \label{quadratic} \mathfrak{T}_i^2 = (v - 1) \mathfrak{T}_i + v
  \end{equation}
  and the braid relations
  \begin{equation}
    \label{firstbraid} \mathfrak{T}_i \mathfrak{T}_j \cdots
    =\mathfrak{T}_j \mathfrak{T}_i \cdots
  \end{equation}
  where the number of factors on both sides is the order of $s_is_j$.
  Thus (\ref{tiaction}) extends to an action of the finite Hecke algebra 
  $\mathcal{H}_v$ on $\mathfrak{M} (\mathbf{z})$.
\end{theorem}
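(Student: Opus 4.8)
The plan is to verify the quadratic relation and the braid relations directly from the formula \eqref{tiaction}, using Assumptions~\ref{assofuandme}. For the quadratic relation, I would fix $w\in W$ and compute the $w\mathbf{z}$-component of $\mathfrak{T}_i^2\phi$. Writing $c_w = \frac{(1-v)(w\mathbf{z})^{\alpha_i^\vee}}{1-(w\mathbf{z})^{\alpha_i^\vee}}$ for the scalar multiplier and noting that $s_iw\mathbf{z}$ has multiplier $c_w' = \frac{(1-v)(w\mathbf{z})^{-\alpha_i^\vee}}{1-(w\mathbf{z})^{-\alpha_i^\vee}}$ (since $(s_iw\mathbf{z})^{\alpha_i^\vee} = (w\mathbf{z})^{-\alpha_i^\vee}$), applying $\mathfrak{T}_i$ twice produces four terms: $c_w^2\phi_{w\mathbf{z}}$, a cross term $c_w\,\Aa_{s_i}^{s_iw\mathbf{z}}\phi_{s_iw\mathbf{z}}$, another cross term $\Aa_{s_i}^{s_iw\mathbf{z}}(c_w'\phi_{s_iw\mathbf{z}})$, and the term $\Aa_{s_i}^{s_iw\mathbf{z}}\Aa_{s_i}^{w\mathbf{z}}\phi_{w\mathbf{z}}$. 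By the scalar condition \eqref{doublesch}, the last term is $\left(\frac{1-v(w\mathbf{z})^{\alpha_i^\vee}}{1-(w\mathbf{z})^{\alpha_i^\vee}}\right)\left(\frac{1-v(w\mathbf{z})^{-\alpha_i^\vee}}{1-(w\mathbf{z})^{-\alpha_i^\vee}}\right)\phi_{w\mathbf{z}}$. So the coefficient of $\phi_{w\mathbf{z}}$ is $c_w^2$ plus that product, and one checks by a short rational-function identity in $x=(w\mathbf{z})^{\alpha_i^\vee}$ that this equals $(v-1)c_w + v$. The $\Aa_{s_i}^{s_iw\mathbf{z}}\phi_{s_iw\mathbf{z}}$ terms combine with coefficient $c_w + c_w'$, and one checks that $c_w + c_w' = v-1$, matching the coefficient of $\Aa_{s_i}^{s_iw\mathbf{z}}\phi_{s_iw\mathbf{z}}$ in $(v-1)\mathfrak{T}_i\phi$. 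This gives \eqref{quadratic}.

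For the braid relations, the natural strategy is to recognize the operator $\mathfrak{T}_i$ as a sum of an "intertwiner-type" operator and a scalar-type operator, and to relate it to the known Demazure–Lusztig operators. Concretely, I would set $\mathfrak{D}_i\phi$ to have $w\mathbf{z}$-component $\Aa_{s_i}^{s_iw\mathbf{z}}\phi_{s_iw\mathbf{z}}$, so that $\mathfrak{T}_i = M_i + \mathfrak{D}_i$ where $M_i$ is multiplication by the scalar $c_w$ on the $w\mathbf{z}$-summand. The operator $\mathfrak{D}_i$ satisfies the braid relations \emph{on the nose} because of \eqref{abraidb} and \eqref{abraida}: composing $\mathfrak{D}_i\mathfrak{D}_j$ just composes the $\Aa$'s along a reduced word, and Matsumoto's theorem (invoked in the excerpt) guarantees that the composite $\Aa_w^{\mathbf{z}}$ is well-defined, independent of the reduced word. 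The remaining work is to show the full operator $\mathfrak{T}_i$ satisfies the braid relation; I expect the cleanest route is to verify that $\mathfrak{T}_i$ agrees with the Bernstein-presentation action of $T_i$ on a model where the braid relations are already known — or, more self-containedly, to check the rank-two relations ($A_1\times A_1$, $A_2$, $B_2$, $G_2$) directly by the same kind of rational-function bookkeeping as above, reducing to identities among the scalars $(w\mathbf{z})^{\pm\alpha^\vee}$.

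The main obstacle will be the $B_2$ and $G_2$ braid relations for the mixed operator $\mathfrak{T}_i = M_i + \mathfrak{D}_i$: expanding both sides produces a sum of terms indexed by subsets of the reduced word, each a product of some $\Aa$'s interspersed with scalar multipliers evaluated at intermediate Weyl translates of $\mathbf{z}$, and one must match these term-by-term. The $\mathfrak{D}$-only terms match by \eqref{abraidc}; the purely scalar terms reduce to the classical fact that the Demazure–Lusztig operators satisfy the braid relations (equivalently, the commuting/length-additivity of $W$ acting on $\C(\widehat{T})$); the genuinely new check is for the mixed terms, where one uses that $\Aa_{s_i}^{s_i\mathbf{z}}$ "conjugates" the scalar $(\mathbf{z})^{\alpha_j^\vee}$ to $(s_i\mathbf{z})^{\alpha_j^\vee}$ in exactly the way demanded by the Bernstein relation \eqref{bernstein}. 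Organizing this so that one does not have to separately grind through all four rank-two cases — ideally by deriving the Bernstein relation for the $\theta_\lambda$-action first and then citing the uniqueness of the Bernstein presentation — is the key structural decision. I would aim for the latter: define the $\theta_\lambda$-action in the next theorem, show $\mathfrak{T}_i$ and $\theta_\lambda$ satisfy \eqref{bernstein}, and deduce the braid relations as a formal consequence, pushing the real computation into the (single) Bernstein identity rather than four braid identities.
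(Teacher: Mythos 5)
Your verification of the quadratic relation is correct and is, in substance, the paper's own argument: the paper packages the same four terms into a $2\times 2$ block matrix on the invariant subspace $\mathcal{M}(w\mathbf{z})\oplus\mathcal{M}(s_iw\mathbf{z})$, uses $D_i(w\mathbf{z})+D_i(s_iw\mathbf{z})=v-1$ for the off-diagonal coefficient, and uses the scalar value (\ref{doublesch}) of $\Aa_{s_i}^{s_iw\mathbf{z}}\Aa_{s_i}^{w\mathbf{z}}$ for the diagonal. Your $A_2$ computation and your fallback plan of grinding through the rank-two cases directly are also exactly what the paper does (by hand for $A_2$, by SAGE for $C_2$ and $G_2$).

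The route you say you would actually take, however -- verify the Bernstein relation (\ref{bernstein}) and then ``deduce the braid relations as a formal consequence'' -- has a genuine gap. The Bernstein relation for a single index $i$ constrains only the \emph{diagonal} blocks of $\mathfrak{T}_i$ on $\mathcal{M}(w\mathbf{z})\oplus\mathcal{M}(s_iw\mathbf{z})$: it forces the multipliers to be $D_i(w\mathbf{z})$ and $D_i(s_iw\mathbf{z})$ but places no condition at all on the off-diagonal maps $\Aa_{s_i}$, and hence none on the compatibility (\ref{abraidc}) between $\Aa_{s_i}$ and $\Aa_{s_j}$ for $i\neq j$. (This is precisely the content of the Remark following Theorem~\ref{hecketheoremaffine}: Bernstein determines the diagonal, the quadratic relation forces (\ref{doublesch}), and the braid relation independently forces (\ref{abraidc}).) If the braid relation followed formally from Bernstein plus the quadratic relation and the commutativity of $\theta_\Lambda$, the hypothesis (\ref{abraidc}) would be superfluous, which it is not: replacing one $\Aa_{s_i}$ by an arbitrary scalar multiple of itself on a single summand preserves both the Bernstein relation and (\ref{doublesch}), yet destroys the rank-two braid relation. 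Relatedly, an identity such as $\mathfrak{T}_1\mathfrak{T}_2\mathfrak{T}_1\theta_\lambda=\theta_{w_0\lambda}\mathfrak{T}_1\mathfrak{T}_2\mathfrak{T}_1+(\cdots)$ follows from Bernstein and holds equally for both sides of the braid identity, so the commutation with $\theta_\Lambda$ cannot distinguish $\mathfrak{T}_1\mathfrak{T}_2\mathfrak{T}_1$ from $\mathfrak{T}_2\mathfrak{T}_1\mathfrak{T}_2$. So the braid relations must be verified directly; your ``or, more self-containedly'' option is the one that actually works, and is the paper's proof.
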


\begin{proof}
  To check the quadratic relation, we note that the subspace $\mathcal{M}
  (w\mathbf{z}) \oplus \mathcal{M} (s_i w\mathbf{z})$ is invariant under
  $\mathfrak{T}_i$, and on this space $\mathfrak{T}_i + 1$ is represented by
  the block matrix
  \begin{equation}
    \label{tblockmat} \left(\begin{array}{cc}
      D_i  (w\mathbf{z}) & \Aa_{s_i}^{s_i w\mathbf{z}}\\
      \Aa_{s_i}^{w\mathbf{z}} & D_i  (s_i w\mathbf{z})
    \end{array}\right)
  \end{equation}
  where
  \[ D_i (w\mathbf{z}) = \left( \frac{(1 - v) (w\mathbf{z})^{\alpha_i^{\vee}}}
     {1 - (w\mathbf{z})^{\alpha_i^{^{\vee}}}} \right) =
     \frac{(v - 1)}{1 - (w\mathbf{z})^{- \alpha_i^{\vee}}} \,\,. \]
  We note that with $x = (w\mathbf{z})^{\alpha_i^{\vee}}$ we have
  \[ D_i (w\mathbf{z}) = \frac{(1 - v) x}{1 - x} \quad \text{and} \quad D_i (s_i
     w\mathbf{z}) = \frac{(1 - v) x^{- 1}}{1 - x^{- 1}} = \left( \frac{1 -
     v}{x - 1} \right) . \]
  Thus
  \begin{equation}
    \label{direl} D_i (w\mathbf{z}) + D_i (s_i w\mathbf{z}) = v - 1 .
  \end{equation}
  Using (\ref{direl}) and the fact that both $\Aa_{s_i}^{s_i w\mathbf{z}}
  \Aa_{s_i}^{w\mathbf{z}}$ and $\Aa_{s_i}^{w\mathbf{z}} \Aa_{s_i}^{s_i
  w\mathbf{z}}$ are represented by the same scalar ${(1 - v x) (1 - v
  x^{- 1})}{(1 - x)^{-1} (1 - x^{- 1})^{-1}}$, we see that on $\mathcal{M} (w\mathbf{z}) \oplus
  \mathcal{M} (s_i w\mathbf{z})$ the operator $\mathfrak{T}_i^2$ is
  represented by the matrix
  \[ \left(\begin{array}{cc}
       D_i  (w\mathbf{z}) & \Aa_{s_i}^{s_i w\mathbf{z}}\\
       \Aa_{s_i}^{w\mathbf{z}} & D_i  (s_i w\mathbf{z})
     \end{array}\right)^2 = (v - 1) \left(\begin{array}{cc}
       D_i  (w\mathbf{z}) & \Aa_{s_i}^{s_i w\mathbf{z}}\\
       \Aa_{s_i}^{w\mathbf{z}} & D_i  (s_i w\mathbf{z})
     \end{array}\right) + v \left(\begin{array}{cc}
       1_{\mathcal{M} (w\mathbf{z})} & \\
       & 1_{\mathcal{M} (s_i w\mathbf{z})}
     \end{array}\right) . \]
  This confirms the quadratic relation on the invariant subspace 
  $\mathcal{M}(w\mathbf{z}) \oplus \mathcal{M} (s_i w\mathbf{z})$. Since $\mathfrak{M}
  (\mathbf{z})$ is a direct sum of such spaces, this proves
  (\ref{quadratic}).
  
  We turn to the braid relation (\ref{firstbraid}).  There are a number of cases.  Suppose for example that
  the order of $s_i s_j$ is three. Apply $\mathfrak{T}_i$ to a vector $\phi_{\mathbf{z}} \in \mathcal{M}
  (\mathbf{z})$ which we regard as a direct summand of $\mathfrak{M}
  (\mathbf{z})$. This produces
  \[ \frac{(1 - v) x}{(1 - x)} \phi_{\mathbf{z}} + \Aa_{s_i}^{\mathbf{z}}
     \phi_{\mathbf{z}} \in \mathcal{M} (\mathbf{z}) \oplus \mathcal{M} (s_i \mathbf{z}),
     \qquad x =\mathbf{z}^{\alpha_i^{\vee}} . \]
  Now applying $\mathfrak{T}_j$ gives
  \[ \frac{(1 - v) y}{(1 - y)} \frac{(1 - v) x}{(1 - x)} \phi_{\mathbf{z}} +
     \frac{(1 - v) x y}{(1 - x y)} \Aa_{s_i}^{\mathbf{z}} \phi_{\mathbf{z}}
     + \frac{(1 - v) x}{(1 - x)} \Aa_{s_j}^{\mathbf{z}} \phi_{\mathbf{z}} +
     \Aa_{s_j}^{s_i \mathbf{z}} \Aa_{s_i}^{\mathbf{z}} \phi_{\mathbf{z}}, \]
  where $y =\mathbf{z}^{\alpha_j^\vee}$. Finally one more application of
  $\mathfrak{T}_i$ gives a sum of various terms which we examine individually.
  The term in $\mathcal{M} (\mathbf{z})$ equals $\phi_{\mathbf{z}}$ times the
  following constant
  \[ (1 - v)^3 \left( \frac{1}{x - 1} \right)^2 \frac{1}{y - 1} + \frac{(1 -
     v)}{(x y - 1)} \frac{(1 - v x) (1 - v / x)}{(1 - x) (1 - 1 / x)} . \]
  This is unchanged if we interchange $x$ and $y$, which means that the 
  $\mathcal{M}(\mathbf{z})$ components of $\mathfrak{T}_i \mathfrak{T}_j \mathfrak{T}_i
  \phi_{\mathbf{z}}$ and $\mathfrak{T}_j \mathfrak{T}_i \mathfrak{T}_j
  \phi_{\mathbf{z}}$ are equal. The same is true for the components in $\mathcal{M}
  (s_i \mathbf{z})$, $\mathcal{M} (s_j \mathbf{z})$, $\mathcal{M} (s_i s_j
  \mathbf{z})$, $\mathcal{M}(s_j s_i \mathbf{z})$ and $\mathcal{M} (s_i s_j
  s_i \mathbf{z})$. We leave these for the reader to check, except to note
  that the components in $\mathcal{M} (s_i s_j s_i \mathbf{z})$ are equal by
  (\ref{abraida}). This shows that $\mathfrak{T}_i \mathfrak{T}_j
  \mathfrak{T}_i$ and $\mathfrak{T}_j \mathfrak{T}_i \mathfrak{T}_j$ have the
  same effect on $\phi_{\mathbf{z}} \in\mathcal{M} (\mathbf{z})$. The same argument
  works for $\phi_{w\mathbf{z}} \in\mathcal{M} (w\mathbf{z})$, proving the
  relation~(\ref{firstbraid}) when the order of $s_i s_j$ is three.

  We will leave the case where the order of $s_i s_j$ is two to the
  reader. There remain the cases where the order of $s_i s_j$ is
  four or six. These reduce to the rank two case where the root
  system is of type $C_2$ or $G_2$. It is most convenient to use computer
  algebra for these cases. In Appendix \ref{appendix}, we give a
  \sage\ program to check this.
\end{proof}

As a consequence we have a representation of $\mathcal{H}_v$ on
$\mathfrak{M} (\mathbf{z})$ in which $T_i$ acts through the operator
$\mathfrak{T}_i$. We next show that we may extend this representation
to the affine Hecke algebra.

\begin{theorem}
  \label{hecketheoremaffine}
  The representation of $\mathcal{H}_v$ in Theorem~\ref{hecketheorem}
  extends to a representation of $\widetilde{\mathcal{H}}_v$ in which
  \begin{equation}
    \label{thetaact}
    \theta_{\lambda} (\phi_{w\mathbf{z}}) = (w\mathbf{z})^{\lambda}
    \phi_{w\mathbf{z}} .
    \end{equation}
\end{theorem}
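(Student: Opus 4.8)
The plan is to verify directly that the diagonal operators $\theta_\lambda$ prescribed by (\ref{thetaact}), together with the operators $\mathfrak{T}_i$ of Theorem~\ref{hecketheorem}, satisfy all the defining relations of $\widetilde{\mathcal{H}}_v$ in the Bernstein presentation. Since $\widetilde{\mathcal{H}}_v$ is generated by the $T_i$ and the commutative subalgebra $\theta_\Lambda\cong\C[\Lambda]$ subject only to the quadratic and braid relations (already supplied by Theorem~\ref{hecketheorem}), the relations internal to $\theta_\Lambda$, and the Bernstein relation (\ref{bernstein}), there are really only two things to establish: that $\lambda\mapsto\theta_\lambda$ is a well-defined action of $\theta_\Lambda$ on $\mathfrak{M}(\mathbf{z})$, and that (\ref{bernstein}) holds. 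The first is immediate: on the summand $\mathcal{M}(w\mathbf{z})$ the operator $\theta_\lambda$ is multiplication by $(w\mathbf{z})^\lambda$, and $\lambda\mapsto(w\mathbf{z})^\lambda$ is a homomorphism $\Lambda\to\C^\times$, so $\theta_\lambda\theta_\mu=\theta_{\lambda+\mu}$ and $\theta_0=1$ on all of $\mathfrak{M}(\mathbf{z})$.

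For the Bernstein relation, the plan is to fix a simple reflection $s_i$ and, exactly as in the proof of Theorem~\ref{hecketheorem}, decompose $\mathfrak{M}(\mathbf{z})$ into the $\mathfrak{T}_i$-invariant two-block summands $\mathcal{M}(w\mathbf{z})\oplus\mathcal{M}(s_iw\mathbf{z})$; each $\theta_\lambda$ visibly preserves these as well, so it suffices to check (\ref{bernstein}) on one such summand. Put $x=(w\mathbf{z})^{\alpha_i^\vee}$. Using $(s_iw\mathbf{z})^\mu=(w\mathbf{z})^{s_i\mu}$ and $s_i^2=1$, on this summand $\theta_\lambda$ acts by $\operatorname{diag}\bigl((w\mathbf{z})^\lambda,(w\mathbf{z})^{s_i\lambda}\bigr)$, $\theta_{s_i\lambda}$ acts by $\operatorname{diag}\bigl((w\mathbf{z})^{s_i\lambda},(w\mathbf{z})^{\lambda}\bigr)$, and $\mathfrak{T}_i$ is the $2\times2$ block matrix with diagonal entries $D_i(w\mathbf{z}),D_i(s_iw\mathbf{z})$ and off-diagonal entries the $\Aa$-maps. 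A short computation then shows that in $\theta_\lambda\mathfrak{T}_i$ and $\mathfrak{T}_i\theta_{s_i\lambda}$ the off-diagonal entries agree, since the relevant scalar — $(w\mathbf{z})^\lambda$ for the $(1,2)$-entry and $(w\mathbf{z})^{s_i\lambda}$ for the $(2,1)$-entry — multiplies the $\Aa$-map the same way from either side; hence $\theta_\lambda\mathfrak{T}_i-\mathfrak{T}_i\theta_{s_i\lambda}$ is the diagonal matrix with entries $\bigl((w\mathbf{z})^\lambda-(w\mathbf{z})^{s_i\lambda}\bigr)D_i(w\mathbf{z})$ and $\bigl((w\mathbf{z})^{s_i\lambda}-(w\mathbf{z})^\lambda\bigr)D_i(s_iw\mathbf{z})$.

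On the other side, since $\theta_{-\alpha_i^\vee}$ acts by $x^{-1}$ on $\mathcal{M}(w\mathbf{z})$ and by $x$ on $\mathcal{M}(s_iw\mathbf{z})$, and since $x\neq 1$ because $w\mathbf{z}\in\Treg(\C)$, one applies the evaluation homomorphism $\theta_\mu\mapsto(w\mathbf{z})^\mu$ to the polynomial identity $(1-\theta_{-\alpha_i^\vee})\cdot\frac{\theta_\lambda-\theta_{s_i\lambda}}{1-\theta_{-\alpha_i^\vee}}=\theta_\lambda-\theta_{s_i\lambda}$ in $\theta_\Lambda$ to see that the right-hand side of (\ref{bernstein}) acts on the summand by the diagonal matrix with entries $(v-1)\frac{(w\mathbf{z})^\lambda-(w\mathbf{z})^{s_i\lambda}}{1-x^{-1}}$ and $(v-1)\frac{(w\mathbf{z})^{s_i\lambda}-(w\mathbf{z})^\lambda}{1-x}$. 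Comparing the two diagonal matrices, (\ref{bernstein}) holds on the summand precisely because $D_i(w\mathbf{z})=\frac{v-1}{1-x^{-1}}$ and $D_i(s_iw\mathbf{z})=\frac{v-1}{1-x}$, which is exactly the evaluation of $D_i$ recorded in the proof of Theorem~\ref{hecketheorem}. Since this holds on every summand, (\ref{bernstein}) holds on $\mathfrak{M}(\mathbf{z})$, and the $\mathcal{H}_v$-action extends to a $\widetilde{\mathcal{H}}_v$-action in which $\theta_\lambda$ acts as in (\ref{thetaact}). I do not anticipate a genuine obstacle here; the one point needing care is the legitimacy of the division in the right-hand side of (\ref{bernstein}), which is where the hypothesis $\mathbf{z}\in\Treg(\C)$ is used, and otherwise the argument is a routine $2\times2$ block-matrix manipulation resting on the identities for $D_i$ already in hand.
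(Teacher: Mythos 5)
Your proof is correct and follows essentially the same route as the paper's: verify the Bernstein relation by evaluating $\theta_\lambda\mathfrak{T}_i-\mathfrak{T}_i\theta_{s_i\lambda}$ on each $\mathfrak{T}_i$-invariant block $\mathcal{M}(w\mathbf{z})\oplus\mathcal{M}(s_iw\mathbf{z})$, observing that the off-diagonal terms cancel because $(s_iw\mathbf{z})^{s_i\lambda}=(w\mathbf{z})^\lambda$ and the $\Aa$-maps commute with scalar multiplication, and matching the remaining diagonal terms to the evaluation of $(v-1)\frac{\theta_\lambda-\theta_{s_i\lambda}}{1-\theta_{-\alpha_i^\vee}}$ using the identity $D_i(w\mathbf{z})=\frac{v-1}{1-(w\mathbf{z})^{-\alpha_i^\vee}}$. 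The only difference is presentational: you phrase the check in explicit $2\times2$ block-matrix form and spell out the multiplicativity of $\lambda\mapsto\theta_\lambda$ and the legitimacy of the division, which the paper leaves implicit.
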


\begin{proof}
  We must check the Bernstein relation. Applying $\theta_{\lambda}
  \mathfrak{T}_i -\mathfrak{T}_i \theta_{s_i \lambda}$ to
  $(\phi_{w\mathbf{z}})$ gives $(\psi_{w\mathbf{z}})$ where
  $\psi_{w\mathbf{z}}$ equals
  \[ (w\mathbf{z})^{\lambda} (\Aa_{s_i}^{s_i w\mathbf{z}} \phi_{s_i
     w\mathbf{z}} + D_i (w\mathbf{z}) \phi_{w\mathbf{z}}) -
     (\Aa_{s_i}^{s_i w\mathbf{z}} (s_i w\mathbf{z})^{s_i \lambda}
     \phi_{s_i w\mathbf{z}} + D_i (w\mathbf{z}) (w\mathbf{z})^{s_i
     \lambda} \phi_{w\mathbf{z}}) . \]
  Note that $(s_i w\mathbf{z})^{s_i \lambda} = (w\mathbf{z})^{ \lambda}$
  so two terms cancel, and
  \begin{equation}
    \label{psifirsteval} \psi_{w\mathbf{z}} = D_i (w\mathbf{z})
    ((w\mathbf{z})^{n \lambda} - (w\mathbf{z})^{n s_i \lambda}) .
  \end{equation}
  This last quantity matches the right-hand side of the Bernstein relation given in (\ref{bernstein}).
\end{proof}

\begin{remark}
As an alternative approach, we could require that for each $\lambda \in
\Lambda$, the action of $\theta_\lambda$ on $\mathfrak{M} (\mathbf{z})$
is defined by
$$ \theta_\lambda \cdot (\phi_{w \mathbf{z}}) = ((w \mathbf{z})^\lambda
\phi_{w \mathbf{z}}), $$ as in Theorem~\ref{hecketheoremaffine}. Then one may
show that the Assumptions~\ref{assofuandme} and subsequent definition of the
action $\mathfrak{T}_i$ in (\ref{tiaction}) are necessary to obtain an
$\widetilde{\mathcal{H}}_v$ action. Indeed, if we assume that the block matrix
$(\Aa_{i,j})$ associated to the action of $T_i$ on the subspace $\mathcal{M}
(\mathbf{z}) \oplus \mathcal{M} (s_i \mathbf{z})$ is composed of linear maps
$\Aa_{i,j}$, then only the diagonal entries of this block matrix affect the
operator $ T_i \theta_\lambda - \theta_{s_i \lambda} T_i$ appearing in the
Bernstein relation. This forces $\Aa_{1,1} = D_{w \mathbf{z}}$ and $\Aa_{2,2} =
D_{s_i w \mathbf{z}}$. Then the quadratic relation necessitates the
identity~(\ref{doublesch}), and the braid relation necessitates the braid
relations on the off-diagonal entries $\Aa_{1,2}$ and $\Aa_{2,1}$ of the block
matrix as given in~(\ref{abraidb}) and (\ref{abraida}).
\end{remark}

\begin{remark} The results in this section would hold if we fixed $\mathbf{z}$ in $\Treg(\C)$
and required only the existence of vector spaces $\mathcal{M}(w\mathbf{z})$ for $w\in W$ satisfying
the conditions given above.  However in the next we will change our viewpoint slightly and vary $\mathbf{z}$.
\end{remark}

\section{Hecke Modules from Whittaker functionals\label{whitheck}}

The Casselman-Shalika formula is a formula for the spherical vector in the
Whittaker model of a principal series representation. However in this section
we will give a different interpretation of it that does not require choosing
any particular vector in the model. We will first use Whittaker functionals to
give an instance of Theorems~\ref{hecketheorem} and~\ref{hecketheoremaffine}.
Then we will extract from this representation the action of the
Hecke algebra on a space of functions on $\Treg(\C)$ by Demazure-Whittaker
operators, which are divided difference operators that are similar to
but slightly different from the well-known Demazure-Lusztig operators. 
We will then project the spherical idempotent in $\widetilde{\mathcal{H}}_v$
into this representation and obtain a version of the Casselman-Shalika
formula. Only after we have done this will we introduce particular
vectors of the representation to reinterpret this formula the usual way.

We preserve the notation presented in the Introduction. Thus $F$ is a
nonarchimedean local field of residue cardinality $q$, $G$ is a split
reductive group defined over $F$, and $\widehat{G}$ is the (connected) Langlands
dual group. We actually require $G$ to be a group scheme over the
ring $\mathfrak{o}$ of integers in $F$ such that $K^{\circ} := G(\mathfrak{o})$ is
a special maximal compact subgroup of $G(F)$, that is, the stabilizer
of a special vertex in the building. This can be arranged using
the integral structure coming from a Chevalley basis. Let $\Phi^{\vee}_+$
denote a set of positive roots of $\Phi^{\vee}$.  Let $B$ denote the
corresponding positive Borel subgroup of $G$ containing the maximal torus $T$
and $U$ be the maximal unipotent subgroup of $B$. Let $\Phi^{\vee}_-$ denote
the complementary set of negative roots, and $B^-$ and $U^-$ be the opposite
Borel subgroup to $B$ and its unipotent radical, respectively. Let $\psi : U^-
(F) \to \mathbb{C}$ be a nondegenerate additive character such that if $x_{-
  \alpha^\vee_i} : F \to U^- (F)$ is the one-parameter subgroup tangent to the
simple root $\alpha^\vee_i$, then $\psi \circ x_{- \alpha^\vee_i}$ is trivial
on the ring $\mathfrak{o}$ of integers in $F$, but no larger fractional ideal.

If $\mathbf{z} \in \Treg(\C) \subseteq \widehat{T} (\mathbb{C})$, then $\mathbf{z}$
indexes an unramified quasicharacter $\chi_{\mathbf{z}}$ of $T (F)$. Indeed,
we may identify $T (F) / T (\mathfrak{o})$ with the weight lattice $\Lambda =
X^{\ast} (\widehat{T})$, so if $t \in T (F)$ let $\lambda$ be the corresponding
weight under this identification, and define $\chi_{\mathbf{z}} (t)
=\mathbf{z}^{\lambda}$.

Let $M (\mathbf{z})$ be the principal series representation induced by
$\chi_{\mathbf{z}}$. This consists of all locally constant functions $f : G
(F) \to \mathbb{C}$ such that
\[ f (bg) = (\delta_B^{1 / 2} \chi_{\mathbf{z}}) (b) f (g), \qquad b \in B (F)\]
where $\delta$ is the modular quasicharacter of $B (F)$.
Let $\pi_{\mathbf{z}} : G (F) \longrightarrow \operatorname{End} (M (\mathbf{z}))$
be the action of $G (F)$ by right translation.
We denote by $\Ai_w:M(\mathbf{z})\to M(\mathbf{wz})$ the
standard intertwining integral
\[(\Ai_w f)(g)=\int_{U(F)\cap wU^-(F)w^{-1}}f(w^{-1}ug)\,d u,\]
modulo convergence issues. (See \cite{CasselmanSpherical}.)
Here we are chosing a representative for the Weyl group element $w$ in
$K^\circ$.

The module $M (\mathbf{z})_{U^-, \psi}$ of coinvariants is one-dimensional
by Rodier~{\cite{RodierWhittaker}}, Theorem~3. It may be identified with $\mathbb{C}$ as
follows. If $f \in M (\mathbf{z})$, define the Whittaker functional
$\omega_{\psi}^{\mathbf{z}} : M (\mathbf{z}) \longrightarrow \mathbb{C}$
to be
\begin{equation}
  \text{} \label{whitint} \int_{U^- (F)} f (u g) \overline{\psi (u)} \, d u.
\end{equation}
The integral is absolutely convergent if $| \mathbf{z}^{\alpha^\vee} | < 1$ for
all positive roots, and may be analytically continued to all $\mathbf{z}$.
See {\cite{CasselmanShalika}}, Section~2. Alternatively, for any
$\mathbf{z}$, the integral (\ref{whitint}) is convergent when $f$ has
compact support in the open Bruhat cell $U^- (F) B (F)$, and can be extended
to all $M (\mathbf{z})$ as in Corollary 1.8 of {\cite{CasselmanShalika}}.

The functional $\omega_{\psi}$ factors through the module of coinvariants, and
hence induces an isomorphism $\omega^{\mathbf{z}}_{\psi} : M
(\mathbf{z})_{U^-, \psi} \longrightarrow \mathbb{C}$ of one-dimensional
vector spaces. Let $v = q^{- 1}$, where $q$ is the cardinality of the residue
field. 

\begin{proposition} \label{magicfactor}
  Let $s_i$ be a simple reflection. We have
  \[ \omega_{\psi}^{s_i \mathbf{z}} \circ \Ai_{s_i}^{\mathbf{z}} =
     \frac{1 - v\mathbf{z}^{- \alpha^\vee}}{1 -\mathbf{z}^{\alpha^\vee}}
     \omega_{\psi}^{\mathbf{z}} . \]
\end{proposition}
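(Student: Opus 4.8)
The plan is to reduce the statement to a rank-one computation, where everything happens inside the $SL_2$ (or $PGL_2$) generated by the simple reflection $s_i$, and then to evaluate the resulting one-dimensional integral explicitly. First I would note that the intertwining integral $\Ai_{s_i}^{\mathbf{z}}$ and the Whittaker functional $\omega_\psi$ are both built out of integrals over unipotent subgroups attached to the single positive root $\alpha_i^\vee$: indeed $U(F) \cap s_i U^-(F) s_i^{-1}$ is just the one-parameter subgroup $x_{\alpha_i^\vee}(F)$, and the character $\psi$ restricted to $x_{-\alpha_i^\vee}(F)$ is the only part of $\psi$ that interacts with $\Ai_{s_i}$. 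So I would decompose $U^-(F)$ as $x_{-\alpha_i^\vee}(F)$ times the complementary unipotent, and observe that the complementary integration commutes past $\Ai_{s_i}^{\mathbf{z}}$ unchanged, so the whole identity follows from the corresponding identity for the rank-one group $G_i$ of semisimple rank one attached to $\alpha_i^\vee$, with its induced character $\chi_{\mathbf{z}}$ restricted to the torus of $G_i$ (which depends only on $x := \mathbf{z}^{\alpha_i^\vee}$).

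Next, in the rank-one case, I would make the standard change of variables in the intertwining integral. Writing $s_i$ for a representative in $K^\circ$, and using the Bruhat-type factorization $s_i^{-1} x_{\alpha_i^\vee}(t) = x_{-\alpha_i^\vee}(\ast)\, h(\ast)\, x_{\alpha_i^\vee}(\ast) \cdot k$ valid for $t \neq 0$ (with an explicit rational dependence of the torus element $h(\ast)$ on $t$, and $k \in K^\circ$), one computes $(\Ai_{s_i}^{\mathbf{z}} f)(u g)$ for $u = x_{-\alpha_i^\vee}(r)$ against $\overline{\psi(u)}$. After interchanging the $r$-integral and the $t$-integral (justified in the range of absolute convergence $|x|<1$, then extended by analytic continuation as in \cite{CasselmanShalika}), the $r$-integral produces the local Whittaker/Tate-type integral $\int_F \chi_0(\ast(t,r))\,\overline{\psi(x_{-\alpha_i^\vee}(r))}\,dr$, which collapses to a ratio of the form $\frac{1 - v\, x^{-1}}{1-x}$ up to a factor of $\omega_\psi^{\mathbf{z}}(f)$; here the key input is exactly the normalization condition on $\psi \circ x_{-\alpha_i^\vee}$ (trivial on $\mathfrak{o}$, nontrivial on any larger fractional ideal), together with the volume normalization and the identity $v = q^{-1}$. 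This is essentially the computation underlying the rank-one Casselman--Shalika / Gindikin--Karpelevich identity, specialized to the Whittaker side.

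The main obstacle, and the part requiring care rather than cleverness, is the bookkeeping of convergence and analytic continuation: the intertwining integral and the Whittaker integral are each only conditionally convergent outside a cone of $\mathbf{z}$, so the interchange of integration must first be justified for $|\mathbf{z}^{\alpha_i^\vee}|<1$ (and simultaneously in the half-space where $\omega_\psi^{\mathbf{z}}$ converges absolutely), and then the resulting meromorphic identity in $\mathbf{z}$ is propagated to all of $\Treg(\C)$ by the analytic continuations already cited. A secondary point is to confirm that the precise shape of the scalar is $\frac{1-v\mathbf{z}^{-\alpha^\vee}}{1-\mathbf{z}^{\alpha^\vee}}$ rather than one of its Weyl-conjugates; this is pinned down by checking the direction of the change of variables (we are going from $M(\mathbf{z})$ to $M(s_i\mathbf{z})$, so $x$ rather than $x^{-1}$ appears in the denominator) and by a sanity check: composing $\omega_\psi^{s_i\mathbf{z}}\circ\Ai_{s_i}^{\mathbf{z}}$ with $\omega_\psi^{s_i s_i\mathbf{z}}\circ\Ai_{s_i}^{s_i\mathbf{z}}$ must reproduce the scalar $\Ai_{s_i}^{s_i\mathbf{z}}\circ\Ai_{s_i}^{\mathbf{z}}$, and one verifies
\[
\frac{1-v\,\mathbf{z}^{\alpha^\vee}}{1-\mathbf{z}^{-\alpha^\vee}}\cdot\frac{1-v\,\mathbf{z}^{-\alpha^\vee}}{1-\mathbf{z}^{\alpha^\vee}}
\]
is indeed the product of the known scalar for $\Ai_{s_i}\circ\Ai_{s_i}$ with the ratio of Whittaker normalizations, consistent with \eqref{doublesch} once the Boltzmann/Whittaker factors in the schema are matched.
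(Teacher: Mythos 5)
Your proposal is correct and follows the same rank-one reduction plus analytic-continuation argument as the sources the paper cites for this proposition (Casselman--Shalika, Proposition~4.3, and Brubaker--Bump--Licata, Proposition~2); the paper supplies no independent computation, only the citation, so your fleshed-out version is precisely the argument those references contain. The closing consistency check against the doubling scalar~\eqref{doublesch} correctly resolves which Weyl-conjugate of the scalar must appear.
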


\begin{proof}
  See {\cite{CasselmanShalika}}, Proposition 4.3, and
  {\cite{BrubakerBumpLicata}}, Proposition~2.
\end{proof}

Now we are ready to apply Theorems~\ref{hecketheorem} and~
\ref{hecketheoremaffine}. we may take $\mathcal{M}(\mathbf{z})=\mathbb{C}$
which we identify with $M(\mathbf{z})_{U^-,\psi}$ by
means of the Whittaker functionals $\omega_\psi^{\mathbf{z}}$.
Thus $\mathfrak{M}(\mathbf{z})=\bigoplus_{w\in W}\mathbb{C}$ and the
operator $\Aa_{s_i}^\mathbf{z}$ is thus interpreted
as the scalar $\frac{1 - v\mathbf{z}^{- \alpha^\vee}}{1 -\mathbf{z}^{\alpha^\vee}}$.

Let $\mathcal{O}(\widehat{T}(\C))$ denote the space of functions
on $\widehat{T}(\C)$ that are finite linear combinations of
the functions $\mathbf{z}^\lambda$ with $\lambda\in\Lambda$.
Then $\mathcal{O}(\Treg(\C))$ is the localization of this
commutative ring with respect to the multiplicative set
generated by $1-\mathbf{z}^\alpha$ where $\alpha$ runs through
the positive roots. Then $\mathcal{O}(\Treg(\C))$ is a ring of
holomorphic functions on $\Treg(\C)$.

We may think of $\mathfrak{M}$ as a vector bundle over $\Treg(\C)$.
The elements of $\bigoplus_{w\in W}\mathcal{O}(\Treg(\C))$ are
then holomorphic sections. We have Weyl group actions on $\Treg(\C)$ and on
$\mathcal{O}(\Treg(\C))$, so $\mathfrak{M}$ is an equivariant vector bundle.  It
is easy to check that the Weyl group action commutes with the action of
$\widetilde{\mathcal{H}}_v$ defined in Theorems~\ref{hecketheorem}
and~\ref{hecketheoremaffine}.

We will restrict ourselves to the submodule $\mathfrak{M}^W$
of invariant sections.
This vector space is isomorphic to $\mathcal{O}(\Treg(\C))$
as follows. If $(\phi_w)$ is an invariant section,
let $\phi=\phi_1\in\mathcal{O}(\Treg{\C})$. Then $\phi_w=w\phi$
for $w\in W$. If we thus identify $\mathcal{O}(\Treg(\C))$
with $\mathfrak{M}^W$, then it becomes a
$\widetilde{\mathcal{H}}_v$-module.

Let $\rho$ be half the sum of the positive roots. 

\begin{theorem}
  \label{antisphericalthm}
  \begin{enumerate}
  \item[(i)]
    Let $f\in\mathcal{O}(\Treg(\C))$. Identifying $\mathfrak{M}^W$ with $\mathcal{O}(\Treg(\C))$,
the $\widetilde{\mathcal{H}}_v$ action from Section~\ref{schemasect} with 
$\mathcal{A}_{s_i}^{\mathbf{z}} := \displaystyle \frac{1 - v\mathbf{z}^{- \alpha_i^\vee}}{1 -\mathbf{z}^{\alpha_i^\vee}}$
is given by
    \begin{equation}
      \label{demazurewhittaker}
      (T_if)(\mathbf{z})= \frac{(1 - v) \mathbf{z}^{\alpha^\vee_i}}{1 -\mathbf{z}^{\alpha^\vee_i}} f
   (\mathbf{z}) + \frac{1 - v\mathbf{z}^{\alpha^\vee_i}}{1 -\mathbf{z}^{-
          \alpha^\vee_i}} f (s_i \mathbf{z}),
      \qquad \theta_\lambda f(\mathbf{z})=\mathbf{z}^\lambda\,f(\mathbf{z}).
    \end{equation}
  \item[(ii)]
  The module $\mathcal{O}(\Treg(\C))=\mathfrak{M}^W$
  is a free $\theta_{\Lambda}$-module
  generated by the $\mathcal{H}_v$-invariant vector
  $\mathbf{z}^\rho$, which satisfies
  \begin{equation}
    \label{antispherical}
    T_i\mathbf{z}^\rho = -\mathbf{z}^\rho.
  \end{equation}
  \item[(iii)] The representation of $\widetilde{H}_v$ on $\mathcal{O}(\Treg(\C))$
  induces an action on $\mathcal{O}(\widehat{T}(\C))$.
  \end{enumerate}
\end{theorem}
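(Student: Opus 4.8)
The plan is to read off all three parts from the explicit form of the schema action in Theorems~\ref{hecketheorem} and~\ref{hecketheoremaffine}; once (i) is in hand, (ii) and (iii) are one–line computations. For \emph{part (i)}: by the remark preceding the theorem the $W$-action on $\mathfrak M$ commutes with the $\widetilde{\mathcal H}_v$-action, so $\mathfrak M^W$ is a $\widetilde{\mathcal H}_v$-submodule and, under the identification $(\phi_w)\leftrightarrow\phi:=\phi_1$, the action is determined by its effect on the $w=1$ component. I would read this component off from \eqref{thetaact} and \eqref{tiaction}. For $\theta_\lambda$ it is immediately $\mathbf z^\lambda f(\mathbf z)$. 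For $\mathfrak T_i$, recalling that here $\Aa_{s_i}^{\mathbf z}$ is the scalar $\dfrac{1-v\mathbf z^{-\alpha_i^\vee}}{1-\mathbf z^{\alpha_i^\vee}}$ of Proposition~\ref{magicfactor}, the $w=1$ entry of \eqref{tiaction} is
\[
(T_if)(\mathbf z)=\frac{(1-v)\mathbf z^{\alpha_i^\vee}}{1-\mathbf z^{\alpha_i^\vee}}\,f(\mathbf z)+\Aa_{s_i}^{s_i\mathbf z}\,f(s_i\mathbf z),
\]
and since $(s_i\mathbf z)^{\alpha_i^\vee}=\mathbf z^{-\alpha_i^\vee}$ we get $\Aa_{s_i}^{s_i\mathbf z}=\dfrac{1-v\mathbf z^{\alpha_i^\vee}}{1-\mathbf z^{-\alpha_i^\vee}}$, which is exactly the coefficient in \eqref{demazurewhittaker}. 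No separate check is needed that $T_if$ again lies in $\mathcal O(\Treg(\C))$: it is the image of a module element under a module endomorphism.

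For \emph{part (iii)} it suffices to verify that the algebra generators $T_i$ and $\theta_\lambda$ preserve the subspace $\mathcal O(\widehat T(\C))=\C[\Lambda]\subseteq\mathcal O(\Treg(\C))$; the defining relations of $\widetilde{\mathcal H}_v$ then hold on that subspace automatically and the representation restricts. For $\theta_\lambda$ (multiplication by $\mathbf z^\lambda$) this is clear. For $T_i$ I would add $f$ to \eqref{demazurewhittaker} and simplify to
\[
(T_i+1)f=(1-v\mathbf z^{\alpha_i^\vee})\,\frac{f(\mathbf z)-\mathbf z^{\alpha_i^\vee}f(s_i\mathbf z)}{1-\mathbf z^{\alpha_i^\vee}};
\]
the quotient on the right is the classical Demazure operator applied to $f$ and preserves $\C[\Lambda]$ — for a monomial $f=\mathbf z^\mu$ the numerator is $\mathbf z^\mu\bigl(1-\mathbf z^{(1-\langle\mu,\alpha_i\rangle)\alpha_i^\vee}\bigr)$, visibly divisible by $1-\mathbf z^{\alpha_i^\vee}$ in $\C[\Lambda]$ — so $T_if\in\C[\Lambda]$, proving (iii).

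For \emph{part (ii)}: substituting $f=\mathbf z^\rho$ into \eqref{demazurewhittaker} and using $s_i\rho=\rho-\alpha_i^\vee$, so $(s_i\mathbf z)^\rho=\mathbf z^{-\alpha_i^\vee}\mathbf z^\rho$, the two terms combine:
\[
(T_i\mathbf z^\rho)(\mathbf z)=\mathbf z^\rho\left(\frac{(1-v)\mathbf z^{\alpha_i^\vee}}{1-\mathbf z^{\alpha_i^\vee}}-\frac{1-v\mathbf z^{\alpha_i^\vee}}{1-\mathbf z^{\alpha_i^\vee}}\right)=-\mathbf z^\rho,
\]
which is \eqref{antispherical}; hence $\C\,\mathbf z^\rho$ carries the sign character of $\mathcal H_v$. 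For freeness, by (i) each $\theta_\lambda$ acts as multiplication by the invertible Laurent monomial $\mathbf z^\lambda$, so $\theta_\lambda\mapsto\theta_\lambda\!\cdot\!\mathbf z^\rho=\mathbf z^{\lambda+\rho}$ is an injective map of $\theta_\Lambda$-modules whose image is the free rank-one $\theta_\Lambda$-submodule generated by $\mathbf z^\rho$ (which, when $\rho\in\Lambda$, is exactly the submodule $\mathcal O(\widehat T(\C))$ of (iii); in general one regards $\theta_\Lambda$ as acting through its localization $\mathcal O(\Treg(\C))$ at the roots, over which the unit $\mathbf z^\rho$ is a free generator). None of this is hard; the only point requiring care is the Weyl-group bookkeeping in (i) — specifically the identity $(s_i\mathbf z)^{\pm\alpha_i^\vee}=\mathbf z^{\mp\alpha_i^\vee}$ that turns $\Aa_{s_i}^{s_i\mathbf z}$ into the coefficient displayed in \eqref{demazurewhittaker} — together with being explicit, in (ii), about which (possibly localized) Bernstein subalgebra one wants $\mathbf z^\rho$ to generate freely.
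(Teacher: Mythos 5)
Your proof is correct and follows essentially the same route as the paper's: read off the $w=1$ component of the schema action using the identity $(s_i\mathbf{z})^{\pm\alpha_i^\vee}=\mathbf{z}^{\mp\alpha_i^\vee}$ for (i), specialize to $f=\mathbf{z}^\rho$ for (ii), and check divisibility of the numerator by $1-\mathbf{z}^{\alpha_i^\vee}$ on monomials for (iii). One point where you are actually more careful than the published proof: the paper's argument for (ii) asserts that $\{\theta_\lambda\mathbf{z}^\rho\}_{\lambda\in\Lambda}$ is a basis of $\mathfrak{M}^W=\mathcal{O}(\Treg(\C))$, but as you note these monomials span only $\mathcal{O}(\widehat{T}(\C))$, not the localized ring $\mathcal{O}(\Treg(\C))$; one must either restrict attention to the $\theta_\Lambda$-submodule they generate (which is the subspace of (iii)) or, as you say, let $\theta_\Lambda$ act through its localization, where the unit $\mathbf{z}^\rho$ is a free generator. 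Flagging that distinction is the right instinct.
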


\begin{proof}
  With our identifications, $f$ corresponds to the tuple
  $(\phi_w) \in \mathfrak{M}$ where $\phi_w(\mathbf{z})=f(w^{-1}\mathbf{z})$.
  Applying $T_i$ as in (\ref{tiaction}) with $w=1$ and noting 
  $\mathcal{A}_{s_i}^{s_i \mathbf{z}} = \frac{1 - v\mathbf{z}^{\alpha^\vee_i}}{1 -\mathbf{z}^{-
          \alpha^\vee_i}}$,
  we obtain (\ref{demazurewhittaker}). In particular if
  $f(\mathbf{z})=\mathbf{z}^\rho$,
  this equals
  \[ \frac{(1 - v) \mathbf{z}^{\alpha^\vee_i}}{1 -\mathbf{z}^{\alpha^\vee_i}}
  \mathbf{z}^{\rho} +\Aa_{s_i}^{s_i \mathbf{z}}
     \mathbf{z}^{s_i \rho} = \frac{(1 - v) \mathbf{z}^{\alpha^\vee_i}}{1
     -\mathbf{z}^{\alpha^\vee_i}} \mathbf{z}^{\rho} + \frac{1 -
     v\mathbf{z}^{\alpha^\vee_i}}{1 -\mathbf{z}^{- \alpha^\vee_i}} \mathbf{z}^{\rho -
     \alpha^\vee_i} . \]
  After some algebra, this equals $-\mathbf{z}^{\rho}$, proving
  (\ref{antispherical}).
  
  It is clear from (\ref{thetaact}) that $\theta_{\lambda}
  \mathbf{z}^{\rho}$ where $\lambda$ runs through $\Lambda$ are a
  basis of $\mathfrak{M}^W$, and (ii) is now clear.

  As for (iii), $\mathcal{O}(\widehat{T}(\C))$ is contained in $\mathcal{O}(\Treg(\C))$,
  and we must argue that it is invariant under the $T_i$, which \textit{a priori}
  could introduce denominators. To check that the numerator in $T_if$
  is always divisible by $1-\mathbf{z}^{\alpha_i^\vee}$ it is sufficient to consider the case
  $f(\mathbf{z})=\mathbf{z}^\lambda$, since these span $\mathcal{O}(\widehat{T}(\C))$.
  Then $f(s_i\mathbf{z})=f(\mathbf{z})\,\mathbf{z}^{-\langle\alpha_i,\lambda\rangle\alpha_i^\vee}$
  and $\langle\alpha_i,\lambda\rangle\in\Z$, from which the required
  divisibility is easily checked.
\end{proof}

The {\textit{antispherical}} module of $\tilde{\mathcal{H}}_v$ is the module
induced from the character $T_i \mapsto - 1$ of $\mathcal{H}_v$.

\begin{corollary}
  \label{anticonv}The representation of Theorem~\ref{antisphericalthm} 
  on $\mathcal{O}(\widehat{T}(\C))$ is the antispherical representation.
\end{corollary}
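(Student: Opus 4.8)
The plan is to identify the $\widetilde{\mathcal{H}}_v$-module $\mathcal{O}(\widehat{T}(\C))$ constructed in Theorem~\ref{antisphericalthm} with the module induced from the character $\epsilon: T_i \mapsto -1$ of the finite Hecke algebra $\mathcal{H}_v$. Recall that as a vector space $\widetilde{\mathcal{H}}_v = \mathcal{H}_v \otimes \theta_\Lambda$, and since $\mathcal{H}_v$ is a free right $\mathcal{H}_v$-module on the trivial subalgebra of scalars, the induced module $\mathrm{Ind}_{\mathcal{H}_v}^{\widetilde{\mathcal{H}}_v}(\C_\epsilon)$ is free as a $\theta_\Lambda$-module of rank one; call its generator $\xi$, so that $T_i \xi = -\xi$ for all $i$. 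The natural map to define is the $\theta_\Lambda$-module homomorphism $\Psi : \mathrm{Ind}_{\mathcal{H}_v}^{\widetilde{\mathcal{H}}_v}(\C_\epsilon) \to \mathcal{O}(\widehat{T}(\C))$ sending $\xi \mapsto \mathbf{z}^\rho$ and hence $\theta_\lambda \xi \mapsto \mathbf{z}^{\lambda+\rho}$.

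The key steps are as follows. First, $\Psi$ is well-defined as a map of $\widetilde{\mathcal{H}}_v$-modules: by the universal property of induction, it suffices to check that $\mathbf{z}^\rho$ is annihilated by $T_i + 1$ for every $i$, which is exactly equation~(\ref{antispherical}) from Theorem~\ref{antisphericalthm}(ii). Second, $\Psi$ is surjective: the image contains every $\mathbf{z}^{\lambda+\rho} = \theta_\lambda \mathbf{z}^\rho$, and since $\rho$ shifts $\Lambda$ bijectively to itself (as $\rho$ lies in $\Lambda$ — or, if $\rho \notin \Lambda$, one works in $\frac{1}{2}\Lambda$ throughout, which changes nothing), these monomials span $\mathcal{O}(\widehat{T}(\C))$. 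Third, $\Psi$ is injective: by Theorem~\ref{antisphericalthm}(ii), $\mathcal{O}(\Treg(\C)) = \mathfrak{M}^W$ is a \emph{free} $\theta_\Lambda$-module generated by $\mathbf{z}^\rho$, and the restriction to $\mathcal{O}(\widehat{T}(\C))$ is the $\theta_\Lambda$-submodule $\theta_\Lambda \cdot \mathbf{z}^\rho$, which by freeness is free of rank one; since $\mathrm{Ind}_{\mathcal{H}_v}^{\widetilde{\mathcal{H}}_v}(\C_\epsilon)$ is also free of rank one over $\theta_\Lambda$ and $\Psi$ carries the generator to the generator, it is an isomorphism of $\theta_\Lambda$-modules, hence of $\widetilde{\mathcal{H}}_v$-modules.

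The only genuine content beyond bookkeeping is the identity $T_i \mathbf{z}^\rho = -\mathbf{z}^\rho$, and this has already been verified in the proof of Theorem~\ref{antisphericalthm}; so the corollary is essentially a formal consequence once one observes that both sides are cyclic $\theta_\Lambda$-modules freely generated by a vector on which $\mathcal{H}_v$ acts through $\epsilon$. I expect the main (minor) obstacle to be the rank-one freeness comparison: one must be slightly careful that the \emph{entire} induced module $\mathrm{Ind}(\C_\epsilon)$, and not merely some quotient, is free of rank one over $\theta_\Lambda$ — this follows from the PBW-type decomposition $\widetilde{\mathcal{H}}_v \cong \mathcal{H}_v \otimes \theta_\Lambda$ as a $(\mathcal{H}_v, \theta_\Lambda)$-bimodule — together with the parallel fact, supplied by Theorem~\ref{antisphericalthm}(ii), that $\mathcal{O}(\widehat{T}(\C))$ is free of rank one over $\theta_\Lambda$ on $\mathbf{z}^\rho$. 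Given these two freeness statements and the matching of generators, the isomorphism is immediate.
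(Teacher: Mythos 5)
Your proof is correct and follows essentially the same line as the paper's, which simply cites the bimodule decomposition $\widetilde{\mathcal{H}}_v=\theta_\Lambda\otimes\mathcal{H}_v$ together with the fact that $\mathcal{O}(\widehat{T}(\C))=\bigoplus_{\lambda\in\Lambda}\theta_\lambda(\mathbf{z}^\rho)$ is free of rank one over $\theta_\Lambda$ on the antispherical generator $\mathbf{z}^\rho$. You have merely spelled out the details (well-definedness via Frobenius reciprocity, surjectivity, injectivity via the two rank-one freeness statements) that the paper leaves implicit, and your remark about $\rho$ possibly lying in $\tfrac12\Lambda$ is a harmless hedge the paper glosses over.
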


\begin{proof}
  This follows since
  $\widetilde{\mathcal{H}}_v=\theta_{\Lambda}\otimes\mathcal{H}_v$,
  and
  $\mathcal{O}(\widehat{T}(\C))=\bigoplus_{\lambda\in\Lambda}\theta_\lambda(\mathbf{z}^\rho)$.
\end{proof}

The antispherical representation appears in another way in this subject:
Arkhipov and Bezrukavnikov~{\cite{ArkhipovBezrukavnikov}} and Chan and
Savin~{\cite{ChanSavin}} showed that it is isomorphic to the convolution
module of Iwahori fixed vectors in the Gelfand-Graev representation
$\operatorname{ind}^G_{U^-} (\psi)$. Although that result and
Corollary~\ref{anticonv} both relate Whittaker models to the antispherical
representation, the two representations arise in different ways, and the two
statements are not obviously equivalent. On the other hand, the antispherical
representation appears in this connection
in~\cite{BrubakerBumpFriedbergMatrix, BrubakerBumpLicata, BrubakerBumpFriedbergPacificJ}, and the results there are closely
related to Theorem~\ref{antisphericalthm} and its corollary.

\begin{remark}
\label{antisphericalremark}
For reasons that we will explain shortly we wish to consider
the action of Theorem~\ref{antisphericalthm} conjugated by the endomorphism $f
\mapsto f'$ of $\mathcal{O} (\widehat{T}(\C))$, where
$f' (\mathbf{z}) = f (\mathbf{z}^{- 1})$. In this action $T_i$ acts by the operator
\begin{equation}
  \label{realdemazurewhittaker} \mathcal{T}_i\,f (\mathbf{z}) = \frac{1 -
  v}{\mathbf{z}^{\alpha^\vee_i} - 1} f (\mathbf{z}) + \frac{v\mathbf{z}^{-
      \alpha^\vee_i} - 1}{\mathbf{z}^{\alpha^\vee_i} - 1} f (s_i \mathbf{z})=
  \frac{f (\mathbf{z}) - f (s_i
   \mathbf{z})}{\mathbf{z}^{\alpha_i^{\vee}} - 1} - v \frac{f
   (\mathbf{z}) -\mathbf{z}^{- \alpha_i^{\vee}} f (s_i
   \mathbf{z})}{\mathbf{z}^{\alpha_i^\vee} - 1},
\end{equation}
which agrees with the operator $\mathfrak{T}_i$ defined in
{\cite{BrubakerBumpLicata}}, equations (3) and (14). We refer to this as the
{\textit{Demazure-Whittaker operator}}. Also in the modified action
\[\theta_\lambda f(\mathbf{z})=\mathbf{z}^{-\lambda}f(\mathbf{z}).\]
\end{remark}

The Casselman-Shalika formula is a statement about the spherical vector in the
Whittaker model of $M (\mathbf{z})$, and is usually proved by considerations
involving the Iwahori-fixed vectors in the model. However, as we will now
explain, we may use our present setup to formulate a version of the
Casselman-Shalika formula that does not involve either the spherical vector or
the Iwahori fixed vectors.

The finite Hecke algebra $\mathcal{H}_v$ generated by the $T_i$ has 
a basis $T_w = T_{i_1} \cdots T_{i_k}$ ($w\in W$), where $w = s_{i_1} \cdots
s_{i_k}$ is a reduced decomposition into simple reflections. Then $I^{\circ} =
\sum_w T_w$ is an idempotent that generates a 2-sided ideal in
$\widetilde{\mathcal{H}}_v$. This ideal is the commutative spherical Hecke
algebra $\mathcal{H}^{\circ}_v$, and $I^\circ$ is its unit
element. Applied to $M (\mathbf{z})$ in the convolution action, the operator
$I^{\circ}$ is the projection onto the spherical vector. This point of view
leads to the classical Casselman-Shalika formula for the spherical
vector in the Whittaker model.

However we wish to do something different and apply $I^\circ$ in the action of
Remark~\ref{antisphericalremark}. Hence we get a version of the Casselman-Shalika
formula that makes no reference to spherical vectors. This fact is
equivalent to equation (5.5.14) in Macdonald~\cite{MacdonaldOrthogonal}.
Here we will prove it again using our methods.

\begin{theorem}
  \label{abstractcs}
  Let $\lambda \in \Lambda$ be a dominant weight, and let
  $\chi_{\lambda}$ be the irreducible character of $\widehat{G} (\mathbb{C})$
  with highest weight $\lambda$. Let $\mathcal{I}_{\circ}$ denote the action
  of $I^{\circ} \in \tilde{\mathcal{H}}_v$ on $\mathcal{O} (\widehat{T}(\C))$ in the
  modified action of Remark~\ref{antisphericalremark}. Then
  \begin{equation}
    \label{abstractcsfo}
    \mathcal{I}_{\circ} (\mathbf{z}^{\lambda}) = \prod_{\alpha^\vee \in \Phi^{\vee}_+} (1
    - v\mathbf{z}^{- \alpha^\vee}) \chi_{\lambda} (\mathbf{z}) .
  \end{equation}
\end{theorem}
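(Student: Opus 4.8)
The plan is to compute $\mathcal{I}_\circ(\mathbf{z}^\lambda) = \sum_{w\in W}\mathcal{T}_w(\mathbf{z}^\lambda)$ directly, where $\mathcal{T}_w = \mathcal{T}_{i_1}\cdots\mathcal{T}_{i_k}$ for a reduced word $w = s_{i_1}\cdots s_{i_k}$, using the explicit form of the Demazure-Whittaker operator $\mathcal{T}_i$ from (\ref{realdemazurewhittaker}). First I would factor $\mathcal{T}_i$ as a sum of a ``Demazure part'' and a correction: write $\mathcal{T}_i f = \partial_i f - v\,\partial_i(\mathbf{z}^{-\alpha_i^\vee}f)$ where $\partial_i f = \dfrac{f - f(s_i\mathbf{z})}{\mathbf{z}^{\alpha_i^\vee}-1}$, or better, observe from the second equality in (\ref{realdemazurewhittaker}) that $\mathcal{T}_i$ is conjugate (by multiplication by $\prod_{\alpha^\vee\in\Phi^\vee_+}(1 - v\mathbf{z}^{-\alpha^\vee})$, up to $s_i$-dependent denominators) to a standard Demazure operator. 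Concretely, set $\delta(\mathbf{z}) = \prod_{\alpha^\vee\in\Phi^\vee_+}(1 - v\mathbf{z}^{-\alpha^\vee})$. The key local identity I would establish is that for any $f$,
\[
\mathcal{T}_i\bigl(\delta^{-1} g\bigr) = \delta^{-1}\,D_i g,
\]
where $D_i$ is the classical Demazure operator $D_i g = \dfrac{g - \mathbf{z}^{-\alpha_i^\vee} g(s_i\mathbf{z})}{1 - \mathbf{z}^{-\alpha_i^\vee}}$ (or the variant with the right normalization). This reduces to checking that $\mathcal{T}_i$ intertwines multiplication by $\delta$ with $D_i$, which follows because $s_i$ permutes $\Phi^\vee_+\setminus\{\alpha_i^\vee\}$ and sends $\alpha_i^\vee\mapsto -\alpha_i^\vee$, so $\delta/(s_i\delta) = (1 - v\mathbf{z}^{\alpha_i^\vee})/(1 - v\mathbf{z}^{-\alpha_i^\vee})$, and the ratio of the coefficients appearing in $\mathcal{T}_i$ matches this exactly by a short computation.

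Granting that intertwining identity, I would then argue: since each $\mathcal{T}_i$ conjugates to $D_i$ via multiplication by $\delta$, the whole operator $\mathcal{I}_\circ = \sum_w \mathcal{T}_w$ conjugates to $\sum_w D_w$, the full Demazure operator (the Weyl symmetrizer up to normalization). That is,
\[
\mathcal{I}_\circ(\delta^{-1} g) = \delta^{-1}\Bigl(\sum_{w\in W} D_w\Bigr) g.
\]
Now apply this with $\delta^{-1} g = \mathbf{z}^\lambda$, i.e. $g = \delta\,\mathbf{z}^\lambda$. The operator $\sum_{w\in W} D_w$ applied to a single dominant weight $\mathbf{z}^\lambda$ is, by the classical Demazure character formula (equivalently the Weyl character formula), $\chi_\lambda(\mathbf{z})$ times the appropriate product over positive roots coming from the Weyl denominator; this is precisely equation (5.5.14) in Macdonald that the authors reference. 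I would invoke that, or rederive it: $\sum_w D_w = \dfrac{1}{\prod_{\alpha^\vee\in\Phi^\vee_+}(1-\mathbf{z}^{-\alpha^\vee})}\sum_{w\in W}(-1)^{\ell(w)} w$ after clearing denominators, and $\sum_w (-1)^{\ell(w)} w(\mathbf{z}^{\lambda+\rho}) = \mathbf{z}^\rho\prod_{\alpha^\vee}(1-\mathbf{z}^{-\alpha^\vee})\chi_\lambda(\mathbf{z})$ by Weyl. Combining with the factor $\delta$ gives exactly $\prod_{\alpha^\vee\in\Phi^\vee_+}(1 - v\mathbf{z}^{-\alpha^\vee})\,\chi_\lambda(\mathbf{z})$, which is (\ref{abstractcsfo}).

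The main obstacle I anticipate is pinning down the correct normalization in the intertwining identity $\mathcal{T}_i(\delta^{-1}g) = \delta^{-1}D_i g$ — in particular getting the exponents and the placement of $\mathbf{z}^{-\alpha_i^\vee}$ versus $\mathbf{z}^{\alpha_i^\vee}$ right so that the telescoping over all of $W$ works and the residual $\mathbf{z}^\rho$ factors cancel cleanly against the $\rho$-shift implicit in the Weyl character formula. A secondary subtlety is justifying that $\delta\,\mathbf{z}^\lambda$, while not a polynomial, still lies in a space on which $\sum_w D_w$ acts with the stated value — this is fine because $\delta$ is $W$-antisymmetric-free (it has the transformation property tracked above) and $D_i$ preserves $\mathcal{O}(\Treg(\C))$; alternatively one checks the formula on the dense set of $\mathbf{z}^\lambda$ and extends. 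One should also confirm that applying $I^\circ$ in the action of Remark~\ref{antisphericalremark} (the $\mathbf{z}\mapsto\mathbf{z}^{-1}$-conjugated action) rather than the original action of Theorem~\ref{antisphericalthm} is what produces the product $\prod(1-v\mathbf{z}^{-\alpha^\vee})$ with the correct sign of the exponent; this is exactly why the authors introduced that remark, so it should drop out of bookkeeping.
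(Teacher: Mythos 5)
Your high-level intuition — that $\mathcal{I}_\circ$ should be related to a Weyl-group symmetrizer and that the Weyl character formula should finish the job — is exactly right and is what the paper does. But the specific intertwining identity you propose as the engine of the proof is false, and the computation would not close even if it were true.

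Concretely, you claim $\mathcal{T}_i(\delta^{-1}g) = \delta^{-1}D_i g$, i.e. $\delta\,\mathcal{T}_i\,\delta^{-1} = D_i$ as operators, where $\delta = \prod_{\alpha^\vee\in\Phi^\vee_+}(1-v\mathbf{z}^{-\alpha^\vee})$. Using $s_i\delta = \delta\cdot\frac{1-v\mathbf{z}^{\alpha_i^\vee}}{1-v\mathbf{z}^{-\alpha_i^\vee}}$, a direct computation gives
\[
\delta\,\mathcal{T}_i\,\delta^{-1}\,g
= \frac{1-v}{\mathbf{z}^{\alpha_i^\vee}-1}\,g
+ \frac{v\mathbf{z}^{-\alpha_i^\vee}-1}{\mathbf{z}^{\alpha_i^\vee}-1}\cdot\frac{1-v\mathbf{z}^{-\alpha_i^\vee}}{1-v\mathbf{z}^{\alpha_i^\vee}}\,g(s_i\mathbf{z}),
\]
and the coefficient of $g$ still carries the factor $1-v$; no classical ($v$-free) Demazure operator $D_i$ has that coefficient, so no purely multiplicative conjugation can transform $\mathcal{T}_i$ into $D_i$. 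The identity that actually does the work is not a conjugation of $\mathcal{T}_i$ but a factorization of $1+\mathcal{T}_i$: one has $(1+\mathcal{T}_i) = (1-v\mathbf{z}^{-\alpha_i^\vee})\,\partial_i$ with $\partial_i = (1-\mathbf{z}^{-\alpha_i^\vee})^{-1}(1-\mathbf{z}^{-\alpha_i^\vee}s_i)$, satisfying $s_i\partial_i = \partial_i$. Feeding this into $\mathcal{I}^\circ = \sum_{s_iw>w}(1+\mathcal{T}_i)\mathcal{T}_w$ shows that $\widetilde{\mathcal{I}}^\circ := \delta^{-1}\mathcal{I}^\circ$ is $W$-invariant in the twisted group ring, which is a much weaker statement than your term-by-term conjugation but is exactly strong enough: one then writes $\widetilde{\mathcal{I}}^\circ = \sum_w({}^w\phi)\,w$ and pins down the unknown $\phi$ by extracting the $w_0$-coefficient, arriving at $\phi = \prod(1-\mathbf{z}^{-\alpha^\vee})^{-1}$ and hence the Weyl symmetrizer; your term-by-term approach gives you no handle on this coefficient.

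Secondly, even granting your intertwining identity, your plan requires evaluating $\bigl(\sum_w D_w\bigr)(\delta\,\mathbf{z}^\lambda)$, since you set $g = \delta\,\mathbf{z}^\lambda$ — but you then silently replace this by $\bigl(\sum_w D_w\bigr)(\mathbf{z}^\lambda)$, which is what the Demazure/Weyl character formula computes. Since $\delta$ is not a monomial and is not $W$-invariant, this substitution is not justified, and the final bookkeeping does not close. The paper avoids this entirely because its identity is an equality of elements of the twisted group ring $\mathcal{E}$, applied once at the end to the monomial $\mathbf{z}^\lambda$; there is no need to apply a symmetrizer to $\delta\,\mathbf{z}^\lambda$.

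In short: right target, right final ingredient (Weyl character formula), but the bridging lemma is false as stated, and the proof needs the $1+\mathcal{T}_i$ factorization plus the $w_0$-coefficient extraction rather than a conjugation of each $\mathcal{T}_i$.
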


\begin{proof}
  Let $R$ be the localization of the ring $\mathcal{O} (\widehat{T}(\mathbb{C}))$
  obtained by adjoining the
  inverses of $1 -\mathbf{z}^{\alpha^{\vee}}$ and also $1 -
  v\mathbf{z}^{\alpha^{\vee}}$ as $\alpha^{\vee}$ runs through the roots.
  We will work in the twisted group ring $\mathcal{E}$ of $W$ over $R$.
  Thus $\mathcal{E}$ is the free $R$-module spanned by $W$, and if $f \in R$,
  $w \in W$ then $w f w^{- 1} = {^w f}$. Elements of $\mathcal{E}$ may be
  interpreted as operators on meromorphic functions on $\widehat{T}(\mathbb{C})$
  in the obvious way.
  
  If $w \in W$ let $\mathcal{T}_w =\mathcal{T}_{i_1} \cdots \mathcal{T}_{i_k}$
  where $w = s_{i_1} \cdots s_{i_k}$ is a reduced expression, and
  $\mathcal{T}_i$ is the operator defined by (\ref{realdemazurewhittaker}).
  Thus we may regard $\mathcal{T}_i$ as the element
  \[ \mathcal{T}_i = \frac{1 - v}{\mathbf{z}^{\alpha^{\vee}_i} - 1} \cdot 1_W
     + \frac{v \mathbf{z}^{- \alpha^{\vee}_i} -
     1}{\mathbf{z}^{\alpha^{\vee}_i} - 1} \cdot s_i \]
  of $\mathcal{E}$. Let
  \[ \mathcal{I}^\circ = \sum_{w \in W} \mathcal{T}_w, \qquad \widetilde{\mathcal{I}}^\circ = \prod_{\alpha \in
     \Phi^+} (1 - v\mathbf{z}^{- \alpha})^{- 1} \mathcal{I}^\circ . \]

  Let us show that
  \begin{equation}
    \label{gamprewinv} w \widetilde{\mathcal{I}}^\circ = \widetilde{\mathcal{I}}^\circ
  \end{equation}
  for $w \in W$. It is sufficient to show this when $w = s_i$ is a simple
  reflection. Remembering that $s_i$ sends $\alpha_i^{\vee}$ to $-
  \alpha_i^{\vee}$ and permutes the remaining positive roots, this is
  equivalent to
  \begin{equation}
    \label{gamtranspro} s_i \mathcal{I}^\circ = \left( \frac{1 -
    v\mathbf{z}^{\alpha_i^{\vee}}}{1 - v\mathbf{z}^{- \alpha_i^{\vee}}}
    \right) \mathcal{I}^\circ .
  \end{equation}
  To prove this, we write
  \[ \mathcal{I}^\circ = \sum_{\scriptsize{\begin{array}{c}
       w \in W\\
       s_i w > w
     \end{array}}} (1 +\mathcal{T}_i) \mathcal{T}_w . \]
  From this, it is enough to prove that
  \begin{equation}
    \label{dtranspro} s_i (1 +\mathcal{T}_i) = \left( \frac{1 -
    v\mathbf{z}^{\alpha_i^{\vee}}}{1 - v\mathbf{z}^{- \alpha_i^{\vee}}}
    \right) (1 +\mathcal{T}_i) .
  \end{equation}
  Now
  \[ (1 +\mathcal{T}_i) = (1 - v\mathbf{z}^{- \alpha_i^{\vee}}) \partial_i,
     \qquad \partial_i = (1 -\mathbf{z}^{- \alpha_i^{\vee}})^{- 1} (1
     -\mathbf{z}^{- \alpha_i} s_i) \]
  and $s_i \partial_i = \partial_i$, from which (\ref{dtranspro}) and hence
  (\ref{gamtranspro}) may be checked. This proves (\ref{gamprewinv}).
  
  Let us write $\widetilde{\mathcal{I}}^\circ = \sum_{w \in W} \phi_w \cdot w$ with $\phi_w \in R$.
  By (\ref{gamprewinv}), we have $\phi_w = {^w \phi}$ where $\phi = 1_W$. 
  Let $w_0$ be the long Weyl group element. We will show that
  \begin{equation}
    \label{phiwopro} \phi_{w_0} = \prod_{\alpha \in \Phi^+} (1
    -\mathbf{z}^{\alpha_i^{\vee}})^{- 1} .
  \end{equation}
  To this end let us therefore compute the coefficient of $w_0$ in $\mathcal{I}^\circ$.
  Each $\mathcal{T}_w =\mathcal{T}_{i_1} \cdots \mathcal{T}_{i_k}$ can be
  multiplied out, selecting from each factor $\mathcal{T}_i$ 
  either $\frac{1 - v}{\mathbf{z}^{\alpha^{\vee}_i}
  - 1} \cdot 1_W$ or $\frac{v \mathbf{z}^{- \alpha^{\vee}_i} -
  1}{\mathbf{z}^{\alpha^{\vee}_i} - 1} \cdot s_i$. The only way we may get a
  contribution of $w_0$ is to take $w = w_0$ and select the second
  contribution each time. Denoting $H (\alpha^{\vee}) = \frac{v\mathbf{z}^{-
  \alpha^{\vee}} - 1}{\mathbf{z}^{\alpha^{\vee}} - 1}$ the contribution is
  \[ H (\alpha_{i_1}^{\vee}) H (s_{i_1} \alpha^{\vee}_{i_2}) H (s_{i_1}
     s_{i_2} \alpha^{\vee}_{i_3}) \cdots \; . \]
  But $\alpha^{\vee}_{i_1}, s_{i_1} (\alpha^{\vee}_{i_2}), \cdots$ is an
  enumeration of the positive roots. So this product is
  \[ \prod_{\alpha \in \Phi^+} \frac{1 - v\mathbf{z}^{- \alpha_i^{\vee}}}{1
     -\mathbf{z}^{\alpha_i^\vee}} . \]
  This is the coefficient in $\mathcal{I}^\circ$, and for $\widetilde{\mathcal{I}}^\circ$ we discard the
  numerator to obtain (\ref{phiwopro}). Therefore
  \[ \phi = \prod_{\alpha \in \Phi^+} (1 -\mathbf{z}^{- \alpha_i^{\vee}})^{-
     1}, \]
  \[ \mathcal{I}^\circ = \prod_{\alpha \in \Phi^+} (1 - v\mathbf{z}^{-
     \alpha_i^{\vee}}) \widetilde{\mathcal{I}}^\circ = \prod_{\alpha \in \Phi^+} (1 -
     v\mathbf{z}^{- \alpha_i^{\vee}}) \sum_{w \in W} { \left(
     \prod_{\alpha \in \Phi^+} (1 -\mathbf{z}^{- w\alpha_i^{\vee}})^{- 1}
     \right) w.} \]
  Thus if $\lambda$ is a dominant weight,
  \[ \mathcal{I}^\circ \mathbf{z}^{\lambda} = \prod_{\alpha \in \Phi^+} (1 -
     v\mathbf{z}^{- \alpha_i^{\vee}}) \left[ \sum_{w \in W} { \left(
     \prod_{\alpha \in \Phi^+} (1 -\mathbf{z}^{- w\alpha_i^{\vee}})^{- 1}
     \right) \mathbf{z}^{w \lambda}} \right], \]
  and using the Weyl character formula, the expression in brackets is
  $\chi_{\lambda} (\mathbf{z})$.
\end{proof}

We next explain how Theorem~\ref{abstractcs} is related to the
Casselman-Shalika formula. The right hand side of (\ref{abstractcsfo})
agrees with the Casselman-Shalika formula for the spherical vector in
the Whittaker model, but we have refrained from introducing particular vectors 
in the model.

To make the connection, we must work with the spherical and other 
Iwahori fixed vectors, and we must discuss our conventions. The conventions
chosen in Brubaker, Bump and Licata~{\cite{BrubakerBumpLicata}} were different from those in
Casselman and Shalika~{\cite{CasselmanShalika}}.\footnote{We are referring to the version
of~\cite{BrubakerBumpLicata} that was published in the Journal of Number
Theory -- the version in the arxiv uses conventions closer to
{\cite{CasselmanShalika}}.}  The conventions in {\cite{BrubakerBumpLicata}}
are chosen to keep the long Weyl group element $w_0$ out of the
formulas. This seems advantageous for reasons of elegance, and also for the
purposes of generalizing to the Kac-Moody case, where the Weyl group may be
infinite and have no longest element. To follow the conventions of
{\cite{BrubakerBumpLicata}}:
\begin{itemize}
  \item Principal series representations are induced, as usual, from the
  positive Borel subgroup $B$. However instead of the representation $M
  (\mathbf{z})$ we compute the Whittaker model of its contagradient $M
  (\mathbf{z}^{- 1})$.
  
  \item We take Whittaker models with respect to the negative unipotent
  subgroup $U^-$.
  
  \item We consider vectors fixed with respect to the negative Iwahori
  subgroup $J^-$. This is the preimage in the special maximal compact subgroup
  $G (\mathfrak{o})$ of the negative Borel subgroup $B^- (\mathbb{F}_q)$
  under the reduction mod $\mathfrak{p}$ map $G (\mathfrak{o}) \longrightarrow
  G (\mathbb{F}_q)$.
\end{itemize}

Let us adopt these same choices. In order to pass to the
contragredient, we wish to work with $\mathfrak{M} (\mathbf{z}^{- 1})$
instead of $\mathfrak{M} (\mathbf{z})$. This explains why we
conjugated by the map $f\mapsto f'$ in Remark~\ref{antisphericalremark}.
Following these conventions, the right-hand side of (\ref{abstractcsfo})
agrees with the Casselman-Shalika formula, even though we have not
yet made any reference to the spherical vector or indeed any Iwahori
fixed-vector in the Whittaker model.

Let us define particular $J^-$ fixed vectors in $M
(\mathbf{z}^{- 1})$. These vectors are parametrized by Weyl group elements and
are given by the formula
\[ \Phi_w^{\mathbf{z}^{- 1}} (b w' k) = \left\{ \begin{array}{ll}
     (\delta^{1 / 2} \chi_{\mathbf{z}^{- 1}}) (b) & \text{if $w = w'$,}\\
     0 & \text{otherwise,}
   \end{array} \right. \]
for $b \in B (F)$, $w$ and $w'$ Weyl group elements, and $k \in J^-$. Here, by
abuse of notation, we use the same notation $w'$ for a representative of the
Weyl group element $w'$, chosen in $G (\mathfrak{o})$. The definition does not
depend on this choice because $\chi_{\mathbf{z}^{- 1}}$ is unramified.

We have noted that $T (F) / T (\mathfrak{o}) \cong \Lambda$. If
$\lambda \in \Lambda$, let $t_{\lambda}$ denote a representative in $T
(F)$ of the coset mod $T (\mathfrak{o})$ corresponding to $\lambda$ under this
isomorphism.

\begin{proposition}
  \label{phioneiwahori}
  We have
  \[ \omega_{\psi}^{\mathbf{z}^{- 1}} (\pi_{\mathbf{z}^{- 1}} (t_{-
     \lambda}) \Phi_1^{\mathbf{z}^{- 1}}) = \left\{ \begin{array}{ll}
       \delta^{1 / 2} (t_{\lambda})\,\mathbf{z}^{\lambda} & \text{if
       $\lambda$ is dominant,}\\
       0 & \text{otherwise.}
     \end{array} \right. \]
\end{proposition}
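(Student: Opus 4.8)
The plan is to compute the Whittaker integral $\omega_\psi^{\mathbf{z}^{-1}}(\pi_{\mathbf{z}^{-1}}(t_{-\lambda})\Phi_1^{\mathbf{z}^{-1}})$ directly from its definition as an integral over $U^-(F)$. Recall that $\omega_\psi^{\mathbf{z}^{-1}}(f) = \int_{U^-(F)} f(ug)\,\overline{\psi(u)}\,du$, so we must evaluate $\int_{U^-(F)} \Phi_1^{\mathbf{z}^{-1}}(u\,t_{-\lambda})\,\overline{\psi(u)}\,du$. The key geometric input is the support of $\Phi_1^{\mathbf{z}^{-1}}$: by definition it is supported on $B(F)\,J^-$, the big cell in the Iwahori decomposition corresponding to the identity Weyl element, and it transforms on the left by $\delta^{1/2}\chi_{\mathbf{z}^{-1}}$ under $B(F)$.

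First I would reduce the integral to a sum over $U^-(F)/U^-(\mathfrak{o})$, or rather recognize that the integrand $u \mapsto \Phi_1^{\mathbf{z}^{-1}}(u\,t_{-\lambda})$ is right-$U^-(\mathfrak{o})$-invariant (since $U^-(\mathfrak{o}) \subseteq J^-$ and $t_{-\lambda}$ normalizes $U^-(F)$, conjugating $U^-(\mathfrak{o})$ into itself in a controlled way — actually $t_{-\lambda}U^-(\mathfrak{o})t_\lambda$ may be larger, so one must be a little careful here). The cleaner route: conjugate $t_{-\lambda}$ past $u$, writing $u\,t_{-\lambda} = t_{-\lambda}\,(t_\lambda u\, t_{-\lambda})$, and use that left-translation by $t_{-\lambda} \in T(F)$ picks up the factor $(\delta^{1/2}\chi_{\mathbf{z}^{-1}})(t_{-\lambda}) = \delta^{-1/2}(t_\lambda)\mathbf{z}^{\lambda}$. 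After the change of variable $u \mapsto t_\lambda u t_{-\lambda}$ in the integral (which scales the Haar measure by $\delta_{U^-}(t_\lambda)$, a modulus factor that combines with $\delta^{-1/2}$), one is left with $\int_{U^-(F)} \Phi_1^{\mathbf{z}^{-1}}(t_{-\lambda} u)\,\overline{\psi(t_{-\lambda}ut_\lambda)}\,\cdots$ — the twist of $\psi$ by conjugation is the essential mechanism.

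The heart of the argument is then the standard Casselman–Shalika-type local computation: $\Phi_1^{\mathbf{z}^{-1}}(t_{-\lambda}u)$ is nonzero precisely when $t_{-\lambda}u \in B(F)J^-$, equivalently $u \in t_\lambda B(F) J^- $; intersecting with $U^-(F)$ and using the Iwahori factorization one finds this forces $u$ to lie in a bounded region, and on that region $\Phi_1^{\mathbf{z}^{-1}}(t_{-\lambda}u)$ is constant. The character $\psi$ restricted to that region, together with the conjugation twist, integrates to $0$ unless $\lambda$ is dominant: when $\langle\alpha_i^\vee,\cdot\rangle$ forces the conjugated one-parameter subgroups to "spread out," orthogonality of the additive character $\psi$ against the constant function kills the integral, and this non-vanishing is exactly the dominance condition on $\lambda$ (since $\psi\circ x_{-\alpha_i^\vee}$ is trivial on $\mathfrak{o}$ but not on $\mathfrak{p}^{-1}$). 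When $\lambda$ is dominant, the region is exactly $U^-(\mathfrak{o})$ (up to measure normalization), $\psi$ is trivial there, and the integral evaluates to the volume, normalized to $1$, times the accumulated modulus and character factors, giving $\delta^{1/2}(t_\lambda)\mathbf{z}^\lambda$.

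I expect the main obstacle to be bookkeeping the modulus characters correctly: one must track $\delta_B$, the modulus $\delta_{U^-}$ of conjugation on $U^-(F)$, and the $\delta^{1/2}$ in the induction, verifying that they conspire to leave exactly $\delta^{1/2}(t_\lambda)$ (not $\delta^{\pm 1}$ or $1$). A subtle point is the direction of dominance: because we work with $M(\mathbf{z}^{-1})$, with $U^-$ rather than $U$, with $J^-$, and translate by $t_{-\lambda}$ rather than $t_\lambda$, all the sign conventions must be lined up so that "nonzero" corresponds to "$\lambda$ dominant" and not "$\lambda$ antidominant"; this is precisely the convention matching explained in the paragraphs preceding the proposition. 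The alternative, cleaner strategy — cited in the excerpt as available — is simply to invoke the relevant computation in Casselman–Shalika~\cite{CasselmanShalika} (e.g.\ their evaluation of the Whittaker function on torus elements) after translating conventions; if one takes that route the "proof" reduces to a dictionary between the $\Phi_w$ normalization here and the Iwahori-fixed vectors there, plus the remark that $\Phi_1$ corresponds to the vector supported on the big cell.
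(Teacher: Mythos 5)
The paper's own ``proof'' is a single-line citation: it says \emph{This is Proposition 1 in} \cite{BrubakerBumpLicata}, and gives no argument. Your proposal, by contrast, actually carries out the underlying Whittaker-integral computation, which is what the cited reference does; so you are proving the result rather than outsourcing it, and your sketch is essentially correct. The mechanism you identify is the right one: factor $u\,t_{-\lambda}=t_{-\lambda}(t_\lambda u\,t_{-\lambda})$, pull out $(\delta^{1/2}\chi_{\mathbf z^{-1}})(t_{-\lambda})=\delta^{-1/2}(t_\lambda)\mathbf z^\lambda$ by left $B$-equivariance of $\Phi_1$, change variables so that the Jacobian contributes $\delta_B(t_\lambda)$, and the product $\delta^{-1/2}(t_\lambda)\delta_B(t_\lambda)=\delta^{1/2}(t_\lambda)$ gives the stated scalar. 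After the substitution the domain where $\Phi_1$ is nonzero is exactly $U^-(\mathfrak o)$ (using $B(F)J^-\cap U^-(F)=U^-(\mathfrak o)$, which follows from the Iwahori factorization of $J^-$), and the dominance condition is detected entirely by the twisted character $u\mapsto\overline{\psi(t_{-\lambda}ut_\lambda)}$: for $\lambda$ dominant every simple negative root coordinate is contracted into $\mathfrak o$, where $\psi$ is trivial, while otherwise at least one coordinate spreads into $\mathfrak p^{-1}$ and orthogonality of characters kills the integral.

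One small imprecision to tighten: in the passage where you intersect $t_\lambda B(F)J^-$ with $U^-(F)$, note that $t_\lambda\in B(F)$ so $t_\lambda B(F)J^-=B(F)J^-$ and the intersection is always $U^-(\mathfrak o)$, \emph{independent} of $\lambda$; the $\lambda$-dependence lives entirely in the conjugated character, not in the support region. (Equivalently, without the change of variables the support \emph{is} $t_{-\lambda}U^-(\mathfrak o)t_\lambda$ and $\psi$ is the original character; the two bookkeepings are the same, but your phrasing currently mixes them.) This is only an exposition issue; the conclusion and all the scalar factors come out as claimed. Your closing remark, that one may instead simply cite the relevant Iwahori-fixed-vector evaluation in \cite{CasselmanShalika} or \cite{BrubakerBumpLicata} after matching conventions, is precisely what the paper does.
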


\begin{proof}
  This is Proposition~1 in {\cite{BrubakerBumpLicata}}.
\end{proof}

Our point of view in Theorem~\ref{antisphericalthm} was that
$\mathbf{z}^\lambda$ is a $W$-invariant section of a vector bundle
$\mathfrak{M}(\mathbf{z})$. So in Remark~\ref{antisphericalremark},
it is a $W$-invariant section of the vector bundle
$\mathfrak{M}(\mathbf{z}^{-1})$. It is to be remembered
that $\mathcal{M}(\mathbf{z}^{-1})$ is the one-dimensional space of
coinvariants for the Whittaker model; it is a quotient of
the principal series representation $M(\mathbf{z}^{-1})$.
Project the section $\mathbf{z}^\lambda$ onto the summand
$\mathcal{M}(\mathbf{z}^{-1})$. If $\lambda$ is dominant,
Proposition~\ref{phioneiwahori} gives a way of lifting this projection
to $M(\mathbf{z}^{-1})$, namely we take the representative
$\delta^{-1/2}(t_\lambda)\pi_{\mathbf{z}^{-1}}(t_{-\lambda}\Phi_1^{\mathbf{z}^{-1}})$.

Now let $\Phi_\circ^{\mathbf{z}^{-1}}$ be the spherical vector
\[\Phi_\circ^{\mathbf{z}}(b k)=
(\delta^{1 / 2} \chi_{\mathbf{z}^{- 1}}) (b)\]
for $b\in B(F)$ and $k\in G(\mathfrak{o})$. We may now
deduce the usual form of the Casselman-Shalika formula
from Theorem~\ref{abstractcs}. The proof depends on a
calculation from~\cite{BrubakerBumpLicata} that in
turn depends on results of \cite{CasselmanSpherical}.

\begin{theorem}[\cite{CasselmanShalika}]
  Let $\lambda$ be a dominant weight. Then
  \[\omega_{\psi}^{\mathbf{z}^{-1}}(\pi_{\mathbf{z}^{-1}}(t_{-\lambda})\Phi_\circ^{\mathbf{z}^{-1}})=
  \delta^{1/2}(t_\lambda)\,\prod_{\alpha^\vee \in \Phi^{\vee}_+}\left(1
    - v\mathbf{z}^{- \alpha^\vee}\right)
\chi_{\lambda}(\mathbf{z}).\]
\end{theorem}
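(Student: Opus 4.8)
The plan is to deduce the classical Casselman--Shalika formula from the abstract version in Theorem~\ref{abstractcs} by relating the spherical vector $\Phi_\circ^{\mathbf{z}^{-1}}$ to the Iwahori-fixed vectors $\Phi_w^{\mathbf{z}^{-1}}$, and then using the fact that the abstract formula was set up precisely so that the modified action of the idempotent $I^\circ$ computes $\omega_\psi$ applied to the spherical vector. Concretely, the spherical vector decomposes in the Iwahori-fixed space as $\Phi_\circ^{\mathbf{z}^{-1}} = \sum_{w\in W} \Phi_w^{\mathbf{z}^{-1}}$, since evaluating both sides on $bw'k$ with $b\in B(F)$, $w'\in W$, $k\in G(\mathfrak{o})$ gives $(\delta^{1/2}\chi_{\mathbf{z}^{-1}})(b)$ (the double coset decomposition $G(F) = \bigsqcup_{w'} B(F)w'J^-$ refines the Iwasawa decomposition).

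First I would make precise the correspondence between $\omega_\psi^{\mathbf{z}^{-1}}$ applied to translates of Iwahori vectors and the Hecke action on $\mathcal{O}(\widehat{T}(\C))$ from Remark~\ref{antisphericalremark}. The key is that in the model $\mathfrak{M}(\mathbf{z}^{-1})$, the operator $\mathfrak{T}_i$ is built from the intertwining operators $\Ai_{s_i}$, and by Proposition~\ref{magicfactor} these act on Whittaker coinvariants exactly by the scalars $\mathcal{A}_{s_i}^{\mathbf{z}}$; passing to the conjugated action of Remark~\ref{antisphericalremark} matches the conventions of \cite{BrubakerBumpLicata} (contragredient $M(\mathbf{z}^{-1})$, Whittaker model along $U^-$, $J^-$-fixed vectors). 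Under the identification of $\mathcal{O}(\widehat{T}(\C))$ with $W$-invariant sections, the monomial $\mathbf{z}^\lambda$ projects onto the $\mathcal{M}(\mathbf{z}^{-1})$-summand to the Whittaker value $\omega_\psi^{\mathbf{z}^{-1}}(\delta^{-1/2}(t_\lambda)\pi_{\mathbf{z}^{-1}}(t_{-\lambda})\Phi_1^{\mathbf{z}^{-1}})$, which by Proposition~\ref{phioneiwahori} equals $\mathbf{z}^\lambda$ for dominant $\lambda$ (and $0$ otherwise) --- consistent with the normalization. The action of $T_i$ on these basis vectors in the convolution picture corresponds to the right-translation convolution action on $M(\mathbf{z}^{-1})^{J^-}$, so applying the idempotent $I^\circ = \sum_w T_w$ to $\Phi_1^{\mathbf{z}^{-1}}$ (suitably translated) produces precisely $\Phi_\circ^{\mathbf{z}^{-1}}$ up to the known scalars, by the standard fact that $I^\circ$ projects onto the spherical line.

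Assembling these, I would write $\omega_\psi^{\mathbf{z}^{-1}}(\pi_{\mathbf{z}^{-1}}(t_{-\lambda})\Phi_\circ^{\mathbf{z}^{-1}}) = \sum_{w\in W}\omega_\psi^{\mathbf{z}^{-1}}(\pi_{\mathbf{z}^{-1}}(t_{-\lambda})\Phi_w^{\mathbf{z}^{-1}})$, identify the right-hand side with $\delta^{1/2}(t_\lambda)$ times the value of $\mathcal{I}_\circ(\mathbf{z}^\lambda)$ in the modified action (this is where the calculation from \cite{BrubakerBumpLicata} resting on \cite{CasselmanSpherical} enters, giving the translation of $\sum_w T_w$ acting on the Iwahori basis into the operator $\mathcal{I}^\circ$ acting on monomials), and then invoke Theorem~\ref{abstractcs} to evaluate $\mathcal{I}_\circ(\mathbf{z}^\lambda) = \prod_{\alpha^\vee\in\Phi^\vee_+}(1-v\mathbf{z}^{-\alpha^\vee})\chi_\lambda(\mathbf{z})$. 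The factor $\delta^{1/2}(t_\lambda)$ is carried along from the normalization in Proposition~\ref{phioneiwahori}.

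The main obstacle I anticipate is the bookkeeping in the second step: verifying that the \emph{convolution} action of $T_i$ on the Iwahori-fixed vectors $\Phi_w^{\mathbf{z}^{-1}}$, after applying $\omega_\psi^{\mathbf{z}^{-1}}\circ\pi_{\mathbf{z}^{-1}}(t_{-\lambda})$, translates into exactly the Demazure--Whittaker operator $\mathcal{T}_i$ of (\ref{realdemazurewhittaker}) acting on the monomial $\mathbf{z}^\lambda$ in $\mathcal{O}(\widehat{T}(\C))$. This compatibility is essentially the content of the cited computation in \cite{BrubakerBumpLicata} (their Proposition~1 and surrounding analysis, ultimately relying on Casselman's results in \cite{CasselmanSpherical} on the action of $T_i$ on the Iwahori-fixed vectors in the principal series); once that dictionary is in place, the sum $I^\circ = \sum_w T_w$ on the Iwahori side becomes $\mathcal{I}^\circ = \sum_w \mathcal{T}_w$ on the function side, and Theorem~\ref{abstractcs} finishes the argument.
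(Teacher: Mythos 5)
Your proposal is correct and follows essentially the same route as the paper: decompose $\Phi_\circ^{\mathbf{z}^{-1}} = \sum_{w\in W}\Phi_w^{\mathbf{z}^{-1}}$, use the compatibility from \cite{BrubakerBumpLicata} (ultimately resting on \cite{CasselmanSpherical}) to express $\omega_\psi^{\mathbf{z}^{-1}}(\pi_{\mathbf{z}^{-1}}(t_{-\lambda})\Phi_w^{\mathbf{z}^{-1}})$ as $\mathcal{T}_w$ applied to the base case given by Proposition~\ref{phioneiwahori}, sum to get $\mathcal{I}^\circ$, and invoke Theorem~\ref{abstractcs}. The only imprecision is the aside that ``$I^\circ$ projects onto the spherical line'' applied to $\Phi_1$ gives $\Phi_\circ$ --- the paper avoids that claim, instead using the identity $\Phi_\circ=\sum_w\Phi_w$ of functions directly --- but your assembled argument in the final paragraph does exactly the latter, so the substance matches.
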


\begin{proof}
  If $w\in W$ let $\mathcal{T}_w$ denote the image of $T_w$ in the
  representation of Remark~\ref{antisphericalremark}, so that
  $\mathcal{T}_{s_i}=\mathcal{T}_i$. It follows from 
  Theorem~1 of \cite{BrubakerBumpLicata} that if $s_iw>w$
  then
  \[
  \omega_{\psi}^{\mathbf{z}^{-1}}(\pi_{\mathbf{z}^{-1}}(t_{-\lambda})\Phi_{s_iw}^{\mathbf{z}^{-1}})=
  \mathcal{T}_i\,\omega_{\psi}^{\mathbf{z}^{-1}}(\pi_{\mathbf{z}^{-1}}(t_{-\lambda})\Phi_{w}^{\mathbf{z}^{-1}}).
  \]
  Therefore
  \[
  \omega_{\psi}^{\mathbf{z}^{-1}}(\pi_{\mathbf{z}^{-1}}(t_{-\lambda})\Phi_{w}^{\mathbf{z}^{-1}}) =
  \mathcal{T}_w\,\omega_{\psi}^{\mathbf{z}^{-1}}(\pi_{\mathbf{z}^{-1}}(t_{-\lambda})\Phi_{1}^{\mathbf{z}^{-1}})
  \]
  and since $\mathcal{I}^\circ=\sum_w\mathcal{T}_w$ and
  $\Phi_\circ^{\mathbf{z}^{-1}}=\sum_w\Phi_w^{\mathbf{z}^{-1}}$ it follows
  that 
  \[\mathcal{I}^\circ\omega_{\psi}^{\mathbf{z}^{-1}}(\pi_{\mathbf{z}^{-1}}(t_{-\lambda})\Phi_{1}^{\mathbf{z}^{-1}})
  =\omega_{\psi}^{\mathbf{z}^{-1}}(\pi_{\mathbf{z}^{-1}}(t_{-\lambda})\Phi_{\circ}^{\mathbf{z}^{-1}}).\]
  Applying Theorem~\ref{abstractcs} and Proposition~\ref{phioneiwahori}, the result follows.
\end{proof}

We emphasize that the principles of this section are applicable to other functionals. Let us
consider the {\textit{spherical functional}} $\sigma_{\mathbf{z}} : M
(\mathbf{z}) \longrightarrow \mathbb{C}$ defined by
\[ \sigma_{\mathbf{z}} (\phi) = \int_{K^{\circ}} \phi (k) \, d k. \]
Analogous to Proposition~\ref{magicfactor}, we have
\[ \sigma_{s_i \mathbf{z}} \circ \Ai_{s_i}^{\mathbf{z}} = \frac{1
   - v\mathbf{z}^{\alpha_i^{\vee}}}{1 -\mathbf{z}^{\alpha_i^{\vee}}}
   \sigma_{\mathbf{z}} . \]
This may be deduced from Theorem 3.1 of~{\cite{CasselmanSpherical}}.

Now we make statements analogous to those for the Whittaker functional in Theorem~\ref{antisphericalthm}. Thus
let $\mathcal{M} (\mathbf{z})$ be the module of coinvariants for the
spherical functional and construct the module $\mathfrak{M} (\mathbf{z})$,
which we regard as the module of sections of a vector bundle over
$\Treg(\mathbb{C})$. As before, this bundle is equivariant
for the action of the Weyl group, and if we take the module of invariant
sections, these extend to all of $\hat{T} (\mathbb{C})$. We thus obtain the
representation of $\tilde{\mathcal{H}}_v$ on $\mathcal{O} (\hat{T}
(\mathbb{C}))$ in which
\[ T_i f (\mathbf{z}) = \frac{(1 - v) \mathbf{z}^{\alpha^{\vee}_i}}{1
   -\mathbf{z}^{\alpha_i^{\vee}}} f (\mathbf{z}) + \frac{1 -
   v\mathbf{z}^{- \alpha_i^{\vee}}}{1 -\mathbf{z}^{-\alpha_i^{\vee}}} f(s_i \mathbf{z}),
   \qquad \theta_{\lambda} f (\mathbf{z})=\mathbf{z}^{\lambda} f (\mathbf{z}) . \]
The constant function $f (\mathbf{z}) = 1$ is now invariant under the $T_i$,
and $T_i \cdot 1 = v$. So this representation is induced from the
one-dimensional trivial module corresponding to the character $T_w \mapsto
v^{\ell (w)}$ of $\mathcal{H}_v$, where $\ell$ is the length function on the
Weyl group.

Conjugating this representation by the map $\mathbf{z} \mapsto
-\mathbf{z}$ as in the previous section gives the action
\begin{equation}
  \label{demazurelusztig} T_i f (\mathbf{z}) = \frac{f (\mathbf{z}) - f
  (s_i z)}{\mathbf{z}^{\alpha_i^{\vee}} - 1} - v \frac{f (\mathbf{z})
  -\mathbf{z}^{\alpha_i^{\vee}} f (s_i
  \mathbf{z})}{\mathbf{z}^{\alpha_i^{\vee}} - 1}, \qquad \theta_{\lambda}
  f (\mathbf{z}) =\mathbf{z}^{- \lambda} f (\mathbf{z}) .
\end{equation}
This is the representation defined by Lusztig~{\cite{LusztigEquivariant}}, and the
operators (\ref{demazurelusztig}) are called {\textit{Demazure-Lusztig
operators}}. See Ion~\cite{IonMatrix} and Cherednik and Ma~\cite{CheredikMaI, CherednikMaII}
for connections with the double affine Hecke algebra and nonsymmetric Macdonald polynomials.

\section{Hecke Modules from $R$-matrices}\label{HeckeRM}

{\textit{Schur-Weyl duality}} refers to the commuting representations of the
symmetric group $S_r$, or its group algebra,
and thes general linear group $\GL(n,\C)$, or its enveloping algebra
$U (\mathfrak{g}\mathfrak{l} (n))$, on $\otimes^r V$ where $V$ is a $\GL(n,\C)$-module. 
In this section $V$ is the standard module $\mathbb{C}^n$. This tensor product representation is multiplicity-free in a strong sense and
\[\otimes^r V\cong \oplus_\lambda \,\,\pi_\lambda^{S_r}\otimes\pi_\lambda^{\GL(n)}\]
as $S_r\otimes\GL(n,\C)$-modules, where $\lambda$ runs through
the partitions of $r$ of length $\leqslant n$. Here $\pi_\lambda^{S_r}$ is
the irreducible Specht module of $S_r$ for the partition $\lambda$,
and $\pi_\lambda^{\GL(n)}$ is the irreducible $\GL(n,\C)$-module
with highest weight~$\lambda$.

The Hecke algebra (of type A) $\mathcal{H}_v$ can be thought of as a deformation of the group algebra of the symmetric group $S_r$. One can also deform $U (\mathfrak{g}\mathfrak{l} (n))$ to obtain 
$U_q (\mathfrak{g}\mathfrak{l} (n))$, a quasitriangular Hopf algebra with the
same Chevalley generators as $U (\mathfrak{g}\mathfrak{l} (n))$ subject to the
$quantum$ Serre relations. The definition of $U_{\sqrt{v}} (\mathfrak{g}\mathfrak{l} (n))$ is standard and can be found in chapter 9 in Chari and Pressley~\cite{ChariPressleyBook}, so we will not mention it. We do point out that when $q$ is a root of unity, we use Lusztig's quantum group definition. We will also consider the affine quantum group $U_{\sqrt{v}}(\widehat{\mathfrak{gl}}(n))$ in this paper, which is a quantization of a central extension of the loop algebra of $\mathfrak{gl}(n)$; for its formal definition see Section 12.2 in \cite{ChariPressleyBook}.  

Jimbo {\cite{JimboHecke}} generalized Schur-Weyl duality by showing
that $\mathcal{H}_v$ and $U_{\sqrt{v}} (\mathfrak{g}\mathfrak{l}
(n))$ also have commuting representations on $\otimes^r V$. Here we
have taken the deformation parameter called $q$ in \cite{JimboHecke} to be a
square root of a parameter $v$, for later comparison with
the results of Theorems~\ref{hecketheorem} and~\ref{hecketheoremaffine}.

To explain Jimbo's generalization of Schur-Weyl duality,
 we recall some solutions of the Yang-Baxter equation,
related to the $R$-matrices for the quantum groups
$U_{\sqrt{v}}(\mathfrak{g}\mathfrak{l} (n))$ and
$U_{\sqrt{v}}(\widehat{\mathfrak{g}\mathfrak{l}} (n))$.
If $n = 2$, these are due to Baxter; for general $n$, they are due to
Jimbo. Let $M = \Mat_n (\mathbb{C})$. Let $e_{i j}$ be the elementary $n \times n$
matrix with 1 in the $(i, j)$-th position, and $0$'s elsewhere. Let
\begin{equation} \label{glnRmatrix} R = \sum_i {\sqrt{v}} \, e_{i i} \otimes e_{i i} + \sum_{i \neq j} e_{i i} \otimes
   e_{j j} + \bigl({\sqrt{v}} -\frac1{\sqrt{v}}\bigr) \sum_{i > j} e_{i j} \otimes e_{j i} 
\end{equation}
in $M \otimes M$. This is a solution to the Yang-Baxter equation
\begin{equation}
  \label{glnybe} R_{12} R_{13} R_{23} = R_{23} R_{13} R_{12},
\end{equation}
where the meaning of the subscripts is as follows. Regard $R$ as an
endomorphism of $V \otimes V$. 
Then $R_{i j}$ is the
endomorphism of $V \otimes V \otimes V$ in which the first tensor factor of
$R_{i j}$ acts on the $i$-th tensor factor of $V \otimes V \otimes V$, and the
second tensor factor acts on the $j$-th tensor factor, with the identity
matrix acting on the remaining copy of $V$.

The vector space $V$ may be regarded as the standard $n$-dimensional
module in the tensor category of $U_{\sqrt{v}} (\mathfrak{g}\mathfrak{l} (n))$-modules.
Then the map $\tau R :V \otimes V \longrightarrow V \otimes V$ is a morphism in
this category, where $\tau:V\otimes V\to V\otimes V$
is the map $\tau (x \otimes y) = y \otimes x$. The Yang-Baxter equation (\ref{glnybe}) is a reflection of
the braidedness of the category. See {\cite{Drinfeld}}. Let $1\leqslant
i\leqslant r-1$, and let $(\tau R)_{i,i+1}$ denote the endomorphism
$I_{\otimes^{i-1}V}\otimes (\tau R)\otimes I_{\otimes^{r-1-i}V}$ of $\otimes^r V$.

\begin{proposition}[\cite{JimboHecke}]
  \label{JimboHeckeTheorem}
  We have commuting actions of
  $U_{\sqrt{v}}
  (\mathfrak{g}\mathfrak{l} (n))$ and $\mathcal{H}_v$ on $\otimes^r V$
  in which $T_i$ acts by $\sqrt{v}\,(\tau R)_{i, i + 1}$.
\end{proposition}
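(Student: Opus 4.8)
The plan is to verify that the operator $\sqrt{v}\,(\tau R)_{i,i+1}$ satisfies the defining relations of $\mathcal{H}_v$ --- namely the quadratic relation $T_i^2 = (v-1)T_i + v$ and the braid relations --- and then separately that it commutes with the $U_{\sqrt{v}}(\mathfrak{gl}(n))$-action. Since everything is local, for the braid relations it suffices to work on $\otimes^3 V$ and for the quadratic relation on $\otimes^2 V$; the $I$-padding in $(\tau R)_{i,i+1}$ makes the reduction to two or three tensor factors automatic, and the far-commutativity $(\tau R)_{i,i+1}(\tau R)_{j,j+1} = (\tau R)_{j,j+1}(\tau R)_{i,i+1}$ for $|i-j|\geq 2$ is immediate since the operators act on disjoint tensor factors.

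First I would check the quadratic relation. Writing $\check{R} := \tau R$ as an endomorphism of $V\otimes V$, one computes $\check{R}$ on the basis $e_a\otimes e_b$ directly from \eqref{glnRmatrix}: on $e_a\otimes e_a$ it acts by $\sqrt{v}$, it swaps $e_a\otimes e_b \mapsto e_b\otimes e_a$ for $a<b$, and for $a>b$ it sends $e_a\otimes e_b \mapsto e_b\otimes e_a + (\sqrt{v}-\tfrac1{\sqrt v})\,e_a\otimes e_b$. From this one sees that $\check{R}$ has eigenvalue $\sqrt{v}$ on the "symmetric-type" span (the $e_a\otimes e_a$ and the vectors $e_a\otimes e_b + e_b\otimes e_a$ suitably corrected) and eigenvalue $-\tfrac1{\sqrt v}$ on the complementary "antisymmetric-type" space, so $\check{R}$ satisfies $(\check R - \sqrt v)(\check R + \tfrac1{\sqrt v}) = 0$. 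Setting $T := \sqrt v\,\check R$ this becomes $(T - v)(T+1) = 0$, i.e. $T^2 = (v-1)T + v$, which is exactly \eqref{quadratic}. This also shows $T$ is invertible, which is all we need from the eigenvalue analysis.

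Next I would address the braid relation $T_iT_{i+1}T_i = T_{i+1}T_iT_{i+1}$, which on $\otimes^3 V$ amounts to $\check{R}_{12}\check{R}_{23}\check{R}_{12} = \check{R}_{23}\check{R}_{12}\check{R}_{23}$. This is the standard translation of the Yang--Baxter equation \eqref{glnybe} into its "braid form": writing $\check R = \tau R$ and using $R_{ij}R_{ik}R_{jk} = R_{jk}R_{ik}R_{ij}$ together with the elementary identities relating $\check R_{12}$, $\check R_{23}$ to the $R_{ij}$ via permutation of tensor factors (e.g. $\check R_{12} = P_{12}R_{12}$ with $P_{12}$ the transposition operator), one obtains the hexagon/braid identity for $\check R$. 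I would cite \cite{Drinfeld} or simply state this as the well-known passage between the $R$-form and the $\check R$-form of the Yang--Baxter equation; the scalar $\sqrt v$ cancels since it appears the same number of times ($3$) on both sides.

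Finally, for commutation with $U_{\sqrt v}(\mathfrak{gl}(n))$: by construction $\tau R : V\otimes V \to V\otimes V$ is a morphism of $U_{\sqrt v}(\mathfrak{gl}(n))$-modules (this is the braiding of the quasitriangular structure, as recalled just before the statement), hence so is each $(\tau R)_{i,i+1}$ on $\otimes^r V$ with the coproduct action; therefore the entire $\mathcal{H}_v$-action commutes with the $U_{\sqrt v}(\mathfrak{gl}(n))$-action. The main obstacle is really just the bookkeeping in translating \eqref{glnybe} into the braid relation for $\check R$ and confirming the eigenvalues of $\check R$ give precisely the quadratic relation with parameter $v$ (as opposed to some other normalization); both are routine but must be done with the conventions of \eqref{glnRmatrix} in hand. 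Since this is Jimbo's theorem \cite{JimboHecke}, I would keep the verification brief and reference the original.
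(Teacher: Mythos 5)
Your proposal is correct and follows essentially the same route as the paper: check the quadratic relation on $\tau R$ directly, observe the braid relation is the Yang--Baxter equation, and note commutation with $U_{\sqrt v}(\mathfrak{gl}(n))$ because $\tau R$ is a morphism of modules. One small slip to fix: with the conventions of \eqref{glnRmatrix} the roles of $a<b$ and $a>b$ in your description of $\check R$ are reversed (it is $e_a\otimes e_b$ with $a<b$ that picks up the extra $(\sqrt v - 1/\sqrt v)\,e_a\otimes e_b$ term, and $a>b$ that is the pure swap), but since the resulting $2\times 2$ block has the same characteristic polynomial either way, the eigenvalue conclusion and hence the quadratic relation are unaffected.
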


\begin{proof}
  In keeping with the subscript convention explained above, $(\tau R)_{i, i +
  1}$ is $\tau R$ applied to the $i$ and $i + 1$ copies of $V$, with the
  identity transformation on the other copies. Since this is an intertwining
  operator of $U_{\sqrt{v}} (\mathfrak{g}\mathfrak{l} (n))$-modules, it commutes with
  the action of $U_{\sqrt{v}} (\mathfrak{g}\mathfrak{l} (n))$, but we need to check
  that the $T_i$ satisfy the defining relations for $\mathcal{H}_v$.
  The quadratic relation
  \[T^2 = (v-1)T + v\]
  for
  \[ T = \sqrt{v}\,\tau R = \sum_i v \, e_{i i} \otimes e_{i i} + \sqrt{v} \sum_{i \neq j} e_{j i}
     \otimes e_{i j} + (v-1) \sum_{i > j} e_{j j} \otimes e_{i i} \]
  is easily checked, and the braid relation when $n(i,j)=3$ is the Yang-Baxter equation.
\end{proof}

The affine quantum group $U_{\sqrt{v}}(\widehat{\mathfrak{g}\mathfrak{l}} (n))$ 
has an evaluation standard module $V_x$ for every $x \in \mathbb{C}^{\times}$. As a vector space, $V_x$ is the same as $V$ (in particular we choose the same basis for $V_x$ as we do for $V$). If $x \in \mathbb{C}^{\times}$, let 
\begin{equation}\label{glnhatRmatrix}
R (x) = R - x R_{21}^{- 1},
\end{equation}
where $R_{21}:V\otimes V\to V\otimes V$ acts by $\tau R\tau$ on the indicated copies of $V$.
Then Jimbo
{\cite{JimboToda}} proved that for each $x \in \mathbb{C}^{\times}$, $\tau R(xy^{-1}) : V_{x} \otimes V_y \to V_{y} \otimes V_{x}$ is a $U_{\sqrt{v}}(\widehat{\mathfrak{g}\mathfrak{l}} (n))$-module homomorphism and that $R(x)$ is a solution to the {\it parametrized} Yang-Baxter equation:
\begin{equation}
  \label{glnhatybe} R_{12} (x) R_{13} (x y) R_{23} (y) = R_{23} (y) R_{13} (x
  y) R_{12} (x).
\end{equation}

\begin{remark}
  The linear expression (\ref{glnhatRmatrix}) for the affine $R$-matrix $R(x)$ in terms of $R$
  is only valid for Cartan type~$A$. See~\cite{JimboToda} for expressions for
  the other affine types that are quadratic or quartic in~$x$.
\end{remark}

The endomorphism $\tau R(xy^{-1})\,\tau R((xy^{-1})^{-1})$ of $V_y \otimes V_x$ is a scalar. More
precisely,

\begin{equation}
  \label{doubler}
  \tau R((xy^{-1})\circ \tau R(((xy^{-1})^{-1})=\bigl(\sqrt{v}-\frac{xy^{-1}}{\sqrt{v}}\bigr)\bigl(\sqrt{v}-\frac{1}{xy^{-1}\sqrt{v}}\bigr)\,.
\end{equation}

We may now give an example of the situation in
Theorems~\ref{hecketheorem} and~\ref{hecketheoremaffine}.
We take $\mathcal{M}(w\mathbf{z})$ to be $V_{w(z_1)} \otimes \cdots \otimes V_{w(z_r)}$ and the map $\Aa_{s_i}^{w\mathbf{z}}: \mathcal{M}(w\mathbf{z})\to \mathcal{M}(s_iw\mathbf{z})$
to be
\begin{equation}
  \label{baxexamplea}
  \Aa_{s_i}^{w\mathbf{z}}=\frac{\sqrt{v}}{1-(w\mathbf{z})^{\alpha_i^\vee}}(\tau
  R(w\mathbf{z}^{\alpha_i^\vee}))_{i,i+1}.
\end{equation}

\begin{theorem}
  \label{heckeybeinstance}
  With these data, the assumptions of
  Theorems~\ref{hecketheorem} and~\ref{hecketheoremaffine}
  are satisfied. The resulting $\mathfrak{M}(\mathbf{z})$
  has commuting actions of $\widetilde{H}_v$ and
  $U_{\sqrt{v}}(\widehat{\mathfrak{gl}}(n))$.
\end{theorem}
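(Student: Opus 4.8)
The plan is to check that the data in (\ref{baxexamplea}) satisfy Assumptions~\ref{assofuandme}; once that is done, Theorems~\ref{hecketheorem} and~\ref{hecketheoremaffine} immediately furnish the $\widetilde{\mathcal{H}}_v$-action on $\mathfrak{M}(\mathbf{z})$, and the commuting $U_{\sqrt{v}}(\widehat{\mathfrak{gl}}(n))$-action will follow from the fact that each $\Aa_{s_i}$ is a rescaled $R$-matrix, hence a module homomorphism. Since $\mathbf{z}\in\Treg(\C)$, every $w\mathbf{z}$ is again in $\Treg(\C)$, so the denominators $1-(w\mathbf{z})^{\alpha_i^\vee}$ appearing in (\ref{baxexamplea}) are invertible.

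For the scalar relation (\ref{doublesch}), I would fix $w$ and $i$ and set $x=(w\mathbf{z})^{\alpha_i^\vee}$, so that $(s_iw\mathbf{z})^{\alpha_i^\vee}=x^{-1}$. By (\ref{baxexamplea}) the composite $\Aa_{s_i}^{s_iw\mathbf{z}}\circ\Aa_{s_i}^{w\mathbf{z}}$ equals $\frac{v}{(1-x)(1-x^{-1})}$ times $\bigl(\tau R(x^{-1})\,\tau R(x)\bigr)_{i,i+1}$, and by (\ref{doubler}) the operator $\tau R(x^{-1})\,\tau R(x)$ acts by the scalar $\bigl(\sqrt v-\tfrac{x}{\sqrt v}\bigr)\bigl(\sqrt v-\tfrac{x^{-1}}{\sqrt v}\bigr)=\tfrac1v(v-x)(v-x^{-1})$. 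Thus the composite acts by $\frac{(v-x)(v-x^{-1})}{(1-x)(1-x^{-1})}$, which is the right side of (\ref{doublesch}) evaluated at $w\mathbf{z}$ once one invokes the elementary identity $(v-x)(v-x^{-1})=(1-vx)(1-vx^{-1})$.

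For the braid relations (\ref{abraidc}), I would use that $\Phi^\vee$ is of type $A_{r-1}$, so only $n(i,j)\in\{2,3\}$ occur. When $|i-j|\geq 2$ the reflection $s_j$ fixes $\alpha_i^\vee$, so $\Aa_{s_i}^{s_j\mathbf{z}}$ and $\Aa_{s_i}^{\mathbf{z}}$ are literally the same operator, while $(\tau R)_{i,i+1}$ and $(\tau R)_{j,j+1}$ act on disjoint tensor slots, giving (\ref{abraidb}). When $j=i+1$, relation (\ref{abraida}) involves only the three slots $i,i+1,i+2$ of $\mathfrak{M}(\mathbf{z})$, carrying $V_a\otimes V_b\otimes V_c$ with $a,b,c$ the relevant entries of $w\mathbf{z}$; expanding both sides via (\ref{baxexamplea}) along the two reduced words for $s_is_{i+1}s_i$, I expect both to carry the common rational prefactor $\frac{v^{3/2}}{(1-a/b)(1-a/c)(1-b/c)}$, so that the claim reduces to the equality of the remaining products of $\tau R$'s, which is precisely the braided ($\check R$) form of the parametrised Yang--Baxter equation (\ref{glnhatybe}) evaluated at spectral parameters $a/b$ and $b/c$ (hence $a/c$ for the middle factor). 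I expect this spectral-parameter and prefactor bookkeeping in the order-$3$ case to be the only genuinely fiddly step; nothing conceptual is at stake, and the type $C_2$/$G_2$ computer check of Theorem~\ref{hecketheorem} is not needed here.

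Finally, for the commuting actions I would make each summand $\mathcal{M}(w\mathbf{z})=V_{w(z_1)}\otimes\cdots\otimes V_{w(z_r)}$ a $U_{\sqrt{v}}(\widehat{\mathfrak{gl}}(n))$-module in the usual (coproduct) way, so that $\mathfrak{M}(\mathbf{z})=\bigoplus_{w}\mathcal{M}(w\mathbf{z})$ becomes a $U_{\sqrt{v}}(\widehat{\mathfrak{gl}}(n))$-module with the $\mathcal{M}(w\mathbf{z})$ as submodules. The operators $\theta_\lambda$ act by a scalar on each $\mathcal{M}(w\mathbf{z})$ by (\ref{thetaact}), hence commute with $U_{\sqrt{v}}(\widehat{\mathfrak{gl}}(n))$. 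For $\mathfrak{T}_i$, I would observe from (\ref{tiaction}) that, with respect to the decomposition $\mathfrak{M}(\mathbf{z})=\bigoplus_w\mathcal{M}(w\mathbf{z})$, every nonzero block of $\mathfrak{T}_i$ is either scalar multiplication (the $D_i$-term, sending $\mathcal{M}(w\mathbf{z})$ to itself) or the rescaled $R$-matrix $(\tau R)_{i,i+1}\colon\mathcal{M}(s_iw\mathbf{z})\to\mathcal{M}(w\mathbf{z})$, which is a homomorphism of $U_{\sqrt{v}}(\widehat{\mathfrak{gl}}(n))$-modules by Jimbo's theorem; a linear map all of whose blocks are module maps is itself a module map. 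Hence every $\mathfrak{T}_i$, and therefore all of $\widetilde{\mathcal{H}}_v$, commutes with $U_{\sqrt{v}}(\widehat{\mathfrak{gl}}(n))$, as claimed.
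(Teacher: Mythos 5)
Your proposal is correct and follows the same line of argument as the paper's proof, which is quite terse: the paper simply states that (\ref{doublesch}) is checked from (\ref{doubler}) using the chosen prefactor, that the braid relation (\ref{abraida}) follows from the Yang-Baxter equation (\ref{glnhatybe}), and that the commuting $U_{\sqrt{v}}(\widehat{\mathfrak{gl}}(n))$-action comes from the $\Aa_{s_i}$'s being intertwiners. You fill in exactly the bookkeeping the paper leaves implicit — the identity $(v-x)(v-x^{-1})=(1-vx)(1-vx^{-1})$ for the scalar check, the observation that the rational prefactors $\tfrac{v^{3/2}}{(1-a/b)(1-a/c)(1-b/c)}$ agree on both sides of the length-three braid relation so only the $\check R$-form of the Yang-Baxter equation remains, and the block-structure argument that $\mathfrak{T}_i$ and $\theta_\lambda$ are built from module maps and scalars and hence commute with the quantum-group action — and these details are all correct. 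Your remark that only $n(i,j)\in\{2,3\}$ occur in type $A$, so the computer verification for $C_2$/$G_2$ in the appendix is irrelevant here, is a valid observation though the paper does not make it explicitly.
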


\begin{proof}
To check the hypotheses of
Theorems~\ref{hecketheorem} and~\ref{hecketheoremaffine},
we note that the braid relation (\ref{abraida}) follows from the Yang-Baxter equation
(\ref{glnhatybe}).
The constant in front of (\ref{baxexamplea}) is chosen to
make $\Aa^{s_i \mathbf{z}}_{s_i} \circ \Aa_{s_i}^{\mathbf{z}}$ equal the
constant (\ref{doublesch}). This may be checked using~(\ref{doubler}).
Since the representation is built from 
$U_{\sqrt{v}}(\widehat{\mathfrak{gl}}(n))$ intertwining operators,
we have commuting actions of $\widetilde{H}_v$ and
$U_{\sqrt{v}}(\widehat{\mathfrak{gl}}(n))$.
\end{proof}

\begin{remark}
  \label{simpletwist}
  We could multiply $\Aa_{s_i}^{w\mathbf{z}}$ by any scalar value
  $\xi((w\mathbf{z})^{\alpha_i^\vee})$ where $\xi$ is a function
  of a complex parameter such that $\xi(z)\xi(z^{-1})=1$.
  Indeed, this modification affects neither the braid
  relation (\ref{abraida}) nor the scalar value of
  $\Aa^{s_i \mathbf{z}}_{s_i} \circ A_{s_i}^{\mathbf{z}}$.
\end{remark}


We wish to discuss the relationship between Theorem~\ref{heckeybeinstance} and
Proposition~\ref{JimboHeckeTheorem}.  There are two specializations
that shed light on this relationship. We shall consider the limits
$\mathbf{z}^{\alpha_i^\vee}\to 0$ (for all $i$) and $v\to 1$. We consider $\mathbf{z}^{\alpha_i^\vee}\to 0$,
or equivalently $x\to 0$, first.  

The $R$-matrices $R$ and $R(z)$ are related to the quantum groups
$U_q(\mathfrak{gl}(n))$ and its affinization $U_q(\widehat{\mathfrak{gl}}(n))$.  Recall that we defined
$R(x)=R-xR_{21}^{-1}$.
We recover $R$ by taking the limit $x\to 0$.
Let us move $\mathbf{z}$ in $\Treg(\C)$ 
in such a way that
$\mathbf{z}^{\alpha^\vee}\to 0$ for all $\alpha\in \Phi^\vee_+$.
Thus for every $w\in W$ and simple reflection $s_i$
\[(w\mathbf{z})^{\alpha_i^\vee}\longrightarrow
\left\{\begin{array}{ll}0&\text{if $w^{-1}\alpha_i^\vee\in\Phi^\vee_+$}\\
\infty&\text{if $w^{-1}\alpha_i^\vee\in\Phi^\vee_-$\,.}\end{array}\right.\]
The condition $w^{-1}\alpha_i^\vee\in\Phi^\vee_+$ is equivalent to $s_iw>w$.
It follows that
\[D_i(w\mathbf{z})
\longrightarrow
\left\{\begin{array}{ll}0&\text{if $s_iw>w$}\\
v-1&\text{if $s_iw<w$\,.}\end{array}\right.\]
We also have the following specialization of (\ref{baxexamplea}):
\[\Aa_{s_i}^{w\mathbf{z}}\longrightarrow\left\{
  \begin{array}{ll}\sqrt{v}(\tau R)_{i,i+1}&\text{if $s_iw>w$}\\
    \sqrt{v}(\tau{R})^{-1}_{i,i+1}&\text{if $s_iw<w$\,.}
    \end{array}
  \right.
\]

These specializations give a representation $\mathfrak{M}(0)$
of the finite Hecke algebra $\mathcal{H}_v$ which admits
the following description. If $U$ is any vector space,
we will denote by $W\cdot U$ the direct sum of $|W|$ copies
of $U$, indexed by $W$. Thus an element of $W\cdot U$ is
a tuple $(u_w|w\in W)$ with $u_w\in U$. The representation
$\mathfrak{M}(0)$ acts on $W\cdot\bigl(\otimes^rV)$, where
$V=\mathbb{C}^n$, and in this representation the above calculations show that
$T_i((\phi_{w}))=(\psi_{w})$ with
\begin{equation}
  \label{limitingrep}
\psi_{w} =
\left\{\begin{array}{ll}
(v-1)\phi_{w}+\sqrt{v}(\tau R)^{-1}_{i,i+1}\phi_{s_iw}
    &\text{if $s_iw>w$}\\
    \sqrt{v}(\tau R)_{i,i+1}\phi_{s_iw}&\text{if $s_iw<w$}.\end{array}\right.
\end{equation}
(We are writing $\phi_{w}$ instead of $\phi_{w\mathbf{z}}$
because $\mathbf{z}$ has been specialized to a limiting
value and no longer plays a role.)
This representation does not extend to a representation
of $\widetilde{\mathcal{H}}_v$, nor of $U_{\sqrt{v}}(\widehat{\mathfrak{gl}}(n))$.
However since $(\tau R)_{i,i+1}$ and $(\tau R)^{-1}_{i,i+1}$ are endomorphisms
of $\otimes^r V$ as a $U_{\sqrt{v}}(\mathfrak{gl}(n))$-module, we
do have a commuting action of $U_{\sqrt{v}}(\mathfrak{gl}(n))$.
  
\begin{remark}
The representation $\mathfrak{M}(0)$ is
closely related to the regular representation of
$\mathcal{H}_v$. Indeed, consider the action
$T_i(\phi_w)=(\psi_w)$ of $\mathcal{H}_v$ on $W\cdot\mathbb{C}$,
in which
\[\psi_w = \left\{
\begin{array}{ll}\sqrt{v}\phi_{s_iw}&\text{if
  $s_iw>w$}\\
  (v-1)\phi_w+\sqrt{v}\phi_{s_iw}&\text{if $s_iw<w$\,.}
\end{array}\right.
\]
This is just the regular representation
of $\mathcal{H}_v$. Compare this with $\mathfrak{M}(0)$, which 
differs from the regular representation by
involving the $R$-matrix. The regular representation is induced
from the trivial representation of $\mathbb{C}$, but the similar
representation $\mathfrak{M}(0)$ is not so induced.
\end{remark}

Our goal is to show that this representation generalizes that of Proposition~\ref{JimboHeckeTheorem}.
Let us first consider the limiting case where $v \rightarrow 1$. In this case
the Hecke algebra $\mathcal{H}_v$ degenerates to the symmetric group $S_r$. 
If $U$ is an $S_r$-module, we may define a
``wreathed'' $S_r$-module structure on $W\cdot U$ as follows. If $(\phi_w | w \in W)$ 
is an element of $W\cdot U$, and $y \in W$, define
$y (\phi_w) = (\psi_w)$ where $\psi_w = y \cdot \phi_{y^{- 1} w}$. This wreathed
representation $W\cdot U$
is isomorphic to the tensor product of $U$ with the regular representation
$\pi_{\text{reg}}$ of $S_r$. Now the regular representation has the
property that $U\otimes\pi_{\text{reg}}$ is isomorphic to the
direct sum of $\dim (U)$ copies of the regular representation of $S_r$, so it
might seem that we cannot recover the representation $U$ from it. However this
is not true since $W\cdot U$ comes equipped with an embedding of
$U \longrightarrow W\cdot U$, namely the diagonal map, and pulling back the
representation on $W\cdot W$ recovers the representation of~$U$.

Our next result is a Hecke algebra variant of this wreathed representation. When applied
to the representation described by (\ref{limitingrep}), we obtain the representation
of Proposition~\ref{JimboHeckeTheorem}. For part (iii), the Hecke algebra
$\mathcal{H}_v$ has an automorphism $\ast$ of order 2 which we now describe.
The algebra $\mathcal{H}_v$ has basis $T_w$ indexed by $w\in W$ such that
$T_{s_i}=T_i$, and $T_{ww'}=T_wT_{w'}$ when $\ell(ww')=\ell(w)+\ell(w')$.
Then $T_w^\ast=(-v)^{\ell(w)}T_{w^{-1}}^{-1}$. It is easy to prove that
$\ast$ is an automorphism since $T_i^\ast=-v T_i^{-1}$ satisfy the same quadratic
and braid relations as the~$T_i$.

\begin{theorem}
  \label{wreaththm}
  Let $U$ be an $\mathcal{H}_v$-module. 
  \begin{enumerate}
    \item[(i)]
    The vector space $W\cdot U$ has an
   $\mathcal{H}_v$-module structure in which $T_i$ acts by the operator
   $\mathfrak{T}_i$ on $W\cdot U$ defined by $\mathfrak{T}_i : (\phi_w) \longmapsto
   (\psi_w)$ where
   \[ \psi_w = \left\{ \begin{array}{ll}
       T_i \phi_{s_i w} & \text{if $s_i w < w$,}\\
       (v - 1) \phi_w + v T_i^{- 1} \phi_{s_i w} & \text{if $s_i w > w$} .
     \end{array} \right. \]
  \item[(ii)]
  Let $\Delta : U \longrightarrow W\cdot U$ be the diagonal map, so $\Delta \phi$
  is the tuple $(\phi_w)$ with $\phi_w = \phi$ for all $w$. Then $\Delta$ is
  an $\mathcal{H}_v$-module homomorphism.
  \item[(iii)] Let $\Delta^\ast:U\to W\cdot U$ be the map that sends $\phi\in U$ to
    $(\phi_w)\in W\cdot U$ where $\phi_w=(-v)^{\ell(w)}\phi$. Then
    $\Delta^\ast$ is a $\mathcal{H}_v$-module homomorphism, where we
    give $U$ the $\mathcal{H}_v$ module structure twisted by the
    involution $\ast$. For the generators, this means that
    \begin{equation}
      \label{twisteddiagonal}
      \mathfrak{T}_i\Delta^\ast(\phi)=\Delta^\ast(T_i^\ast\phi).
    \end{equation}
  \end{enumerate}
\end{theorem}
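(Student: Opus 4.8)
The plan is to verify the Hecke relations of part~(i) along the lines of the proof of Theorem~\ref{hecketheorem}, and then to obtain (ii) and (iii) from short direct computations. The structural point for (i) is that for a simple reflection $s_i$ the two coordinates of $W\cdot U$ indexed by a pair $\{w,s_iw\}$ span a $\mathfrak{T}_i$-stable subspace, and, more generally, for a rank-two standard parabolic $\langle s_i,s_j\rangle$ and one of its cosets $C\subseteq W$ the coordinates indexed by $C$ span a subspace stable under both $\mathfrak{T}_i$ and $\mathfrak{T}_j$, on which these operators are expressed purely in terms of $T_i,T_j$ and their inverses; hence every identity to be checked is an identity in a matrix algebra over the rank-two parabolic Hecke subalgebra of $\mathcal{H}_v$, to be treated exactly as in Theorem~\ref{hecketheorem}. (Alternatively one can present (i) as a degeneration of the representation schema, taking $\mathcal{M}(w\mathbf{z})=U$ and $\mathcal{A}_{s_i}^{w\mathbf{z}}=T_i+\dfrac{(v-1)(w\mathbf{z})^{\alpha_i^\vee}}{1-(w\mathbf{z})^{\alpha_i^\vee}}$: this data satisfies Assumptions~\ref{assofuandme}, since $\mathcal{A}_{s_i}^{s_iw\mathbf{z}}\circ\mathcal{A}_{s_i}^{w\mathbf{z}}$ collapses to the scalar~(\ref{doublesch}) using only $T_i^2=(v-1)T_i+v$, while~(\ref{abraidb}) and~(\ref{abraida}) follow from the braid relations of $\mathcal{H}_v$; Theorems~\ref{hecketheorem} and~\ref{hecketheoremaffine} then apply, and specializing $(w\mathbf{z})^{\alpha_i^\vee}$ to $0$ or $\infty$ as in the discussion preceding~(\ref{limitingrep}) returns the operators of part~(i). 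This is also what makes the connection to Theorem~\ref{heckeybeinstance} and Proposition~\ref{JimboHeckeTheorem} explicit.)

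Concretely, for (i): on a pair $\{w,s_iw\}$ with $s_iw>w$, identifying both summands with $U$, the operator $\mathfrak{T}_i$ is the block matrix $M_i=\left(\begin{smallmatrix}v-1 & vT_i^{-1}\\ T_i & 0\end{smallmatrix}\right)$, and since $T_i\cdot vT_i^{-1}=vT_i^{-1}\cdot T_i=v$ one checks in one line that $M_i^2=(v-1)M_i+v\cdot\mathrm{Id}$, which is the quadratic relation $\mathfrak{T}_i^2=(v-1)\mathfrak{T}_i+v$. For the braid relation $\mathfrak{T}_i\mathfrak{T}_j\cdots=\mathfrak{T}_j\mathfrak{T}_i\cdots$ one works coset by coset: the case $n(i,j)=2$ is immediate from $T_iT_j=T_jT_i$; the case $n(i,j)=3$ is a direct block identity on a six-element coset, a consequence of $T_iT_jT_i=T_jT_iT_j$; and the cases $n(i,j)=4,6$ reduce to the rank-two systems $C_2$ and $G_2$, where---exactly as in Theorem~\ref{hecketheorem}---the verification is most efficiently done by a computer-algebra computation in the style of Appendix~\ref{appendix}. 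Since the $T_i$ generate $\mathcal{H}_v$, the assignment $T_i\mapsto\mathfrak{T}_i$ then extends to the asserted module structure.

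Part~(ii) is then immediate: the $w$-coordinate of $\mathfrak{T}_i\Delta\phi$ equals $T_i\phi$ when $s_iw<w$ and $(v-1)\phi+vT_i^{-1}\phi$ when $s_iw>w$, and since $vT_i^{-1}=T_i-(v-1)$ the latter also equals $T_i\phi$; thus $\mathfrak{T}_i\Delta\phi=\Delta(T_i\phi)$, so $\Delta$ is an $\mathcal{H}_v$-homomorphism. For part~(iii) the same identity gives $T_i^\ast=-vT_i^{-1}=(v-1)-T_i$, from which it is evident that $T_i^\ast$ satisfies the same quadratic and braid relations as $T_i$, confirming that $\ast$ is an order-two automorphism; one then checks~(\ref{twisteddiagonal}) by computing $\mathfrak{T}_i\Delta^\ast(\phi)$ coordinate by coordinate, using the relation between the scalars $(-v)^{\ell(s_iw)}$ and $(-v)^{\ell(w)}$ and simplifying with the quadratic relation, just as in (ii). The one genuinely laborious ingredient is the braid relation of part~(i) in the $C_2$ and $G_2$ cases, which, as for Theorem~\ref{hecketheorem}, is relegated to a computer-algebra check.
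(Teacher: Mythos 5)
For parts (i) and (ii) your main route is essentially correct and close in spirit to the paper. The quadratic relation via the block matrix
$M_i=\bigl(\begin{smallmatrix}v-1 & vT_i^{-1}\\ T_i & 0\end{smallmatrix}\bigr)$
is precisely the computation the paper performs on the basis vectors $\phi^y$, just packaged as a $2\times 2$ matrix over $\mathcal{H}_v$; and your verification of (ii), using $vT_i^{-1}=T_i-(v-1)$, is the same as the paper's. However, for the braid relation your treatment is genuinely weaker than the paper's: you delegate the rank-two cases $C_2$ and $G_2$ to a SAGE computation and only gesture at $A_2$, whereas the paper gives a short elementary argument that works uniformly for all $n(i,j)$. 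The paper's trick is to pick the longest element $y$ in the relevant rank-two coset: starting from $\phi^y$, every application of a $\mathfrak{T}_i$ or $\mathfrak{T}_j$ moves \emph{down} in Bruhat order, so the $(v-1)$ correction terms never appear and both $\mathfrak{T}_i\mathfrak{T}_j\cdots$ and $\mathfrak{T}_j\mathfrak{T}_i\cdots$ reduce to $v^{n(i,j)}((T_iT_j\cdots)^{-1}\phi)^{\cdot}$; one then peels off $\mathfrak{T}_i$'s and $\mathfrak{T}_j$'s using the already-proved quadratic relation to cover the rest of the coset. No computer algebra is needed, and you should not appeal to Appendix~\ref{appendix}, which checks Theorem~\ref{hecketheorem} (scalar $\mathcal{A}$'s), not the operator-coefficient block matrices here. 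Your parenthetical derivation from the representation schema is also not quite right as stated: taking $\mathcal{A}^{w\mathbf{z}}_{s_i}=T_i+D_i(w\mathbf{z})$ and passing to the limit $\mathbf{z}^{\alpha^\vee}\to 0$ puts the $(v-1)\phi_w$ term on the side $s_iw<w$ (because that is where $D_i(w\mathbf{z})\to v-1$), which is the opposite of the formula in part~(i); reconciling the two requires a reindexing that you have not supplied.

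For part (iii) your write-up is not a proof. You say the identity \eqref{twisteddiagonal} ``is checked coordinate by coordinate...just as in (ii),'' but (ii) works because $\Delta$ is constant across coordinates and the check collapses to $vT_i^{-1}+(v-1)=T_i$; with the weights $(-v)^{\ell(w)}$ the two coordinates in each $\{w,s_iw\}$ block give genuinely different conditions. The paper handles this by invoking semisimplicity of $\mathcal{H}_v$ to reduce to $T_i$-eigenvectors of eigenvalue $v$ or $-1$, a step you omit. You should also verify the eigenvector cases explicitly: a direct rank-one computation (take $U=\mathbb{C}$, $W=\{1,s\}$) shows that \eqref{twisteddiagonal} holds when $T_i\phi=v\phi$ but, with the definitions as printed, does not reduce to a tautology when $T_i\phi=-\phi$ --- so this part of the statement and its normalization deserve a careful re-examination before being asserted.
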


\begin{proof}
  We must show that the $\mathfrak{T}_i$ satisfy the quadratic and braid
  relations. If $y \in W$ and $\phi \in U$ let $\phi^y = (\phi_w)$ where
  $\phi_w = \phi$ if $w = y$, 0 otherwise. These elements span $W\cdot U$, and so
  it is sufficient to check the quadratic and braid relations on these. We
  have
  \[ \mathfrak{T}_i \phi^y = \left\{ \begin{array}{ll}
       (v - 1) \phi^y + (T_i \phi)^{s_i y} & \text{if $s_i y > y$,}\\
       v (T_i^{- 1} \phi)^{s_i y} & \text{if $s_i y < y$} .
     \end{array} \right. \]
  It is now straightforward to check that $\mathfrak{T}_i^2 \phi^y = (v - 1)
  \mathfrak{T}_i \phi^y + v \phi^y$, taking the cases $s_i y > y$ and $y > s_i
  y$ separately. This proves the quadratic relation.
  
  Now let us check the braid relations. We assume that $s_i s_j$ has order 3,
  leaving the case where $s_i$ and $s_j$ commute to the reader. We must show
  that $\mathfrak{T}_i \mathfrak{T}_j \mathfrak{T}_i \phi^w =\mathfrak{T}_j
  \mathfrak{T}_i \mathfrak{T}_j \phi^w$. Let $y$ be the element of maximal
  length in the coset $\langle s_i, s_j \rangle y$ of order $6$. First, if $w
  = y$, then both $\mathfrak{T}_i \mathfrak{T}_j \mathfrak{T}_i \phi^w$ and
  $\mathfrak{T}_j \mathfrak{T}_i \mathfrak{T}_j \phi^w$ equal $v^3 ((T_i T_j
  T_i)^{- 1} \phi)^{s_i s_j s_i}$, so this case is proved. To prove the other
  cases, we may use the quadratic relation. For example, we can prove that
  $\mathfrak{T}_i \mathfrak{T}_j \mathfrak{T}_i (\mathfrak{T}_i \phi^y)
  =\mathfrak{T}_j \mathfrak{T}_i \mathfrak{T}_j (\mathfrak{T}_i \phi^y)$ as
  follows. The left-hand side equals
  \[ (v - 1) \mathfrak{T}_i \mathfrak{T}_j \mathfrak{T}_i \phi^y +
     v\mathfrak{T}_i \mathfrak{T}_j \phi^y \]
  while using the already-proved fact that $\mathfrak{T}_i \mathfrak{T}_j
  \mathfrak{T}_i \phi^y =\mathfrak{T}_j \mathfrak{T}_i \mathfrak{T}_j \phi^y$,
  the right-hand side equals $\mathfrak{T}_j^2 \mathfrak{T}_i \mathfrak{T}_j
  \phi^y = (v - 1) \mathfrak{T}_j \mathfrak{T}_i \mathfrak{T}_j \phi^y + v \mathfrak{T}_i \mathfrak{T}_j 
  \phi^y$. Now again using $\mathfrak{T}_i \mathfrak{T}_j \mathfrak{T}_i
  \phi^y =\mathfrak{T}_j \mathfrak{T}_i \mathfrak{T}_j \phi^y$, we have proved
  $\mathfrak{T}_i \mathfrak{T}_j \mathfrak{T}_i (\mathfrak{T}_i \phi^y)
  =\mathfrak{T}_j \mathfrak{T}_i \mathfrak{T}_j (\mathfrak{T}_i \phi^y)$,
  which implies that $\mathfrak{T}_i \mathfrak{T}_j \mathfrak{T}_i \phi^{s_i
  y} =\mathfrak{T}_j \mathfrak{T}_i \mathfrak{T}_j \phi^{s_i y}$. Continuing
  in this fashion we get $\mathfrak{T}_i \mathfrak{T}_j \mathfrak{T}_i \phi^w
  =\mathfrak{T}_j \mathfrak{T}_i \mathfrak{T}_j \phi^w$ for all six elements
  of the coset $\langle s_i, s_j \rangle y$.
  
  Next we prove (ii). We must show that $\Delta
  T_i =\mathfrak{T}_i \Delta$. Apply both sides to $\phi \in U$. What we must
  show is that if $\phi_w = \phi$ for all $w \in W$ then $\mathfrak{T}_i
  (\phi_w) = (\psi_w)$ where $\psi_w = T_i \phi$ for all $w$. If $s_i w < w$
  then $\psi_w = T_i \phi$ by definition. On the other hand if $s_i w > w$,
  then $\psi_w = v T_i^{- 1} \phi + (v - 1) \phi$. Recalling that $T_i^{- 1} =
  v^{- 1} (T_i - (v - 1))$, we have $\psi_w = T_i \phi$ in this case also.

  For (iii), it is sufficient to check (\ref{twisteddiagonal}) since the
  $T_i$ generate $\mathcal{H}$. Since $\mathcal{H}_v$ is a semisimple algebra,
  we may assume that $\phi$ is an eigenvector for $T_i$. Because
  of the quadratic relation, the possible eigenvalues are $-1$ and $v$.
  Thus we may assume that either $T_i\phi=-\phi$ and $T_i^\ast\phi=v\phi$,
  or that $T_i\phi=v\phi$ and $T_i^\ast\phi=-\phi$. Then 
  (\ref{twisteddiagonal}) may be checked directly in both cases.
\end{proof}

The limiting representation (\ref{limitingrep}) is obtained from
the representation of Proposition~\ref{JimboHeckeTheorem} by applying the
construction in Theorem~\ref{wreaththm}. Note that because
$T_i=\sqrt{v}(\tau R)_{i,i+1}$ is $U_{\sqrt{v}}(\mathfrak{gl}(n))$
equivariant, the homomorphism $\Delta$ is a
$U_{\sqrt{v}}(\mathfrak{gl}(n))$-module homomorphism as well
as an $\mathcal{H}_v$-module homomorphism.

\begin{remark} 
Schur-Weyl-Jimbo duality implies the existence of a functor due to \cite{JimboHecke} from the category of finite dimensional right $\mathcal{H}_v$-modules to the category of $U_{\sqrt{v}}(\mathfrak{gl}(n))$-modules of level $r$ given by the following formula:
\[ \mathcal{J} : M \longmapsto M \otimes_{\mathcal{H}_v} (\otimes^r V). \]
This functor was generalized by Chari and Pressley \cite{ChariPressleyAffine} to a functor that takes finite dimensional (right) $\widetilde{\mathcal{H}}_v$-modules $M$ to finite dimensional $U_{\sqrt{v}}(\widehat{\mathfrak{gl}}(n))$-modules. More precisely, a module $M$ is mapped to a $U_{\sqrt{v}}(\mathfrak{gl}(n))$-module using Jimbo's functor,  which is then given a $U_{\sqrt{v}}(\widehat{\mathfrak{gl}}(n))$-module structure by an explicit prescription on the extra generators of $U_{\sqrt{v}}(\widehat{\mathfrak{gl}}(n))$ using the $\widetilde{\mathcal{H}}_v$-module structure on $M$. Theorem \ref{wreaththm} allows us to write a new functor from the category of finite dimensional right $\mathcal{H}_v$-modules to the category of $U_{\sqrt{v}}(\mathfrak{gl}(n))$-modules of:
\[ \mathcal{J}' : M \longmapsto M \otimes_{\mathcal{H}_v} (W \cdot \otimes^r V). \]
By Theorem \ref{wreaththm} (ii), this will be a generalization of Jimbo's functor. It would be interesting to study the properties of this new functor, and also to understand if we can endow $M \otimes_{\mathcal{H}_v} (W \cdot \otimes^r V)$ with a $U_{\sqrt{v}}(\widehat{\mathfrak{gl}}(n))$-module structure when $M$ is an $\widetilde{\mathcal{H}}_v$-module, therefore generalizing the functor due to Chari and Pressley. 
\end{remark}

\section{Drinfeld twists for quantum groups}\label{Drinfeldtwist}

In this section we introduce Drinfeld twists for quantum groups and $R$-matrices. This will allow us to generalize the setup in Section~\ref{HeckeRM} and match certain $R$-matrices with the intertwining operators acting on the twisted Jacquet modules, computed in~\cite{KazhdanPatterson}. 

In this section only, $q$ denotes an indeterminate or a non-zero complex number which is not to be confused with our earlier use of $q$ as the cardinality of the residue field of our local field and $\mathfrak{g}$ will denote either $\mathfrak{gl}(n)$ or $\widehat{\mathfrak{gl}}(n)$. The quantum group $U_q(\mathfrak{g})$
is a quasitriangular Hopf algebra that can be written down in terms of
generators and relations, see chapters 6 and 9 of Chari and Pressley
\cite{ChariPressleyBook}. In particular it is a Hopf algebra with
comultiplication $\Delta$ and antipode $S$, and with an invertible element 
$\mathcal{R} \in U_q(\mathfrak{g}) \widehat \otimes U_q(\mathfrak{g})$\footnote{Here $\widehat{\otimes}$ stands for a completion of the usual tensor product so $\mathcal{R}$ is an infinite sum of terms of the form $\mathcal{R}_i^{(1)} \otimes \mathcal{R}_i^{(2)}$, where $\mathcal{R}_i^{(1)}, \mathcal{R}_i^{(2)} \in U_q(\mathfrak{g})$. Still, for any tensor product of finite-dimensional modules, all but finitely many terms in the sum act as $0$ and therefore the action of $\mathcal{R}$ is well-defined.}, called a universal $R$-matrix,
that satisfies the following conditions:
\[ (\Delta \otimes \text{id}) \mathcal{R} = \mathcal{R}_{13} \mathcal{R}_{23}  \]
\[ (  \text{id} \otimes \Delta) \mathcal{R} = \mathcal{R}_{13} \mathcal{R}_{12}  \]  
\[ \Delta^{op} (x) = \mathcal{R} \Delta(x) \mathcal{R}^{-1}, \forall x \in U_q(\mathfrak{g}) \]
where $\Delta^{op} (x) = \tau \Delta (x)$ and $\tau (a \otimes b) = b \otimes a$. One can explicitly build
such an element using the Drinfeld quantum double construction; see \cite{Drinfeld}.

An important observation is that the $R$-matrices mentioned in Section \ref{HeckeRM} can be obtained from the action of the universal $R$-matrix of the corresponding quantum group. In particular, if V is the defining $U_q(\mathfrak{gl}_n)$-module, then the matrix $R$ defined in equation \eqref{glnRmatrix} is just the action of the universal $R$-matrix of $U_q(\mathfrak{gl}_n)$ on $V \otimes V$. Using this, it is then a standard exercise that $R$ satisfies the Yang-Baxter equation and $\tau R$ is an $U_q(\mathfrak{gl}_n)$-module homomorphism. 

Similarly, the $R$-matrix $R(xy^{-1})$ defined in \eqref{glnhatRmatrix} can be obtained from the action of the universal $R$-matrix of $U_q(\widehat{\mathfrak{gl}}(n))$ on the tensor product $V_x \otimes V_y$ of standard evaluation modules. 

We are interested in multiparameter deformations of these solutions and the quantum groups 
they are associated with. This was done by Drinfeld \cite{DrinfeldQuasiHopf} and Reshetikhin \cite{ReshetikhinMultiparameter}.
We note that similar results have been independently obtained by Artin, Schelter and Tate~\cite{ArtinSchelterTate} 
and Sudbery \cite{Sudbery}. 

Assume there is an  element $F = \sum_i f^i \otimes f_i \in U_q(\mathfrak{g}) \otimes U_q(\mathfrak{g})$ 
that satisfies the following conditions:
\begin{equation} \label{Fmatrix}
 (\Delta \otimes \text{id})F = F_{13} F_{23}, \,\,\, (\text{id} \otimes \Delta) F = F_{13} F_{12}, \,\,\,
F_{12} F_{13}F_{23} = F_{23} F_{13} F_{12}, \,\,\, F F_{21} = 1 
\end{equation}
where $F_{21} = \tau F = \sum_i f_i \otimes f^i$. Let $u =\sum_i f^i S(f_i)$. One can obtain a new quasitriangular Hopf algebra 
by $twisting$ by $F$ as follows:

\begin{theorem}[\cite{ReshetikhinMultiparameter}]
  \label{twistedhopfalgebra}
  There is a quasitriangular Hopf algebra $U^F_q(\mathfrak{g})$ that is 
$U_q(\mathfrak{g})$ as a vector space. It has the same unit, counit and multiplication as 
$U_q(\mathfrak{g})$. The comultiplication, antipode and universal $R$-matrix are given by 
\[\Delta^F(a) =F \Delta(a)F^{-1} , \,\,\, S^F(a) = u S(a) u^{-1}, \,\,\,  \mathcal{R}^F = F_{21} \mathcal{R} F^{-1}. 
\]
\end{theorem}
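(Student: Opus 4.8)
The plan is to verify the Hopf-algebra and quasitriangularity axioms for $U^F_q(\mathfrak{g})$ directly, following Reshetikhin~\cite{ReshetikhinMultiparameter}; the only inputs are the relations~\eqref{Fmatrix}, which say precisely that $F$ is a (normalized, unitary) $2$-cocycle, i.e.\ a Drinfeld gauge transformation. Before anything else I would extract from~\eqref{Fmatrix} the facts that drive the proof. First, substituting $(\Delta\otimes\mathrm{id})F = F_{13}F_{23}$ and $(\mathrm{id}\otimes\Delta)F = F_{13}F_{12}$ into $F_{12}F_{13}F_{23} = F_{23}F_{13}F_{12}$ gives the cocycle identity $F_{12}\cdot(\Delta\otimes\mathrm{id})F = F_{23}\cdot(\mathrm{id}\otimes\Delta)F$. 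Second, applying $\mathrm{id}\otimes\varepsilon\otimes\mathrm{id}$ to $(\Delta\otimes\mathrm{id})F = F_{13}F_{23}$ and using $(\mathrm{id}\otimes\varepsilon)\Delta = \mathrm{id}$ on the left yields $F = F\cdot\bigl(1\otimes(\varepsilon\otimes\mathrm{id})F\bigr)$, hence $(\varepsilon\otimes\mathrm{id})F = 1$ since $F$ is invertible; symmetrically $(\mathrm{id}\otimes\varepsilon)F = 1$. Finally $FF_{21}=1$ says $F^{-1}=F_{21}$, which gives the useful identities $(F_{13})^{-1}=F_{31}$, $(F_{23})^{-1}=F_{32}$ and $(F_{12})^{-1}=F_{21}$ inside $U_q(\mathfrak{g})^{\otimes 3}$.

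Next, the bialgebra structure. Since $\Delta^F = \mathrm{Ad}(F)\circ\Delta$ is a composition of algebra homomorphisms, it is an algebra homomorphism, and as the multiplication, unit and counit are unchanged, only coassociativity and the counit axiom need checking. Writing $(\Delta^F\otimes\mathrm{id}) = \mathrm{Ad}(F_{12})\circ(\Delta\otimes\mathrm{id})$ and $(\mathrm{id}\otimes\Delta^F) = \mathrm{Ad}(F_{23})\circ(\mathrm{id}\otimes\Delta)$, both $(\Delta^F\otimes\mathrm{id})\Delta^F(a)$ and $(\mathrm{id}\otimes\Delta^F)\Delta^F(a)$ become a conjugate of the common element $(\Delta\otimes\mathrm{id})\Delta(a)=(\mathrm{id}\otimes\Delta)\Delta(a)$, by $F_{12}\cdot(\Delta\otimes\mathrm{id})F$ on the one hand and by $F_{23}\cdot(\mathrm{id}\otimes\Delta)F$ on the other; these conjugators coincide by the cocycle identity, so $\Delta^F$ is coassociative. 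The counit is preserved because $(\varepsilon\otimes\mathrm{id})\Delta^F(a) = (\varepsilon\otimes\mathrm{id})\bigl(F\Delta(a)F^{-1}\bigr) = a$ by $(\varepsilon\otimes\mathrm{id})F=1$, and likewise $(\mathrm{id}\otimes\varepsilon)\Delta^F(a)=a$.

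Then the antipode. With $F^{-1} = \sum_i g^i\otimes g_i$, I would set $u^{-1} := \sum_i S(g^i) g_i$; using the cocycle identity one checks $u u^{-1} = u^{-1} u = 1$, so $u$ is invertible. Because $S$ is an algebra anti-homomorphism, $S^F = \mathrm{Ad}(u)\circ S$ is one too, and it remains to verify the antipode axiom $m\,(S^F\otimes\mathrm{id})\,\Delta^F = \eta\varepsilon = m\,(\mathrm{id}\otimes S^F)\,\Delta^F$. Expanding $\Delta^F(a) = \sum f^i a_{(1)} g^j\otimes f_i a_{(2)} g_j$ (Sweedler notation, $F=\sum f^i\otimes f_i$) and pushing $S^F$ through, this collapses to $\varepsilon(a)1$ after repeated application of the antipode axiom for $S$ together with the cocycle identity. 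I expect this antipode computation, and especially the three-fold-tensor bookkeeping in the quasitriangularity step below, to be the main obstacle: each identity is conceptually routine once the cocycle identity and the relations $(F_{ab})^{-1}=F_{ba}$ are in hand, but the reindexing must be carried out carefully.

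Finally, quasitriangularity of $\mathcal{R}^F = F_{21}\mathcal{R}F^{-1}$. The intertwining property is immediate: $\Delta^{F,\mathrm{op}}(a) = \tau\bigl(F\Delta(a)F^{-1}\bigr) = F_{21}\,\Delta^{\mathrm{op}}(a)\,F_{21}^{-1} = F_{21}\mathcal{R}\Delta(a)\mathcal{R}^{-1}F_{21}^{-1}$, while $\mathcal{R}^F\Delta^F(a)(\mathcal{R}^F)^{-1} = F_{21}\mathcal{R}F^{-1}\cdot F\Delta(a)F^{-1}\cdot F\mathcal{R}^{-1}F_{21}^{-1}$ telescopes to the same expression. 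For the two hexagon identities I would write $(\Delta^F\otimes\mathrm{id})\mathcal{R}^F = F_{12}\,(\Delta\otimes\mathrm{id})\bigl(F_{21}\mathcal{R}F^{-1}\bigr)\,F_{12}^{-1}$, apply the algebra map $(\Delta\otimes\mathrm{id})$ factor by factor using $(\Delta\otimes\mathrm{id})\mathcal{R} = \mathcal{R}_{13}\mathcal{R}_{23}$, $(\Delta\otimes\mathrm{id})F = F_{13}F_{23}$ and $(\Delta\otimes\mathrm{id})(F_{21}) = F_{32}F_{31}$ (the last deduced from $(\mathrm{id}\otimes\Delta)F = F_{13}F_{12}$ by reindexing), then simplify the $F$'s using $(F_{ab})^{-1}=F_{ba}$ and the Yang--Baxter relation $F_{12}F_{13}F_{23}=F_{23}F_{13}F_{12}$, and recognize the result as $(\mathcal{R}^F)_{13}(\mathcal{R}^F)_{23}$; the identity $(\mathrm{id}\otimes\Delta^F)\mathcal{R}^F = (\mathcal{R}^F)_{13}(\mathcal{R}^F)_{12}$ is handled symmetrically. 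Together with the bialgebra and antipode verifications this establishes that $U^F_q(\mathfrak{g})$ is a quasitriangular Hopf algebra with the stated structure maps.
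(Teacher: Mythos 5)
The paper does not prove this theorem: it is stated and cited directly from Reshetikhin~\cite{ReshetikhinMultiparameter}, and the subsequent text simply applies it. So there is no ``paper's own proof'' to compare against, and you have in effect supplied the missing verification.

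Your outline is the standard gauge-transformation argument, and the overall structure is sound: deriving the cocycle identity $F_{12}(\Delta\otimes\mathrm{id})F = F_{23}(\mathrm{id}\otimes\Delta)F$ and the counital identities $(\varepsilon\otimes\mathrm{id})F = (\mathrm{id}\otimes\varepsilon)F = 1$ from the hypotheses~\eqref{Fmatrix}; reducing coassociativity of $\Delta^F$ to the cocycle identity; invertibility of $u$ and the antipode axiom via the usual Sweedler bookkeeping; and the intertwining property of $\mathcal{R}^F$ by the telescoping computation you give. One point deserves sharper attention, though, in the hexagon verification: the ingredients you list there ($(\Delta\otimes\mathrm{id})\mathcal{R} = \mathcal{R}_{13}\mathcal{R}_{23}$, $(\Delta\otimes\mathrm{id})F = F_{13}F_{23}$, $(\Delta\otimes\mathrm{id})F_{21} = F_{32}F_{31}$, $(F_{ab})^{-1}=F_{ba}$, and the Yang--Baxter relation for $F$) are not by themselves enough to untangle
\[
F_{12}\,F_{32}F_{31}\,\mathcal{R}_{13}\mathcal{R}_{23}\,F_{32}F_{31}\,F_{21}
\;=\;
F_{31}\mathcal{R}_{13}F_{31}\,F_{32}\mathcal{R}_{23}F_{32}.
\]
To move the $F$-factors past the $\mathcal{R}$-factors one must also use the intertwining relation $\mathcal{R}\Delta(a)=\Delta^{\mathrm{op}}(a)\mathcal{R}$ applied to $F$ itself, in the form $\mathcal{R}_{12}F_{13}F_{23}=F_{23}F_{13}\mathcal{R}_{12}$ and $\mathcal{R}_{23}F_{13}F_{12}=F_{12}F_{13}\mathcal{R}_{23}$ (obtained from $\mathcal{R}_{12}(\Delta\otimes\mathrm{id})X=(\Delta^{\mathrm{op}}\otimes\mathrm{id})X\,\mathcal{R}_{12}$ with $X=F$, etc.). You invoke that relation in the intertwining paragraph but not in the hexagon paragraph, where it is actually the indispensable ingredient; without it the two sides do not visibly reduce to one another. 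Adding this to your toolkit would close the gap, and the verification then matches Reshetikhin's.
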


Consider $U_q(\mathfrak{gl}(n))$ as an algebra over $\mathbb{C}[[h]]$ generated by 
$H_i, X_i, Y_i$  subject to the relations presented in \cite{ReshetikhinMultiparameter}. The relation between $U_q(\mathfrak{g})$ defined in terms of $q$ and defined in terms of $h$ is rather subtle; it is discussed in chapter 9 of \cite{ChariPressleyBook}. For our purposes we only note that $q=e^h$. Set 
\[ F =  \exp \bigl(\sum_{i<j} (H_i \otimes H_j - H_j \otimes H_i) f_{ij}  \bigr)\]
where $f_{ij}$ are non-zero complex numbers. Then $F$ satisfies equation ($\ref{Fmatrix}$) and therefore 
there exists an associated quasitriangular Hopf algebra $U^F_q(\mathfrak{gl}(n))$. 

We note that the action of $U^F_q(\mathfrak{gl}(n))$ on the standard representation $V$ is the same as the action of $U_q(\mathfrak{gl}(n))$. However, the action of $U^F_q(\mathfrak{gl}(n))$ on $V \otimes V$ depends on the comultiplication which in turn depends on $F$. It might therefore be different than the action of $U_q(\mathfrak{gl}(n))$ on $V \otimes V$. 

The action of $\mathcal{R}^F$ on the standard module produces the following solution of the Yang-Baxter equation (\cite{ReshetikhinMultiparameter}):
\[ R^{\gamma} = \sum_i {q} \, e_{i i} \otimes e_{i i} + \sum_{i < j} \gamma^{-1}_{ij} e_{i i} \otimes
   e_{j j} + \sum_{i > j} \gamma_{ij} e_{i i} \otimes
   e_{j j} + \bigl({q} -q^{-1}\bigr) \sum_{i > j} e_{i j} \otimes e_{j i} \]
where $\gamma_{ij} = \exp (2f_{ij} - 2f_{i, j-1} - 2f_{i-1,j} +2f_{i-1,j-1})$.

Since $R^{\gamma}$ is obtained from the action of a universal $R$-matrix, it satisfies the Yang-Baxter equation. Moreover, $\tau R^{\gamma}$ is
a $U^F_q(\mathfrak{gl}(n))$-module homomorphism. 

\begin{proposition}
Proposition~\ref{JimboHeckeTheorem}
holds with $U_q(\mathfrak{gl}(n))$ replaced by $U^F_q(\mathfrak{gl}(n))$, where now the $T_i$'s
act by $q(\tau R^{\gamma})_{i,i+1}$.  
\end{proposition}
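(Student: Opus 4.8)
The plan is to run the proof of Proposition~\ref{JimboHeckeTheorem} essentially unchanged, checking that each of its three ingredients survives the Drinfeld twist; throughout, $q^{2}$ plays the role of the Hecke parameter~$v$. For the commuting actions one invokes the fact recorded just above that $\tau R^{\gamma}\colon V\otimes V\to V\otimes V$ is a $U^{F}_{q}(\mathfrak{gl}(n))$-module homomorphism. Since every tensor slot of $\otimes^{r}V$ is the standard module and the comultiplication $\Delta^{F}$ is coassociative, the operator $(\tau R^{\gamma})_{i,i+1}$ is then an endomorphism of $\otimes^{r}V$ as a $U^{F}_{q}(\mathfrak{gl}(n))$-module, so $T_{i}:=q\,(\tau R^{\gamma})_{i,i+1}$ commutes with the $U^{F}_{q}(\mathfrak{gl}(n))$-action. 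It remains only to check that the $T_{i}$ satisfy the defining relations of $\mathcal{H}_{v}$.

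For the braid relations, when $|i-j|\geq 2$ the operators $T_{i}$ and $T_{j}$ act on disjoint slots and commute, while for $j=i+1$ the relation $T_{i}T_{i+1}T_{i}=T_{i+1}T_{i}T_{i+1}$ is, exactly as in Jimbo's argument, a reformulation of the Yang--Baxter equation $R^{\gamma}_{12}R^{\gamma}_{13}R^{\gamma}_{23}=R^{\gamma}_{23}R^{\gamma}_{13}R^{\gamma}_{12}$ on $V\otimes V\otimes V$. This holds because $R^{\gamma}$ is the action on $V\otimes V$ of the universal $R$-matrix $\mathcal{R}^{F}=F_{21}\mathcal{R}F^{-1}$ of the quasitriangular Hopf algebra $U^{F}_{q}(\mathfrak{gl}(n))$ of Theorem~\ref{twistedhopfalgebra}, and every universal $R$-matrix satisfies the quantum Yang--Baxter equation.

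For the quadratic relation $T_{i}^{2}=(v-1)T_{i}+v$, rather than recompute with the explicit form of $R^{\gamma}$ I would argue by conjugation, since this is a relation in the single operator $\tau R^{\gamma}$ on $V\otimes V$. The element $F$ acts on $V\otimes V$ by an invertible operator $\widehat{F}$ (built from the Cartan part of $U_{q}$), and on $V\otimes V$ the identity $\mathcal{R}^{F}=F_{21}\mathcal{R}F^{-1}$ reads $R^{\gamma}=\widehat{F}_{21}R\,\widehat{F}^{-1}$, where $R$ is the untwisted $R$-matrix of~(\ref{glnRmatrix}) under the identification $\sqrt{v}=q$. Using $\widehat{F}_{21}=\tau\widehat{F}\tau$ one gets $\tau R^{\gamma}=\widehat{F}\,(\tau R)\,\widehat{F}^{-1}$, so on $V\otimes V$ the operator $q\,\tau R^{\gamma}$ is conjugate to Jimbo's operator $q\,\tau R$, which satisfies $(q\,\tau R)^{2}=(q^{2}-1)(q\,\tau R)+q^{2}$ by Proposition~\ref{JimboHeckeTheorem}; hence so does each $T_{i}$. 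Combining the three checks gives the asserted action of $\mathcal{H}_{v}$ on $\otimes^{r}V$ commuting with $U^{F}_{q}(\mathfrak{gl}(n))$. The one point requiring care is that, in contrast to the quadratic relation, the braid relation cannot be transported from Jimbo's setting by conjugation, because the operators $\widehat{F}_{i,i+1}$ and $\widehat{F}_{i+1,i+2}$ on $\otimes^{r}V$ need not commute; one genuinely uses that $R^{\gamma}$ itself -- not merely $R$ -- solves the Yang--Baxter equation, which is precisely what the quasitriangularity of $U^{F}_{q}(\mathfrak{gl}(n))$ supplies.
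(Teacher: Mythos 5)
Your proof is correct and follows essentially the same outline as the paper's (check commutativity, check the quadratic relation, check the braid relation via the Yang--Baxter equation for $R^\gamma$), but you offer a genuinely different argument for the quadratic relation. The paper verifies $T^2=(v-1)T+v$ for $T=q\,\tau R^\gamma$ directly; you derive it by the conjugation identity $\tau R^\gamma=\widehat{F}\,(\tau R)\,\widehat{F}^{-1}$ on $V\otimes V$, which follows correctly from $\mathcal{R}^F=F_{21}\mathcal{R}F^{-1}$ and $\widehat{F}_{21}=\tau\widehat{F}\tau$. Since the quadratic relation is an algebraic identity in a single operator on $V\otimes V$, it is manifestly preserved under conjugation, and this neatly avoids having to recompute with the explicit entries of $R^\gamma$. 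The rest of the argument (YBE from quasitriangularity, commutativity with the $U^F_q$-action because $\tau R^\gamma$ is a morphism and $\Delta^F$ is coassociative) matches the paper.

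One small inaccuracy in your closing remark: you say that $\widehat{F}_{i,i+1}$ and $\widehat{F}_{i+1,i+2}$ on $\otimes^r V$ need not commute. In fact they do commute: $F$ is an exponential of elements of the Cartan subalgebra, so on each pair of tensor slots $\widehat{F}$ acts diagonally in the standard tensor basis, and diagonal operators commute. The real obstruction to transporting the braid relation by conjugation is rather that $\widehat{F}_{i,i+1}$ fails to commute with $(\tau R)_{i+1,i+2}$ (and similarly $\widehat{F}_{i+1,i+2}$ with $(\tau R)_{i,i+1}$), since these share a tensor slot. Hence the site-by-site conjugations do not assemble into a single global conjugation intertwining the two braid representations, and one must indeed invoke the YBE for $R^\gamma$ itself. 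Your conclusion is right; only the stated reason is off.
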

\begin{proof}
The braiding $q\tau R^{\mathbf{\gamma}}$ satisfies the quadratic relation:
\[ T^2 = (v - 1)T + v.\]
It also satisfies the braid relation because $R^{\gamma}$ is a solution of the Yang-Baxter equation. The statement follows  from these two facts, similarly to the proof of Proposition~\ref{JimboHeckeTheorem}.
\end{proof}

In the affine case, $U_q(\mathfrak{gl}(n))$ is a subalgebra of $U_q( \widehat{\mathfrak{gl}} (n))$ 
as mentioned above. It follows that $F \in U_q(\mathfrak{gl}(n)) \otimes U_q(\mathfrak{gl}(n)) \subset U_q(\widehat{\mathfrak{gl}}(n)) \otimes U_q(\widehat{\mathfrak{gl}}(n))$; 
denote by $U^F_q(\widehat{\mathfrak{gl}}(n))$ the quasitriangular Hopf algebra obtained from 
$U_q(\widehat{\mathfrak{gl}}(n))$ after twisting with $F$. 

For every standard $U_q(\widehat{\mathfrak{gl}}(n))$-module $V_x$, there is a standard $U^F_q(\widehat{\mathfrak{gl}}(n))$-module (which we will also denote by $V_x$) on which $U_q(\widehat{\mathfrak{gl}}(n))$ and $U^F_q(\widehat{\mathfrak{gl}}(n))$ act identically (remember that as algebras $U_q(\widehat{\mathfrak{gl}}(n))$ and $U^F_q(\widehat{\mathfrak{gl}}(n))$ are the same). The action of $\mathcal{R}^F$ on $V_x \otimes V_y$ 
can be computed explicitly because we know the action of $F$ on $V_x \otimes V_y$; it is 
the same as the action of $F$ on $V \otimes V$ where $V$ is the standard module of $U_q(\mathfrak{gl}(n))$. 
We obtain a solution of the parametrized Yang-Baxter equation that takes the following form:
\[ R^{\gamma}(x) = \sum_i (q-xq^{-1}) \, e_{i i} \otimes e_{i i} + \sum_{i < j} \gamma^{-1}_{ij}(1-x) e_{i i} \otimes
   e_{j j} + \sum_{i > j} \gamma_{ji} (1-x)e_{i i} \otimes
   e_{j j} \]
\[+ \bigl({q} -q^{-1}\bigr) \sum_{i > j} e_{i j} \otimes e_{j i} + x\bigl({q} -q^{-1}\bigr) \sum_{i < j} e_{i j} \otimes e_{j i} \]
with the property that $\tau R^{\gamma} (xy^{-1}): V_x \otimes V_y \to V_y \otimes V_x$ is 
a $U^F_q(\widehat{\mathfrak{gl}}(n))$-module homomorphism.

\begin{proposition}
\label{triangularityDrinfeld}
The $R$-matrix $R^{\gamma}(x)$ satisfies equation \textnormal{(}$\ref{doubler}$\textnormal{)}:
\[   \tau R^{\gamma}(x)\circ \tau R^{\gamma}(x^{-1})=\bigl(q-\frac{x}{q}\bigr)\bigl(q-\frac{1}{xq}\bigr).\]
 In the limit $x \to 0$, the following equation holds: $R^{\gamma}(0) = R^{\gamma}$.
\end{proposition}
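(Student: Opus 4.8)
The plan is to deduce both assertions directly from the twist formalism of Theorem~\ref{twistedhopfalgebra}, with essentially no matrix computation. By that theorem the twisted universal $R$-matrix is $\mathcal{R}^F = F_{21}\mathcal{R}F^{-1}$, and since $F$ lies in $U_q(\mathfrak{gl}(n))\otimes U_q(\mathfrak{gl}(n))$ and acts on any tensor product of standard evaluation modules exactly as it acts on $V\otimes V$ (the point recalled just before the proposition), evaluating $\mathcal{R}^F$ there yields, as operators on $V\otimes V$,
\begin{equation*}
  R^{\gamma}(x) = F_{21}\,R(x)\,F^{-1},
\end{equation*}
where $R(x)$ is the untwisted affine $R$-matrix of (\ref{glnhatRmatrix}) (with $q$ in place of $\sqrt{v}$) and $F$ now denotes the operator induced on $V\otimes V$. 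The second claim is then immediate: $R^{\gamma} = F_{21}RF^{-1}$ is the $x\to 0$ specialization of $R^{\gamma}(x)=F_{21}R(x)F^{-1}$, and $R(0)=R$ follows at once from the definition $R(x)=R-xR_{21}^{-1}$, so $R^{\gamma}(0)=R^{\gamma}$. (Alternatively one may simply set $x=0$ in the displayed formula for $R^{\gamma}(x)$: the diagonal coefficient $(q-xq^{-1})$ becomes $q$, each factor $(1-x)$ becomes $1$, the term $x(q-q^{-1})\sum_{i<j}e_{ij}\otimes e_{ji}$ drops out, and what remains is $R^{\gamma}$ term by term.)

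For the unitarity relation I would write $\check{R}(x):=\tau R(x)$ and $\check{R}^{\gamma}(x):=\tau R^{\gamma}(x)$ for the associated braidings. Regarding $F_{21}$ as the operator $\tau F \tau$ on $V\otimes V$ and using $\tau^2=\mathrm{id}$ gives
\begin{equation*}
  \check{R}^{\gamma}(x) = \tau F_{21}R(x)F^{-1} = F\,\tau R(x)\,F^{-1} = F\,\check{R}(x)\,F^{-1},
\end{equation*}
so that $\check{R}^{\gamma}$ is obtained from the untwisted braiding by conjugation by $F$. Consequently
\begin{equation*}
  \check{R}^{\gamma}(x)\circ\check{R}^{\gamma}(x^{-1}) = F\bigl(\check{R}(x)\circ\check{R}(x^{-1})\bigr)F^{-1},
\end{equation*}
and by (\ref{doubler}), with $\sqrt{v}$ identified with $q$, the inner composition equals the scalar $\bigl(q-\tfrac{x}{q}\bigr)\bigl(q-\tfrac{1}{xq}\bigr)$; a scalar commutes with $F$, which yields the claimed identity.

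There is no substantial obstacle: the argument is a formal consequence of the definition of the Drinfeld twist together with the already-established unitarity of the untwisted $R$-matrix. The only things that require attention are bookkeeping — that the twist acts without spectral-parameter dependence on evaluation modules (exactly the assertion preceding the proposition), and that the normalizations of Section~\ref{HeckeRM} ($\sqrt{v}$) and Section~\ref{Drinfeldtwist} ($q$) are matched. As a cross-check one may verify both identities by hand from the explicit matrix formula: $V\otimes V$ splits under the diagonal torus into one-dimensional weight spaces $e_i\otimes e_i$ and two-dimensional spaces $\langle e_i\otimes e_j,\ e_j\otimes e_i\rangle$ for $i\neq j$, on each of which every operator involved is at most $2\times 2$, so the computation is short; I expect the conjugation argument above to be the cleaner writeup.
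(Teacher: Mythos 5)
Your proof is correct, but it takes a genuinely different route from the paper's. The paper dispatches the unitarity identity by pointing to a direct matrix computation in the proof of Theorem~5 of \cite{BrubakerBuciumasBump}, and disposes of the $x\to 0$ claim by inspection of the explicit formula for $R^{\gamma}(x)$. You instead derive everything from the twist formalism: since $\mathcal{R}^F = F_{21}\mathcal{R}F^{-1}$ and $F$ acts on evaluation modules with no spectral-parameter dependence, one has $R^{\gamma}(x) = F_{21}R(x)F^{-1}$ as operators on $V\otimes V$; writing $F_{21}=\tau F\tau$ and cancelling $\tau^2=\mathrm{id}$ gives $\tau R^{\gamma}(x) = F\,(\tau R(x))\,F^{-1}$, so the composition $\tau R^{\gamma}(x)\circ\tau R^{\gamma}(x^{-1})$ is the $F$-conjugate of the untwisted composition, which is the scalar in (\ref{doubler}) and hence fixed by conjugation. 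This is a cleaner argument: it reduces the twisted unitarity to the already-established untwisted one with no new matrix manipulation, and it makes the $x\to 0$ claim a one-liner ($R(0)=R$ forces $R^{\gamma}(0)=F_{21}RF^{-1}=R^{\gamma}$), whereas the paper's computation (not reproduced in the text) must be redone from scratch in the twisted setting. The only bookkeeping you rightly flag — that the twist acts identically on all evaluation modules, and that Section~\ref{HeckeRM}'s $\sqrt{v}$ is identified with Section~\ref{Drinfeldtwist}'s $q$ — is handled correctly. As a bonus, your conjugation argument is insensitive to the small index-bookkeeping ambiguity in the displayed $\gamma_{ij}$-coefficients for $i>j$ in the formulas for $R^{\gamma}$ and $R^{\gamma}(x)$, whereas matching those formulas term by term requires resolving that convention.
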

\begin{proof}
The first fact follows from a computation of the left side. We will not show it here, but instead point the reader to the proof of Theorem 5 in \cite{BrubakerBuciumasBump}, where a similar computation is shown. The second fact follows from setting $x=0$ in the formula of $R^{\gamma}(x)$. 
\end{proof}

\begin{remark}
We note that for $\gamma_{ij} = 1$, the $R$-matrices $R^{\gamma}$ and $R^{\gamma}(x)$ become the usual $R$-matrix matrices $R$ and $R(x)$ in equations~\eqref{glnRmatrix} and~\eqref{glnhatRmatrix}.  
\end{remark}

We can now show that Theorem~\ref{heckeybeinstance} works in essentially the same way if we replace $U_q( \widehat{\mathfrak{gl}} (n))$ by $U^F_q( \widehat{\mathfrak{gl}} (n))$. Let $\mathcal{M}^F(w\mathbf{z})$ to be the $U^F_q( \widehat{\mathfrak{gl}} (n))$-module $V_{w(z_1)} \otimes \cdots \otimes V_{w(z_r)}$.
Define $\Aa_{s_i}^{w\mathbf{z}}: \mathcal{M}^F(w\mathbf{z})\to \mathcal{M}^F(s_iw\mathbf{z})$
to be the map
\begin{equation}
  \label{baxexampleatwist}
  \Aa_{s_i}^{w\mathbf{z}}=\frac{\sqrt{v}}{1-(w\mathbf{z})^{\alpha_i^\vee}}(\tau
  R^{\gamma}(w\mathbf{z}^{\alpha_i^\vee}))_{i,i+1}.
\end{equation}
where $\sqrt{v} = q$. As before, we define $\mathfrak{M}^F(\mathbf{z})=\bigoplus_{w\in W} \mathcal{M}^F(w\mathbf{z})$.
\begin{proposition}
  \label{heckeybeinstanceDrinfeld}
  With this data, the assumptions of
  Theorems~\ref{hecketheorem} and~\ref{hecketheoremaffine}
  are satisfied. The representation $\mathfrak{M}^F(\mathbf{z})$
  has commuting actions of $\widetilde{H}_v$ and
  $U_{\sqrt{v}}^F(\widehat{\mathfrak{gl}}(n))$.
\end{proposition}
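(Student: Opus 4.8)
The plan is to mimic the proof of Theorem~\ref{heckeybeinstance} almost verbatim, since the structural input is identical and only the $R$-matrix has been deformed by the Drinfeld twist $F$. First I would observe that by Proposition~\ref{triangularityDrinfeld}, the twisted $R$-matrix $R^\gamma(x)$ satisfies the scalar identity~(\ref{doubler}) in exactly the same form as $R(x)$; since the normalizing constant $\frac{\sqrt{v}}{1-(w\mathbf{z})^{\alpha_i^\vee}}$ in (\ref{baxexampleatwist}) is the same one used in (\ref{baxexamplea}), the composite $\Aa^{s_i\mathbf{z}}_{s_i}\circ\Aa_{s_i}^{\mathbf{z}}$ acts by precisely the scalar (\ref{doublesch}). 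This verifies the first of the Assumptions~\ref{assofuandme}.

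Next I would check the braid relation (\ref{abraida}). This follows, as in the untwisted case, from the fact that $\tau R^\gamma(xy^{-1})\colon V_x\otimes V_y\to V_y\otimes V_x$ is a $U^F_q(\widehat{\mathfrak{gl}}(n))$-module homomorphism together with the parametrized Yang-Baxter equation satisfied by $R^\gamma(x)$; both facts are recorded in Section~\ref{Drinfeldtwist} (the former just after the formula for $R^\gamma(x)$, the latter being the statement that $R^\gamma(x)$ solves the parametrized YBE, which holds because it arises from the action of a universal $R$-matrix). One must be slightly careful with bookkeeping about which tensor legs the maps $(\tau R^\gamma(\cdot))_{i,i+1}$ act on and how the spectral parameters transform under $w$, but this is identical to the untwisted computation and no new phenomenon appears. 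The case where $s_is_j$ commute (\ref{abraidb}) is immediate since the two maps act on disjoint tensor legs. Hence both parts of Assumptions~\ref{assofuandme} hold, and Theorems~\ref{hecketheorem} and~\ref{hecketheoremaffine} apply to give the $\widetilde{\mathcal{H}}_v$-action on $\mathfrak{M}^F(\mathbf{z})$.

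Finally, for the commuting $U^F_{\sqrt{v}}(\widehat{\mathfrak{gl}}(n))$-action: the space $\mathfrak{M}^F(\mathbf{z})=\bigoplus_{w\in W}V_{w(z_1)}\otimes\cdots\otimes V_{w(z_r)}$ is a direct sum of $U^F_{\sqrt{v}}(\widehat{\mathfrak{gl}}(n))$-modules, and each operator $\Aa_{s_i}^{w\mathbf{z}}$, being a scalar multiple of a module homomorphism $\tau R^\gamma(\cdot)$ applied in tensor positions $i,i+1$, intertwines the quantum group action. Therefore the operators $\mathfrak{T}_i$ defined by (\ref{tiaction}) — which are built from the $\Aa_{s_i}$ and from scalars depending only on $\mathbf{z}$ — commute with the $U^F_{\sqrt{v}}(\widehat{\mathfrak{gl}}(n))$-action, exactly as in the proof of Theorem~\ref{heckeybeinstance}. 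Together with (\ref{thetaact}), whose operators $\theta_\lambda$ act by scalars on each summand and hence also commute with the quantum group, this gives the desired commuting pair of actions.

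I do not expect any genuine obstacle here: every ingredient has already been assembled in Section~\ref{Drinfeldtwist}, and the proof is a routine transcription of the proof of Theorem~\ref{heckeybeinstance} with $R(x)$ replaced by $R^\gamma(x)$ and $U_{\sqrt{v}}(\widehat{\mathfrak{gl}}(n))$ replaced by $U^F_{\sqrt{v}}(\widehat{\mathfrak{gl}}(n))$. The only point requiring a modicum of care — and it is the closest thing to a ``hard part'' — is confirming that the twist does not disturb the scalar in (\ref{doubler}) nor the validity of the parametrized Yang-Baxter equation, but both are precisely the content of Proposition~\ref{triangularityDrinfeld} and the construction of $R^\gamma(x)$ from a universal $R$-matrix, so one simply cites them.
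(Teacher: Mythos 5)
Your proposal is correct and follows essentially the same approach as the paper's own proof, which simply cites Theorem~\ref{heckeybeinstance} and notes that the Yang-Baxter equation for $R^\gamma(x)$, the module-homomorphism property of $\tau R^\gamma$, and the scalar identity of Proposition~\ref{triangularityDrinfeld} supply exactly what is needed to repeat that argument. You spell out a few more of the bookkeeping details, but there is no substantive difference.
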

\begin{proof}
The proof is identical to that of Theorem~\ref{heckeybeinstance}. The fact that $R^{\gamma}(x)$ satisfies the Yang-Baxter equation and that $\Aa_{s_i}^{w\mathbf{z}}$ is a $U_{\sqrt{v}}^F(\widehat{\mathfrak{gl}}(n))$-module homomorphism was explained above. The composition $\Aa^{s_i \mathbf{z}}_{s_i} \circ \Aa_{s_i}^{\mathbf{z}}$ equals the constant (\ref{doublesch}) due to Proposition~\ref{triangularityDrinfeld}.
\end{proof}
 
\section{Hecke modules from the metaplectic group}

In this section we will exhibit instances of our representation schema coming
from principal representations of the metaplectic group. This gives a
representation of Hecke algebra associated with the metaplectic L-group,
which we review.

Our first task will be to discuss metaplectic groups and
the metaplectic L-group. We will roughly follow the
discussion of McNamara~\cite{McNamaraPrincipal},
whose treatment of metaplectic groups is derived from
Matsumoto~\cite{Matsumoto} and Brylinski and Deligne
{\cite{BrylinskiDeligne}}.

As in Section 2, let $F$ be a nonarchimedean local field with ring of integers
$\mathfrak{o}$. We choose a local uniformizer $\varpi$ and let $q$ be residue
cardinality of $F$. Fix a positive integer $n$. We suppose that $q \equiv 1$
(mod $2 n$) so that $F$ contains the group $\mu_{2 n}$ of $2 n$-th roots of
unity. Let $(\;,\;)_{2n}$ denote the $2n$-th order Hilbert symbol for $F$.
If $m|2n$, let $(\;,\;)_m=(\;,\;)_{2n}^{2n/m}$. If $m=n$, we will denote
the $n$-th order Hilbert symbol $(\;,\;)_n$ as simply $(\;,\;)$.

Note that $\mu_n$ may be embedded both in $F^{\times}$ and
$\mathbb{C}^{\times}$. We will identify these. Thus there is an implicitly fixed 
embedding $\mathbf{j}$ of $\mu_n \subseteq F^{\times}$ into
$\mathbb{C}^{\times}$, which we will usually suppress from the notation.

Let $G (F)$ be the $F$-points of the split reductive group $G$, with $T (F)$ a
split maximal torus and $K^{\circ} = G (\mathfrak{o})$ a maximal compact
subgroup. As usual, $\Lambda$ will be the cocharacter group $X_*(T)$,
which may be identified with the character group $X^\ast(\widehat{T})$ of
the dual torus. Since \cite{McNamaraPrincipal} will be basic reference
in this section, we point out that $\Lambda$ is denoted $Y$ there,
and that what he denotes $\Lambda$ is our~$\Lambda^{(n)}$.

Let $\tilde{G}$ be an \textit{$n$-fold metaplectic cover of $G (F)$}. By
this we mean a nonalgebraic central extension
\begin{equation}
\label{metaone}
 1 \to \mu_{n} \to \tilde{G} \xrightarrow{p} G (F) \to 1
\end{equation}
or more generally an extension
\begin{equation}
\label{metatwo}
 1 \to \mu_m \to \tilde{G} \xrightarrow{p} G (F) \to 1
\end{equation}
where $n|m$,
such that the corresponding class $[\sigma]$ in $H^2(G(F),\mu_n)$ is
\textit{essentially of degree $n$} in the sense that $[\sigma^n]$ becomes
trivial in $H^2(G(F),\mathbb{C}^\times)$. As we will explain, such
a cover may be described in terms of a $W$-invariant symmetric bilinear
form $B : \Lambda \times \Lambda \longrightarrow \mathbb{Z}$ on $\Lambda$.

The relationship between $\widetilde{G}$ and the symmetric
bilinear form $B$ is then to be encoded in the following
fundamental commutation formula.
Let $\mu,\nu\in \Lambda$,
and let $a,b\in F^\times$. Then $a^\mu$ and $b^\nu$ will
represent the images of $a$ and $b$ under $\mu$ and $\nu$,
interpreted as cocharacters of $T$. Let $\widetilde{T}(F)$
be the preimage of $T(F)$ in $\widetilde{G}(F)$. It is
a 2-step nilpotent group. The commutation relation to be
satisfied is:
\begin{equation}
  \label{commutationbd} [x,y] = (a, b)^{B (\mu, \nu)}\;.
\end{equation}
where $x,y\in\widetilde{T}(F)$ satisfy $p(x)=a^\mu$ and $p(y)=b^\nu$.

We note that McNamara (like us) makes the simplifying assumption that the
ground field contains $\mu_{2 n}$. Also like us, he only requires the
form $B$ to be even on the lattice $\Lambda_{\text{coroot}}$ spanned
by $\Phi^\vee$. With these assumptions, starting from an extension
(\ref{metaone}) he proves the existence of a metaplectic cover satisfying
(\ref{commutationbd}).

In order to accomodate the following key example, we will
slightly modify his setup, instead starting from an extension
(\ref{metatwo}). In this example of (\ref{metatwo}) we have~$m=2n$.

\begin{example}
  \label{glnmetaex}
  Let $G=GL(r)$. We identify the
cocharacter group of the diagonal torus $T$ with $\mathbb{Z}^r$
in the usual way, so that if $\lambda=(\lambda_1,\cdots\lambda_r)$
then
\[t^\lambda = \left(\begin{array}{ccc}t^{\lambda_1}\\&\ddots\\&& t^{\lambda_r}\end{array}\right).\]
Let $\sigma$ be the cocycle in
$H^2(T(F),\mu_n)$ defined by
\[c\left(\left(\begin{array}{ccc}t_1\\&\ddots\\&& t_r\end{array}\right),
\left(\begin{array}{ccc}u_1\\&\ddots\\&& u_r\end{array}\right)\right)=
\left(\prod t_i,\prod u_i\right)_{2n}\prod_{i>j}(t_i,u_j)^{-1}.\]
\end{example}

\begin{proposition}
This cocycle extends to a class in $H^2(G(F),\mu_{2n})$ that is
essentially of degree $n$. If $B$ is the usual dot product on
$X_*(T)=\mathbb{Z}^r$,
then (\ref{commutationbd}) is satisfied.
\end{proposition}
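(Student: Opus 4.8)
The plan is to verify, in turn, the three assertions: that the torus cocycle $c$ of Example~\ref{glnmetaex} is restricted from a class in $H^2(G(F),\mu_{2n})$ with $G(F)=\GL(r,F)$; that this class is essentially of degree $n$; and that the induced central extension of $\widetilde T(F)$ satisfies~(\ref{commutationbd}) when $B$ is the dot product. First I would factor $c=c_1\cdot c_2$, with
\[ c_1(t,u)=\prod_{i>j}(t_i,u_j)^{-1},\qquad c_2(t,u)=\Bigl(\textstyle\prod_i t_i,\prod_i u_i\Bigr)_{2n}=(\det t,\det u)_{2n}, \]
both of which are $2$-cocycles on $T(F)$. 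The factor $c_1$ is, up to a coboundary of $T(F)$, the restriction of the $n$-fold metaplectic cover of $\GL(r,F)$ attached to the bilinear form $B'(\mu,\nu)=\sum_i\mu_i\nu_i-(\sum_i\mu_i)(\sum_i\nu_i)$, which is $W$-invariant and even on the coroot lattice; such a cover exists by Kazhdan--Patterson~\cite{KazhdanPatterson} (equivalently by Brylinski--Deligne and McNamara~\cite{BrylinskiDeligne,McNamaraPrincipal}), giving a $\mu_n$-valued cocycle $\sigma_1$ on $\GL(r,F)$ restricting to $c_1$. The factor $c_2$ is the restriction of $\det^{\ast}h$, where $h(x,y)=(x,y)_{2n}$ is the standard cocycle on $F^\times$ and $\det\colon\GL(r,F)\to F^\times$ is a homomorphism, so $\det^{\ast}h$ is a $\mu_{2n}$-valued cocycle on $\GL(r,F)$. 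Then $\sigma:=\sigma_1\cdot\det^{\ast}h$ is a $\mu_{2n}$-valued cocycle on $\GL(r,F)$ whose restriction to $T(F)$ is cohomologous to $c$, which is the asserted extension.

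For the ``essentially of degree $n$'' claim, raise $\sigma$ to the $n$-th power pointwise. Since $\sigma_1$ is $\mu_n$-valued we have $\sigma_1^{\,n}\equiv1$, and the Hilbert-symbol identity $(\;,\;)_{2n}^{\,n}=(\;,\;)_2$ gives $\sigma^{\,n}=\det^{\ast}(h^{n})=\det^{\ast}\bigl((\;,\;)_2\bigr)$. It thus suffices to show $(x,y)_2$ is a coboundary on $F^\times$. Its commutator pairing is $(x,y)_2(y,x)_2^{-1}=(x,y)_2^{\,2}=1$, so the corresponding double cover of $F^\times$ is abelian; writing $F^\times=\varpi^{\Z}\times\mathfrak{o}^\times$, the cover splits over $\varpi^{\Z}$ (every central extension of $\Z$ splits) and over $\mathfrak{o}^\times$ (the tame symbol $(u,v)_2$ of two units is trivial, since $q$ is odd), and, the cover being abelian, these two homomorphic sections combine to a homomorphic section of $F^\times$. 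Hence $(x,y)_2=\beta(x)\beta(y)\beta(xy)^{-1}$ for some $\beta\colon F^\times\to\mu_2$, so $\sigma^{\,n}$, being $\det^{\ast}$ of this, is likewise a coboundary, and $[\sigma^{\,n}]$ is trivial already in $H^2(G(F),\mu_2)\subseteq H^2(G(F),\C^\times)$.

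For~(\ref{commutationbd}), compute the commutator directly from $c$, using that in a central extension the commutator of lifts of two downstairs-commuting elements $p(x),p(y)$ equals $c(p(x),p(y))\,c(p(y),p(x))^{-1}$, a quantity depending only on $[c]$. With $p(x)=a^\mu$ and $p(y)=b^\nu$, bimultiplicativity of the Hilbert symbols gives
\[ c(a^\mu,b^\nu)=(a,b)_{2n}^{(\sum_i\mu_i)(\sum_i\nu_i)}\prod_{i>j}(a,b)_n^{-\mu_i\nu_j}, \]
\[ c(b^\nu,a^\mu)=(a,b)_{2n}^{-(\sum_i\mu_i)(\sum_i\nu_i)}\prod_{i>j}(a,b)_n^{\mu_j\nu_i}, \]
where the signs in the second line come from $(b,a)=(a,b)^{-1}$. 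Multiplying, and using $(\;,\;)_{2n}^{2}=(\;,\;)_n$ together with the elementary identity $\sum_{i>j}(\mu_i\nu_j+\mu_j\nu_i)=(\sum_i\mu_i)(\sum_i\nu_i)-\sum_i\mu_i\nu_i$, the exponents telescope and the commutator becomes $(a,b)_n^{\sum_i\mu_i\nu_i}=(a,b)^{B(\mu,\nu)}$ with $B$ the dot product, which is~(\ref{commutationbd}).

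The step I expect to be the real obstacle is the ``essentially of degree $n$'' part: the extension to $\GL(r,F)$ is a citation and the commutation relation is a mechanical bimultiplicative computation, but the vanishing of $\det^{\ast}(\;,\;)_2$ in $H^2(G(F),\C^\times)$ genuinely uses the hypothesis that $q$ is odd (equivalently, that the quadratic Hilbert cover of $F^\times$ splits), and is the one point where a small arithmetic input rather than bookkeeping is required. A secondary point to nail down carefully is the normalization making $\sigma_1$ restrict to $c_1$; if one preferred not to invoke the general existence theorem, one could instead recognize $\prod_{i>j}(t_i,u_j)^{-1}$ directly as the torus restriction of the explicit Kazhdan--Patterson cocycle on $\GL(r,F)$.
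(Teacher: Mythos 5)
Your proof is correct in its overall structure and in its final computation of the commutation relation, but it departs from the paper's route at one notable point and has one step that deserves tightening.

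For the ``essentially of degree $n$'' claim, the paper takes a different and shorter route. After identifying $\sigma^n$ with $\det^{\ast}(\;,\;)_2$, it invokes Weil's $\gamma$-invariant of quadratic forms: setting $\gamma_0(a)=\gamma(x^2-ay^2)$, the identity $\gamma_0(a)\gamma_0(b)/\gamma_0(ab)=(a,b)_2$ (from the end of Section~28 of Weil's paper) shows $g\mapsto\gamma_0(\det g)$ splits $\sigma^n$ in $H^2(G,\mu_8)$. Your argument instead splits $(\;,\;)_2$ by hand using the decomposition $F^\times=\varpi^{\Z}\times\mathfrak{o}^\times$: triviality over $\varpi^{\Z}$ because $\Z$ is free, triviality over $\mathfrak{o}^\times$ because the tame quadratic symbol of units vanishes when $q$ is odd, and compatibility because the extension is abelian (its commutator pairing $(x,y)_2^2$ is trivial). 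This is more elementary and, as you observed, makes explicit where the residue characteristic hypothesis enters; the paper's version is slicker and does not single out $p\neq 2$ since $\gamma$ is defined uniformly. Either is acceptable. You also supply the explicit bimultiplicative computation verifying~(\ref{commutationbd}), which the paper dismisses as ``a simple computation''; your calculation is correct, including the telescoping identity $\sum_{i>j}(\mu_i\nu_j+\mu_j\nu_i)=(\sum\mu_i)(\sum\nu_i)-\sum\mu_i\nu_i$ and the normalization $(\;,\;)_{2n}^2=(\;,\;)_n$.

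The one step you should sharpen is the assertion that $c_1=\prod_{i>j}(t_i,u_j)^{-1}$ is, up to a coboundary on $T(F)$, the restriction of a Kazhdan--Patterson or Brylinski--Deligne cover attached to the form $B'$. Matching the commutator pairing of $c_1$ with $B'$ is necessary but not sufficient for cohomologous: two cocycles on the abelian group $T(F)$ can have the same alternating pairing and still differ by a nontrivial \emph{abelian} extension in $\operatorname{Ext}^1(T(F),\mu_n)$, and with $q\equiv1\pmod{2n}$ this Ext group is generally nonzero because of the roots of unity in $\mathfrak{o}^\times$. The paper avoids this issue entirely by not attempting to extend a torus cocycle: it observes that $c$ as written is already a modification of the Kazhdan--Patterson formulas of \cite{KazhdanPatterson}~Section~0.1, which are defined on all of $\GL(r,F)$, so the extension is there from the start. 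Your factorization $c=c_1c_2$ is a nice observation, and your $c_2$ piece is handled cleanly via $\det^\ast$, but for $c_1$ you should either exhibit the Kazhdan--Patterson cocycle explicitly and check that its torus restriction literally equals $c_1$, or follow the paper in treating $c$ directly as a restriction rather than as a torus class to be extended.
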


\begin{proof}
The cocycle is a modification of the cocycles employed by Kazhdan and
Patterson~\cite{KazhdanPatterson} Section~0.1, so as in their paper it may be
deduced from Matsumoto's theorem for $\SL(n+1)$ that it can be extended to a
cocycle in $H^2(G(F),\mu_{2n})$. Raising it to the $n$-th power gives
the cocycle
\[(g,h)\mapsto(\det(g),\det(h))_2.\]
The class of this splits in $H^2(G(F),\mu_8)$. Indeed,
Weil~\cite{WeilCertains} associated an $8$-th root of unity $\gamma(Q)$
with every quadratic form over a local field $F$. If 
$a\in F^\times$, then we apply this to the quadratic form
$x^2-a y^2$ on $F^2$. Defining $\gamma_0(a)$ to be $\gamma(x^2-a y^2)$ it
follows from the last formula on page 176 of~\cite{WeilCertains}
that
\[\gamma_0(a)\gamma_0(b)/\gamma_0(ab)=(a,b)_2.\]
Therefore the section $g\mapsto\gamma_0(\det(g))$ splits the
cocycle $[\sigma^n]$ in $H^2(G,\mu_8)$ or
$H^2(G,\mathbb{C}^\times)$. The fact that $B$ coincides
with the usual dot product is a simple computation.
\end{proof}

We return to the general case. Our setup is slightly
different than McNamara's and so we will prove a couple
of foundational lemmas. 

We will \textit{assume} that we can find a symmetric bilinear form $B$ on
$\Lambda$ such that (\ref{commutationbd}) is satisfied, but for this to be
reasonable we would like to know that the element of $\mu_{m}$ on the left-hand
side is actually an $n$-th root of unity. To check this, let us choose a
section $\mathbf{s}:G(F)\to\widetilde{G}(F)$.

\begin{lemma}
  Suppose that $\widetilde{G}$ is an extension (\ref{metatwo})
  whose class $[\sigma]\in H^2(G,\mu_{m})$ with $n|m$ is represented by a
  cocycle $\sigma$. Assume that the class of $\sigma^n$ is trivial
  in $H^2(G,\C^\times)$. Then $[x,y]\in\mu_n$ for
  $x,y\in\widetilde{T}(F)$.
\end{lemma}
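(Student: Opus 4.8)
The plan is to reduce the lemma to the observation that, because $T(F)$ is abelian, any $2$-cocycle on $T(F)$ that becomes a coboundary over some abelian coefficient group is automatically symmetric, and hence contributes trivially to the commutator pairing.

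First I would make the commutator of two elements of $\widetilde{T}(F)$ explicit in terms of $\sigma$. Using the section $\mathbf{s}$ already fixed, let $x,y\in\widetilde{T}(F)$ with $p(x)=a$, $p(y)=b$. Writing $x=z_1\mathbf{s}(a)$ and $y=z_2\mathbf{s}(b)$ with $z_1,z_2\in\mu_m$ central, the central factors drop out of the commutator, so $[x,y]=[\mathbf{s}(a),\mathbf{s}(b)]$; since $T(F)$ is abelian, $\mathbf{s}(a)\mathbf{s}(b)=\sigma(a,b)\mathbf{s}(ab)$ while $\mathbf{s}(b)\mathbf{s}(a)=\sigma(b,a)\mathbf{s}(ab)$, whence $[x,y]=\sigma(a,b)\sigma(b,a)^{-1}$. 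This lies in $\mu_m$, consistent with $p([x,y])=[a,b]=1$. In particular the commutator gives an alternating pairing $T(F)\times T(F)\to\mu_m$ depending only on $[\sigma]|_{T(F)}$.

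Next I would raise to the $n$-th power. Since $\zeta\mapsto\zeta^n$ is a homomorphism, $\sigma^n$ is again a $2$-cocycle and $[x,y]^n=\sigma^n(a,b)\sigma^n(b,a)^{-1}$. By hypothesis the class $[\sigma^n]$ is trivial in $H^2(G(F),\mathbb{C}^\times)$, so there is a function $\tau:G(F)\to\mathbb{C}^\times$ with $\sigma^n(g,h)=\tau(g)\tau(h)\tau(gh)^{-1}$ for all $g,h$. Evaluating at $(g,h)=(a,b)$ and at $(g,h)=(b,a)$ and using $ab=ba$ yields $\sigma^n(a,b)=\sigma^n(b,a)$ in $\mathbb{C}^\times$, hence in $\mu_m$ by injectivity of the chosen embedding $\mu_m\hookrightarrow\mathbb{C}^\times$. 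Therefore $[x,y]^n=1$, and since $n\mid m$ the elements of $\mu_m$ killed by $n$ are exactly $\mu_n$; thus $[x,y]\in\mu_n$.

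I do not expect a genuine obstacle here. The only points needing a moment's care are the independence of $[x,y]$ from the chosen lifts of $a$ and $b$ (handled by discarding the central factors above), and the fact that it suffices to know $[\sigma^n]$ dies in $\mathbb{C}^\times$-valued cohomology rather than in $\mu_m$-valued cohomology — which is enough because we only ever use the resulting scalar identity $\sigma^n(a,b)=\sigma^n(b,a)$ and then transport it back inside $\mu_m$ along the fixed embedding.
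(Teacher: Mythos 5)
Your proof is correct and follows essentially the same route as the paper: expressing the commutator as $\sigma(a,b)\sigma(b,a)^{-1}$, raising to the $n$-th power, and using the coboundary expression for $\sigma^n$ together with $ab=ba$ to conclude that $[x,y]^n=1$. You spell out a couple of points the paper leaves implicit (discarding central factors when reducing to section elements, and the transport of the identity from $\mathbb{C}^\times$ back into $\mu_m$), but the underlying argument is identical.
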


\begin{proof}
  It is enough to check this for $x = \mathbf{s} (t)$ and $y = \mathbf{s} (u)$.
  We may also assume that the cocycle
  \begin{equation}
    \label{sgcocycle} \sigma (g, h) = \mathbf{s} (g)\, \mathbf{s} (h) \, \mathbf{s}
    (gh)^{- 1} .
  \end{equation}
  Let us assume that $\mathbf{s} (1) = 1$ so $\sigma (1, 1) = 1$. Our assumption
  implies that there is a map $a : G (F) \longrightarrow \C^{\times}$
  such that
  \begin{equation}
    \label{splita} \sigma (g, h)^n = \frac{a (gh)}{a (g) a (h)} \hspace{0.27em} .
  \end{equation}
  Since $\sigma (1, 1) = 1$ we have $a (1) = 1$. Since $tu=ut$, equation
  (\ref{sgcocycle}) implies that
  \begin{equation}
    \label{xyccy} [x, y] = \sigma (t, u) \sigma (u, t)^{- 1} .
  \end{equation}
  Raising this to the $n$-power and substituting (\ref{splita}) everything
  cancels, so $[x, y]$ is an $n$-th root of unity.
\end{proof}

Brylinski and Deligne require the quadratic form $B$ to be even, with
associated integer-valued quadratic form $Q (\mu) = B (\mu,\mu) / 2$ on the
weight lattice. Due to our modified setup, we (like McNamara) only claim the
restriction of $B$ to the lattice $\Lambda_{\text{coroot}}$ generated by
$\Phi^{\vee}$ to be even. So for us $Q$ is defined on
$\Lambda_{\text{coroot}}$. An integer-valued symmetric bilinear
form is even on a lattice if it is even on a set of generators,
so for this the following Lemma is sufficient.

\begin{lemma}
  Suppose that $\widetilde{G}$ is an extension (\ref{metatwo})
  whose class in $H^2(G,\mu_{m})$ is essentially of degree~$n$.
  Let $\alpha^{\vee} \in \Phi^{\vee}$. Then there is an even integer $2 k$
  such that for $a, b \in F^{\times}$ if $p (x) = a^{\alpha^{\vee}}$ and $p
  (y) = b^{\alpha^{\vee}}$ then
  \begin{equation}
    \label{corooteven} [x, y] = (a, b)^{2 k} .
  \end{equation}
\end{lemma}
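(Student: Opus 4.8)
The plan is to reduce to the rank-one subgroup attached to $\alpha^{\vee}$ and then invoke the structure theory of central extensions of $\SL_2$ over a local field. First I would introduce the homomorphism $\phi_\alpha\colon \SL_2\to G$ attached to the root $\alpha$ dual to $\alpha^{\vee}$, normalized so that $\phi_\alpha\bigl(\operatorname{diag}(t,t^{-1})\bigr)=t^{\alpha^{\vee}}$; such a $\phi_\alpha$ exists by the Chevalley construction, and if it is not injective this causes no difficulty below. Pulling back the extension (\ref{metatwo}) along $\phi_\alpha$ produces a central extension $1\to\mu_m\to\widetilde E\to \SL_2(F)\to 1$. Since commutators in a central extension do not depend on the choice of lifts, the element $[x,y]$ of the statement equals the commutator $[\tilde h(a),\tilde h(b)]$ computed inside $\widetilde E$, where $\tilde h(a),\tilde h(b)$ are arbitrary lifts of $\operatorname{diag}(a,a^{-1}),\operatorname{diag}(b,b^{-1})$ (one uses that $\mu_m\hookrightarrow\widetilde E\to\widetilde G$ is the inclusion of the center).

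Next I would apply Matsumoto's theorem (in the form recorded e.g.\ in \cite{McNamaraPrincipal}, \cite{BrylinskiDeligne}): the extension $\widetilde E$ is classified by a Steinberg symbol $c\colon F^{\times}\times F^{\times}\to\mu_m$, and one may normalize the lifts $\tilde h(a)$ — built from lifts of the Weyl group element, as in Matsumoto — so that $\tilde h(a)\tilde h(b)=c(a,b)\,\tilde h(ab)$. Because $\operatorname{diag}(a,a^{-1})$ and $\operatorname{diag}(b,b^{-1})$ commute in $\SL_2(F)$, this gives $[x,y]=c(a,b)\,c(b,a)^{-1}$, and the antisymmetry $c(a,b)c(b,a)=1$ of Steinberg symbols then yields $[x,y]=c(a,b)^2$. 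This is the decisive point, reflecting the identity $\langle\alpha,\alpha^{\vee}\rangle=2$: the commutator is intrinsically a square, so it will automatically be an even power of whatever symbol $c$ turns out to be.

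Finally I would pin down $c$. By Moore's theorem on symbols over a nonarchimedean local field (see also \cite{Matsumoto}), $c=(\,\cdot\,,\,\cdot\,)_m^{\,s}$ for some integer $s$, where $(\,\cdot\,,\,\cdot\,)_m$ is the $m$-th order Hilbert symbol (here one may assume $\mu_m\subseteq F^{\times}$, extending the embedding $\mathbf j$, or else work throughout with $\mathbb C^{\times}$-coefficients). To reduce the order from $m$ to $n$, I would use the hypothesis that the cover is essentially of degree $n$: the class of $\sigma^n$ is trivial in $H^2(G(F),\mathbb C^{\times})$, hence so is the class of its restriction along $\phi_\alpha$, which corresponds to the symbol $c^n=(\,\cdot\,,\,\cdot\,)_{m/n}^{\,s}$; since a nontrivial power of the $(m/n)$-th Hilbert symbol is a nontrivial symbol, we must have $(m/n)\mid s$. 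Writing $s=(m/n)k$ and using $(\,\cdot\,,\,\cdot\,)_m^{\,m/n}=(\,\cdot\,,\,\cdot\,)_n$, we get $c=(\,\cdot\,,\,\cdot\,)_n^{\,k}$, and therefore
\[
[x,y]=c(a,b)^2=(a,b)_n^{\,2k},
\]
which is the asserted identity with even integer $2k$ (and is consistent with the preceding Lemma, since the right-hand side lies in $\mu_n$). I expect the main obstacle to be exactly the extraction of the factor $2$ in the middle step: one must verify carefully that the commutator of the canonical torus lifts in $\widetilde E$ is genuinely the \emph{square} of the classifying symbol — equivalently, that Steinberg symbols are antisymmetric, so that $c(a,b)c(b,a)^{-1}=c(a,b)^2$ — since this squaring is precisely what forces the exponent to be even; the remaining Hilbert-symbol bookkeeping between orders $m$ and $n$ is then routine, and it is there, rather than in the previous Lemma, that the essential-degree-$n$ hypothesis is actually used.
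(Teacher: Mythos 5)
Your proposal is correct and follows essentially the same route as the paper: restrict to the rank-one $\SL_2$ attached to $\alpha^\vee$, invoke the Matsumoto/Moore classification of (topological) central extensions of $\SL_2(F)$ by Steinberg symbols, identify the resulting symbol as a power of the Hilbert symbol of appropriate order, use the essential-degree-$n$ hypothesis to force divisibility of that power, and extract the crucial factor of $2$ from the antisymmetry $c(a,b)c(b,a)=1$ via the commutator identity $[x,y]=c(a,b)c(b,a)^{-1}$. The paper carries this out by citing Moore's Theorems~9.2, 3.1 and 10.4 explicitly and phrasing the Hilbert symbol in terms of the full group $\mu_N$ of roots of unity in $F$ (with a somewhat delicate step about passing from $\C^\times$- to $\mu_N$-coefficients), whereas you stay with $\mu_m$-valued symbols throughout; this is a mildly cleaner bookkeeping choice but not a different argument. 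One small caveat: your aside attributing the factor $2$ to $\langle\alpha,\alpha^\vee\rangle=2$ is at best heuristic — the $2$ in the proof comes solely from the antisymmetry of the Steinberg symbol, and the relation to $\langle\alpha,\alpha^\vee\rangle$ is what the lemma is meant to \emph{establish} (evenness of $B$ on the coroot lattice), not an input.
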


Thus $Q(\alpha^\vee)=k$, and this Lemma justifies the assertion
that $Q(\alpha^\vee)=B(\alpha^\vee,\alpha^\vee)/2\in\Z$.

\begin{proof}
  Corresponding to the root $\alpha^{\vee}$ is an embedding $i_{\alpha^{\vee}}
  : \SL_2 \longrightarrow G$. The commutator $[x, y]$ is independent of
  the choice of section, and hence of the choice of cocycle representing the
  cover. Applying Theorem~9.2 of Moore~{\cite{MooreExtensions}} shows that the
  restriction of the cocycle $\sigma$ defining the cover to $\SL_2$ along
  this embedding can be chosen to be the composition of the universal cocycle
  $b$ in that theorem with a homomorphism $\pi_1 (G) \longrightarrow \mu_{m}$.
  
  With this choice of cocycle, we pull the cocycle back to $F^{\times}$ via
  the homomorphism
  \[ y \longmapsto i_{\alpha^{\vee}} \left(\begin{array}{cc}
       y & \\
       & y^{- 1}
     \end{array}\right) . \]
  Then {\cite{MooreExtensions}} Theorem~9.2 shows that this cocycle in $Z^2
  (F^{\times}, \mu_{m})$ is a Steinberg cocycle in the sense of
  {\cite{MooreExtensions}} Definition 3.1. Therefore by Moore's Theorem 3.1
  with $A = \mu_n$, we have $\sigma (a^{\alpha^{\vee}}, b^{\alpha^{\vee}}) = (a,
  b)^{k'}_N$ for some integer $k'$, where $\left( \;, \; \right)_N$ is the
  $N$-th order Hilbert symbol, and $N$ is the number of roots of unity in $F$.
  We have assumed that the class $[\sigma^n]$ is trivial in $H^2 (G,
  \C^{\times})$, and after adjoining enough roots of unity to $F$ we
  may assume that it is trivial in $H^2 (G, \mu_N)$. Pulling this back along
  $i_{\alpha^{\vee}}$ we see that $[\sigma^n]$ is trivial in $H^2 (\SL_2, \mu_N)$.
  By \cite{MooreExtensions} Theorem~10.4 this implies that $k'$ divides $N /
  n$. Thus $\sigma (a^{\alpha^{\vee}}, b^{\alpha^{\vee}}) = (a, b)^k$ for some
  integer~$k$. Now (\ref{xyccy}) implies (\ref{corooteven}).
\end{proof}

The group $K^\circ$ splits in $\tilde{G}$ and we fix a splitting.
Denote by $H := C_{\tilde{T}}(\tilde{T} (\mathfrak{o}))$ the centralizer of
$\tilde{T} (\mathfrak{o}) =\tilde{T} \cap K^{\circ}$ in $\tilde{T}$. Choose a
basis $e_1, \ldots, e_r$ of $\Lambda$, and define constants $b_{i j}$ by
$b_{i j} = B (e_i, e_j)$. Every element
of $T$ may be uniquely written as $\prod t_i^{e_i}$ where $t_i \in
F^{\times}$. It may be deduced from (\ref{commutationbd}) that $H$ is
a maximal abelian subgroup of $\tilde{T}$ and it is characterized as follows.
An element of $\tilde{T}$ whose projection in $T (F)$ is $\prod t_i^{e_i}$
is in $H$ if and only if
$\operatorname{ord}_{\varpi} (\prod_j t_j^{b_{i, j}}) \equiv 0$ mod $n$
for $i = 1, \ldots, r$.
(This may be proved along the lines of Lemma~\ref{lgroupmeaning}
below, or see Lemma~1 of {\cite{McNamaraPrincipal}} for a proof.)

The lattice $\Lambda=X_*(T)$ is canonically isomorphic to $\tilde{T} / \mu_n
\tilde{T} (\mathfrak{o})$; we associate $\mu\in\Lambda$ with the image
$\varpi^\mu$ of a prime element $\varpi$ under the cocharacter map
$\mu:F^\times\to T(F)$. Let
\[ \Lambda^{(n)} = \{\mu \in \Lambda \hspace{0.17em} | \hspace{0.17em} B (y,
   \mu) \equiv 0 \hspace{0.17em} (n) \hspace{0.17em} \text{for all $y \in \Lambda$}
   \} . \]
Lemma~1 of {\cite{McNamaraPrincipal}} implies that the isomorphism of
$\Lambda$ with $\tilde{T} / \mu_n  \tilde{T} (\mathfrak{o})$ induces an
isomorphism of $\Lambda^{(n)}$ with $H / \mu_n  \tilde{T} (\mathfrak{o})$.

If $S$ is a complex torus, let $X^{\ast} (S)$ be its group of rational
characters. Then $S \mapsto X^{\ast} (S)$ is a contravariant functor that is
an equivalence of categories, from the category of complex tori to the
category of lattices. Thus the embedding $\Lambda^{(n)} \longrightarrow
\Lambda$ corresponds to a surjection $\hat{T} (\mathbb{C}) \longrightarrow
\hat{T}' (\mathbb{C})$, where $\hat{T}'$ is another complex torus, with
$\Lambda^{(n)}$ identified with $X^\ast(T')$. We will denote by $\mathbf{z}
\mapsto \mathbf{z}'$ this surjection.

We turn now to the definition of the metaplectic L-group,
from the point of view of~\cite{McNamaraPrincipal}. It is
sufficient to present root data. The root lattice is
$\Lambda^{(n)}$, which contains a root system that we now
describe. If $\alpha^{\vee} \in \Phi^{\vee} \subseteq \Lambda$, let $\alpha' =
n_{\alpha} \alpha^{\vee} \in \Lambda^{(n)}$, where
\[ n_{\alpha} = \frac{n}{\gcd (n, Q (\alpha^\vee))} . \]
The fact that $\alpha' \in \Lambda^{(n)}$ is proved in Theorem~12 of
{\cite{McNamaraPrincipal}}, where it is further shown that $\Phi' = \{ \alpha' |
\alpha^{\vee} \in \Phi^{\vee} \}$ is the root lattice of a
reductive Lie group, which we will denote $\hat{G}' (\mathbb{C})$. This
has the following significance for us: for metaplectic groups, we may apply the
representation schema with $\hat{T}' (\mathbb{C})$ and $\Phi'$ replacing
$\hat{T} (\mathbb{C})$ and $\Phi^{\vee}$, respectively. Thus the torus
$\hat{T}' (\mathbb{C})$ is the maximal torus in a
connected component of a complex reductive group with
root system $\{\alpha'\}$. This is the metaplectic L-group.

We note the formula, for $\mathbf{z}\in\widehat{T}$:
\begin{equation}
  \label{metacoroot}
  (\mathbf{z}')^{\alpha'}=\mathbf{z}^{n_\alpha\alpha^\vee}.
\end{equation}

Coset representatives of $\tilde{T} / H$ can be written as the image under
$\mathbf{s}$ of $\varpi^{\lambda}$, with $\lambda \in \Lambda$ and where
$\mathbf{s}: G \to \tilde{G}$ denotes the standard section. We will abuse notation and
denote the image $\mathbf{s} (\varpi^{\lambda})$ by $\varpi^{\lambda}$.

The construction of the principal series representation of $\tilde{G}$
resembles the construction in the non-metaplectic case. Start with an
unramified character of $H$, namely a genuine character $\chi$ on $H$ that is
trivial on $\tilde{T} \cap K^{\circ}$. Such characters are indexed by
Langlands parameters $\mathbf{z} \in \hat{T} (\mathbb{C})$, the dual torus. We
will write $\chi = \chi_{\mathbf{z}}$ when we wish to emphasize that
dependence. Let $i (\chi):= \Ind^{\tilde{T}}_H (\chi)$. The principal
series representation is obtained by parabolic induction: we inflate $i
(\chi)$ from $\tilde{T}$ to $\tilde{B} = p^{- 1} (B)$ and then induce to
$\tilde{G}$ to obtain $I (\chi) := \Ind^{\tilde{G}}_{\tilde{B}} (i
(\chi))$. Let $\delta$ denote the modular quasicharacter. The module
$I (\chi)$ consists of locally constant functions $f :\tilde{G} \to i (\chi)$ that satisfy
\[ f (bg) = \delta^{\frac{1}{2}} i (\chi) (b) f (g) \]
for all $b \in \tilde{B}, g \in \tilde{G}$, where $\tilde{G}$ acts by right
translation.

\begin{lemma}
  \label{lgroupmeaning}
  Let $\mathbf{z}_1, \mathbf{z}_2 \in \hat{T} (\mathbb{C})$. Then $i
  (\chi_{\mathbf{z}_1}) \cong i (\chi_{\mathbf{z}_2})$ if and only if
  $\mathbf{z}_1' =\mathbf{z}_2'$ in $\hat{T}' (\mathbb{C})$.
\end{lemma}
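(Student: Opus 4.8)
The plan is to induce from the normal abelian subgroup $H$ and use Mackey's theory to turn the isomorphism question into one about $\tilde T/H$-conjugacy of the characters $\chi_{\mathbf z_i}$, and then to observe that among \emph{unramified} characters these conjugacy classes are singletons.

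First I would record the relevant structural facts about $\tilde T$ and $H$. By the preceding lemma every commutator $[x,y]$ with $x,y\in\tilde T$ lies in $\mu_n$, so $[\tilde T,\tilde T]\subseteq\mu_n\subseteq H$ and hence $H$ is normal in $\tilde T$. The index $[\tilde T:H]$ is finite: under $\tilde T/\mu_n\tilde T(\mathfrak o)\cong\Lambda$ the subgroup $H$ becomes $\Lambda^{(n)}$ (Lemma~1 of \cite{McNamaraPrincipal}), and $\mu\mapsto(y\mapsto B(y,\mu)\bmod n)$ embeds $\Lambda/\Lambda^{(n)}$ into the finite group $\operatorname{Hom}(\Lambda,\Z/n\Z)$. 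For $t\in\tilde T$ the conjugate character ${}^t\chi$ of $H$, defined by ${}^t\chi(h)=\chi(t^{-1}ht)$, differs from $\chi$ by a character of $H$ of the form $h\mapsto\mathbf j([t,h])$ — here I use $[t,h]\in\mu_n$ and the fact that any genuine $\chi$ restricts to the fixed embedding $\mathbf j$ on $\mu_n$ — and ${}^t\chi=\chi$ forces $t\in C_{\tilde T}(H)=H$, using that $H$ is maximal abelian.

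Next, since $H\triangleleft\tilde T$, Mackey's formula gives $\operatorname{Res}_H\Ind^{\tilde T}_H\chi=\bigoplus_{t\in\tilde T/H}{}^t\chi$, so by Frobenius reciprocity
\[
\operatorname{Hom}_{\tilde T}\!\bigl(i(\chi_{\mathbf z_1}),i(\chi_{\mathbf z_2})\bigr)\cong\bigoplus_{t\in\tilde T/H}\operatorname{Hom}_H\!\bigl({}^t\chi_{\mathbf z_1},\chi_{\mathbf z_2}\bigr),
\]
which is nonzero precisely when $\chi_{\mathbf z_2}={}^t\chi_{\mathbf z_1}$ for some $t$ (taking $\mathbf z_1=\mathbf z_2$ this also shows each $i(\chi_{\mathbf z})$ is irreducible, since $[\tilde T:H]<\infty$ makes it finite-dimensional). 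Hence $i(\chi_{\mathbf z_1})\cong i(\chi_{\mathbf z_2})$ iff $\chi_{\mathbf z_1}$ and $\chi_{\mathbf z_2}$ are $\tilde T/H$-conjugate. The key point is then that both are \emph{unramified}, i.e.\ trivial on $\tilde T(\mathfrak o)=\tilde T\cap K^\circ$, which is abelian and hence contained in $H=C_{\tilde T}(\tilde T(\mathfrak o))$: if $\chi_{\mathbf z_2}={}^t\chi_{\mathbf z_1}$, then the character $h\mapsto\mathbf j([t,h])$ is trivial on $\tilde T(\mathfrak o)$, so (as $\mathbf j$ is injective) $t$ centralizes $\tilde T(\mathfrak o)$, i.e.\ $t\in C_{\tilde T}(\tilde T(\mathfrak o))=H$, and therefore $\chi_{\mathbf z_2}={}^t\chi_{\mathbf z_1}=\chi_{\mathbf z_1}$. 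Thus $i(\chi_{\mathbf z_1})\cong i(\chi_{\mathbf z_2})$ iff $\chi_{\mathbf z_1}=\chi_{\mathbf z_2}$ as characters of $H$.

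It remains to identify this with the condition $\mathbf z_1'=\mathbf z_2'$. An unramified genuine character of $H$ is determined by its values on coset representatives for $H/\mu_n\tilde T(\mathfrak o)$, which by Lemma~1 of \cite{McNamaraPrincipal} are the $\varpi^\mu$ with $\mu\in\Lambda^{(n)}$, and by the definition of the parametrization $\chi_{\mathbf z}(\varpi^\mu)=\mathbf z^\mu$. So $\chi_{\mathbf z_1}=\chi_{\mathbf z_2}$ iff $\mathbf z_1^\mu=\mathbf z_2^\mu$ for all $\mu\in\Lambda^{(n)}=X^\ast(\hat T')$, which — by the very definition of the surjection $\hat T(\C)\to\hat T'(\C)$ dual to $\Lambda^{(n)}\hookrightarrow\Lambda$ — is precisely the assertion that $\mathbf z_1'=\mathbf z_2'$. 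I expect the only slightly delicate points to be running Mackey and Frobenius correctly for smooth representations of the $\ell$-group $\tilde T$ (harmless here because $i(\chi)$ is finite-dimensional) and keeping the normalization of $\mathbf z\mapsto\chi_{\mathbf z}$ consistent with that of $\mathbf z\mapsto\mathbf z'$; the substantive observation — that unramified characters have trivial $\tilde T/H$-conjugacy orbits — drops out at once from the commutation relation~(\ref{commutationbd}) together with $H=C_{\tilde T}(\tilde T(\mathfrak o))$.
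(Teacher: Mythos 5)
Your proof is correct, and it takes a genuinely different route from the paper's. The paper's argument is short because it leans on Corollary~1 in Section~13.5 of \cite{McNamaraPrincipal}, which says directly that $i(\chi_{\mathbf{z}_1}) \cong i(\chi_{\mathbf{z}_2})$ if and only if $\chi_{\mathbf{z}_1}$ and $\chi_{\mathbf{z}_2}$ agree on the center $Z(\tilde{T}(F))$; the bulk of the paper's proof is then the computation that $\tilde{s}\in Z(\tilde{T}(F))$ iff $\sum_i b_{ij}\sigma_i\equiv 0$ mod $n$ for all $j$, which identifies the relevant condition with $\mathbf{z}'=1$ on $\hat{T}'(\C)$. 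You instead prove the underlying classification result from scratch: since $[\tilde{T},\tilde{T}]\subseteq\mu_n\subseteq H$ and $[\tilde{T}:H]<\infty$, Mackey restriction plus Frobenius reciprocity reduces the question to $\tilde{T}/H$-conjugacy of the characters, and irreducibility of $i(\chi)$ drops out of the maximal abelian property of $H$. Your key observation — that \emph{unramified} genuine characters have trivial conjugation orbits, because $h\mapsto\mathbf{j}([t^{-1},h])$ must vanish on $\tilde{T}(\mathfrak{o})$ and $\mathbf{j}$ is injective, so $t\in C_{\tilde{T}}(\tilde{T}(\mathfrak{o}))=H$ — is doing the same work as the paper's explicit center computation (the condition $\sum b_{ij}\sigma_i\equiv 0$ mod $n$ is precisely centrality), but organized around $H$ rather than $Z(\tilde{T}(F))$. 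The trade-off: your version is self-contained and avoids invoking McNamara's corollary, but it imports the full Mackey/Clifford machinery and requires checking irreducibility; the paper's proof outsources that machinery to \cite{McNamaraPrincipal} and keeps only the lattice arithmetic. Both are sound, and both ultimately rest on the same commutator relation (\ref{commutationbd}) and the identification $H/\mu_n\tilde{T}(\mathfrak{o})\cong\Lambda^{(n)}=X^\ast(\hat{T}')$.
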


\begin{proof}
  We identify $T (F) = (F^{\times})^r$ as follows. Recall that we have chosen
  a basis $e_1, \cdots, e_r$ of $\Lambda = X_{\ast} (T)$. Every $s \in T (F)$ may be
  written uniquely as $\prod s_i^{e_i}$ with $s_i \in F^{\times}$, and we
  associate this with $s = (s_1, \cdots, s_r) \in (F^{\times})^r$. We also
  denote $\operatorname{ord} (s) = (\sigma_1, \cdots, \sigma_r) \in \mathbb{Z}^r$
  where $\sigma_i = \operatorname{ord} (s_i)$.

  Let $\tilde{s}\in\widetilde{T}(F)$, and let $s$ denote its image in $T(F)$.
  Let $\sigma=(\sigma_1,\cdots,\sigma_r)=\sigma(s)$. We will show that
  $\tilde{s}\in Z(\tilde{T}(F))$ if and only if $\sum_i b_{i j} \sigma_i \in
  n\mathbb{Z}$ for every $j$. First, suppose that $\tilde{s}$ is central. 
  Let $a\in F^\times$ and let $y\in \widetilde{T}(F)$ be such that $p(y)=a^{e_j}$.
  We have 
  $[\tilde{s}, y] = 1$ so by the commutation relation (\ref{commutationbd}), we see
  that
  \[ 1 = \prod_i (s_i, a)^{b_{i j}} = \left( \prod_i s_i^{b_{i j}}, a \right)
  \]
  for all $a \in F^\times$, and for all $j$. Using the fact that the kernel of the
  Hilbert symbol is $(F^{\times})^n$, this means that $\prod_i s_i^{b_{i j}}$
  is an $n$-th power, and consequently $\sum_i b_{i j} \sigma_i \equiv 0$
  modulo $n$. Conversely, if $\sum_i b_{i j} \sigma_i \equiv 0$ modulo $n$
  for all $j$, then we may choose $s_i = \varpi^{\sigma_i}$ and the same calculation shows
  that $s = (s_1, \cdots, s_r)$ is central.
  
  By Corollary 1 in Section 13.5 of {\cite{McNamaraPrincipal}}, a necessary
  and sufficient condition for $i (\chi_{\mathbf{z}_1}) \cong i
  (\chi_{\mathbf{z}_2})$ is that $\chi_{\mathbf{z}_1}$ and
  $\chi_{\mathbf{z}_2}$ have the same restriction to $Z (\tilde{T} (F))$. In
  view of this, it is enough to show that $\chi_{\mathbf{z}}$ is trivial on
  $Z (\tilde{T} (F))$ if and only if $\mathbf{z}' = 1$ in $\hat{T}'
  (\mathbb{C})$, since applying this to $\mathbf{z}=\mathbf{z}_1
  \mathbf{z}_2^{- 1}$ will then imply the theorem. In view of our definition
  of $\hat{T}' (\mathbb{C})$, we have $\mathbf{z}' = 1$ if and only if
  $\mathbf{z}^{\mu} = 1$ for all $\mu \in \Lambda^{(n)}$.
  
  We recall that $\Lambda = X^{\ast} (\hat{T})$ is identified with $\Lambda =
  X_{\ast} (T)$. Denote $z_i =\mathbf{z}^{e_i} \in \mathbb{C}^{\times}$. If
  $\mu \in \Lambda$ we may expand $\mu = \sum \sigma_i e_i$, and $\mu \in
  \Lambda^{(n)}$ if and only if $\sum_i b_{i j} \sigma_i \equiv 0$ mod $n$ for
  all $j$. Since $\mathbf{z}^{\mu} = \prod z_i^{\sigma_i}$, the condition
  for $\mathbf{z}' = 1$ is that $\prod z_i^{\sigma_i} = 1$ whenever $\sum_i
  b_{i j} \sigma_i \equiv 0$. On the other hand, if $\sigma = \operatorname{ord} (s)$
  as before, then $\chi_{\mathbf{z}} (s) = \prod z_i^{\sigma_i}$, so this is
  the same as the condition for $\chi_{\mathbf{z}}$ to be trivial on the $Z
  (\tilde{T} (F))$, and the statement is proved.
\end{proof}

\begin{remark}
\label{weylchoice}
Representatives for the Weyl group will
appear in formulas such as (\ref{intertwinerwithcs}) below. We will always choose
the representative of $w\in W=N(T)/T$ to lie in $N(T)\cap K^\circ$, which is lifted
to the metaplectic group via the chosen splitting over $K^\circ$. By abuse of
notation, we will denote this representative also as $w$. This is harmless
since we are inducing an unramfied representation, so
(\ref{intertwinerwithcs}) does not depend on the choice of representative modulo
$T(F)\cap K^\circ=T(\mathfrak{o})$.
\end{remark}

The subgroup $U(F)$ embeds canonically in $\tilde G$ by the trivial section.
Given a positive root $\alpha \in \Phi^+$ we denote by $U_{\alpha}$ the one
parameter unipotent subgroup corresponding to the embedding $i_{\alpha}: SL_2
\to G$. The intertwining operator $\Ai_w=\Ai_w^{\mathbf{z}}: I(\chi_{\mathbf{z}}) \to I(\chi_{w\mathbf{z}})$ can now be defined by 
\begin{equation}
\label{intertwinerwithcs}
\Ai_w^{\mathbf{z}} (f)(g) := \int_{U(F)\cap wU^-(F)w^{-1}} f(w^{-1}ug)\,du 
\end{equation}
when the integral is absolutely convergent, and by analytic continuation
otherwise. The composition $\Ai_{s_i}^{\mathbf{z}}\Ai_{s_i}^{\mathbf{z}}$
is a scalar, indeed
\begin{equation}
\label{metascalar}
\Ai_{s_i}^{s_i\mathbf{z}}\Ai_{s_i}^\mathbf{z}=
\frac{(1-v\mathbf{z}^{n_{\alpha_i}\alpha_i^\vee})(1-v\mathbf{z}^{-n_{\alpha_i}\alpha_i^\vee})}
     {(1-\mathbf{z}^{n_{\alpha_i}\alpha_i^\vee})(1-\mathbf{z}^{-n_{\alpha_i}\alpha_i^\vee})}=
     \frac{(1-v(\mathbf{z'})^{\alpha_i'})(1-v(\mathbf{z'})^{-\alpha_i'})}
          {(1-(\mathbf{z'})^{\alpha_i'})(1-(\mathbf{z'})^{-\alpha_i'})}\;.
\end{equation}
This follows from Proposition~4.4 in~\cite{McNamaraCS}.

We remind the reader of the point we made in the introduction, that it
is convenient to think of the Hecke algebra of a reductive group $G$
as something associated with the Langlands dual group $\widehat{G}(\C)$.

\begin{example}
\label{metalmtheory}
We may now give a basic example of the representation schema, making
use of the metaplectic dual-group. In the Assumptions of Section~\ref{schemasect},
we will take the torus to be not $\widehat{T}(\C)$ but $\widehat{T}'(\C)$,
and the root system there will be $\Phi'$. Let $\mathbf{z}'\in
\widehat{T}'(\mathbf{z})$. Define $\mathcal{M}(\mathbf{z}')$ to
be the principal series representation $I(\chi_{\mathbf{z}})$,
where $\mathbf{z}\in\widehat{T}(\C)$ maps to $\mathbf{z}'$ in
$\widehat{T}'(\C)$. This representation is well-defined by
Lemma~\ref{lgroupmeaning}. The assumptions are now satisfied
and so we obtain a representation of the affine Hecke algebra associated
with the metaplectic dual-group on
\[\mathfrak{M}(\mathbf{z}')=\bigoplus_{w\in W}\mathcal{M}(w \mathbf{z}').\]
In particular, the scalar (\ref{doublesch}) is to be interpreted
as
\[
  \frac{(1 - v(\mathbf{z'})^{\alpha_i'})(1 - v(\mathbf{z'})^{-\alpha_i'})}
  {(1 -(\mathbf{z'})^{\alpha_i'})(1 - (\mathbf{z'})^{-\alpha_i'})}
\]
and for this we may use (\ref{metascalar}) and (\ref{metacoroot}).
\end{example}

As before, instead of taking all of $I(\chi_{\mathbf{z}})$,
we may define $\mathcal{M}(\mathbf{z}')$ to be the module of
Whittaker coinvariants, or Iwahori fixed vectors, etc. We
will consider the Whittaker coinvariants in the next section.

\section{Metaplectic Whittaker functionals and $R$-matrices\label{whitandr}}

We will now take Whittaker coinvariants in the representations
of Example~\ref{metalmtheory}. Then
results of \cite{KazhdanPatterson,ChintaOffen,McNamaraCS}
can be reinterpreted as showing that the scattering matrix
of the intertwining operators on the Whittaker coinvariants
agrees with the R-matrices coming from quantum groups.

Results of \cite{KazhdanPatterson,ChintaOffen,McNamaraCS} allow us to make
the Whittaker representations more explicit. We review this, focussing
soon on the $\GL(r)$ case in this section. We will see that the same
representations appear also from $R$-matrices of quantum groups.

Just as in the linear group case,
given a generic character $\psi$ of $U(F)$, viewed as a subgroup of $\tilde{G}$, a Whittaker functional on a representation $V$ of $\tilde G$ is a functional $W$ such that 
\[ W(u \cdot \phi) = \psi(u)W(\phi) \quad\text{for all $u \in U(F), \phi \in V$}. \]
The vector space of Whittaker functionals on $I(\chi_{\mathbf{z}})$ has dimension equal to the cardinality of $\tilde T/H$, and a natural basis ${W}^{\chi}_b$, indexed by coset representatives $b$ for $\tilde T/H$. If $W^{\chi}$ denotes the $i(\chi)$-valued Whittaker functional on functions $\phi \in I(\chi)$ given by
\begin{equation} W^{\chi}(\phi) = \int_{U^-} \phi(u) \psi(u) \, du, \label{whitatident} \end{equation}
then a basis for the space of $\mathbb{C}$-valued Whittaker models arises by composing with a choice of basis for the space of functionals on $i(\chi)$. Because we wish to discuss the image of $K^\circ$-fixed vectors under the Whittaker model, it is convenient to choose a basis using the unique-up-to-scalar $K^\circ$-fixed (i.e. ``spherical'') vector $\phi^\circ$ in the representation $I(\chi)$. Indeed the map $\phi^\circ \mapsto \phi^\circ(1) =: v_0$ induces an isomorphism $I(\chi)^{K^\circ} \simeq i(\chi)^{K^\circ \cap \tilde{T}}$, and acting by the induced representation $\pi_\chi$ on $i(\chi)$, the set $\{ \pi_\chi(a) v_0 \}$ with $a$ running over a set of representatives for $\tilde{T} / H$ is a basis for $i(\chi)$. Thus, we have a dual basis $\{ \mathscr{L}_b \}$ for $i(\chi)^\ast$ indexed by elements $b \in \tilde{T} / H$ such that $\mathscr{L}_b( \pi_\chi(a) v_0 ) = \delta_{a,b}$. We select coset representatives $b \in \tilde{T} / H$ of the form $\varpi^\mu$ with $\mu \in \Lambda$ and the Whittaker functionals $W_b^\chi$ associated to $\chi := \chi_\mathbf{z}$ are then defined by
\begin{equation} 
  \label{normalizedfun}
  W_b^\chi := \mathbf{z}^{\mu} \cdot \mathscr{L}_b \circ W^\chi, \quad \text{if $b = \varpi^\mu$ with $\mu \in
    \Lambda$,} 
\end{equation}
where $W^{\chi}$ is the $i(\chi)$-valued Whittaker functional in (\ref{whitatident}). Taking the spherical vector $\phi^\circ \in I(\chi_{\mathbf{z}})$, the span of the functions $W_b^\chi (\phi^\circ)$ with $b \in \tilde{T} / H$ will be denoted $\mathcal{W}_{\mathbf{z}}$, and referred to as the space of spherical Whittaker functions for $I(\chi_{\mathbf{z}})$.

Let 
\[\bar{\Ai_w} := c_w^{(n)}(\chi)^{-1} \Ai_w \]
denote the normalized intertwiner, where $c_{w}^{(n)}(\chi)$ is defined on simple reflections $s=s_{\alpha}$ as 
\begin{equation} c_{s}^{(n)}(\chi_\mathbf{z}) = c_{s}^{(n)}(\mathbf{z}) =
  \frac{1-q^{-1}\mathbf{z}^{n_\alpha \alpha^\vee}}{1-\mathbf{z}^{n_\alpha
      \alpha^\vee}}=
  \frac{1-q^{-1}(\mathbf{z}')^{\alpha'}}{1-(\mathbf{z}')^{\alpha'}}.
\label{nalphafirstocc} \end{equation}
Then $c_w^{(n)}(\chi)$ is determined by the fact that, for all $w\in W$ such that $\ell(sw)=\ell(w)+1$, we have $c_{sw}^{(n)}(\chi) = c_s^{(n)}(\chi^w)c_w^{(n)}(\chi)$.
The advantage of the normalized intertwining integral is that they are
perfectly multiplicative, that is,
$\bar{\Ai}_w\bar{\Ai}_{w'}=\bar{\Ai}_{ww'}$,
whereas for the unnormalized intertwining integral, this is only true under
the assumption that $\ell(ww')=\ell(w)+\ell(w')$. The disadvantage of the
normalized intertwining integrals is that they have poles where
$c_w^{(n)}(\chi)$ vanishes. We will use them since we need results of
\cite{McNamaraCS,BrubakerBuciumasBump} that are expressed in terms of them.

Given a Whittaker functional $W$ on $I(^w \chi)$, the composition $W \circ
\bar{\Ai_{w}}$ is a Whittaker functional on $I(\chi)$. 
Thus
\begin{equation}\label{eq:KP}
W_a^{\chi^w} \circ \bar{\Ai_w} = \sum_{b \in \tilde T/H} \tau_{a,b} W^{\chi}_b
\end{equation}
with coefficients $\tau_{a,b} := \tau_{a,b}^{(w)}(\mathbf{z}) \in \mathcal{O} ({\Treg}(\C))$. 
To understand (\ref{eq:KP}), it is enough to determine the values of $\tau_{a,b}$ for simple reflections. According to Lemma I.3.3 in Kazhdan and Patterson \cite{KazhdanPatterson} or Theorem 13.1 in McNamara \cite{McNamaraCS}, in the case $w=s_{\alpha}$ (or just $s$ for short), the computation of the coefficients amounts to a rank one calculation which can be done uniformly for covers of split groups. Writing $\tau_{a,b}$ as
$\tau_{\nu, \mu}$ for $a = \pi^\nu$ and $b = \pi^\mu$, then we break $\tau_{\nu, \mu}$ into two pieces
 $$ \tau_{\nu,\mu} = \tau^1_{\nu,\mu} + \tau^2_{\nu,\mu}. $$
Here $\tau^1$ vanishes unless $\nu \sim \mu$ mod $\Lambda^{(n)}$ and $\tau^2$ vanishes unless $\nu \sim s(\mu) + \alpha^\vee$ mod $\Lambda^{(n)}$. Moreover:
\begin{equation} \tau^1_{\mu, \mu} = (1 - q^{-1}) \frac{\mathbf{z}^{\left(n_\alpha \lceil \frac{B(\alpha^\vee, \mu)}{n_\alpha Q(\alpha^\vee)} \rceil - \frac{B(\alpha^\vee, \mu)}{Q(\alpha^\vee)}\right) \alpha^\vee}}{1-q^{-1} \mathbf{z}^{n_\alpha \alpha^\vee}} \label{tauone} \end{equation}
where $\lceil x \rceil$ denotes the smallest integer at least $x$ and $n_\alpha$ is as in (\ref{nalphafirstocc}), and
\begin{equation} \tau^2_{s(\mu)+\alpha^\vee, \mu} = q^{-1} g(B(\alpha^\vee, \mu) - Q(\alpha^\vee)) \mathbf{z}^{-\alpha^\vee}\frac{1-\mathbf{z}^{n_\alpha \alpha^\vee}}{1-q^{-1} \mathbf{z}^{n_\alpha \alpha^\vee}}, \label{tautwo} \end{equation}
where the $n$-th order Gauss sum $g$ is normalized as in Section 13 of~\cite{McNamaraCS}.
These explicit formulas will be used in the next section. Note the form above for the two $\tau$ contributions is different than that of \cite{McNamaraCS} owing to our normalization of the Whittaker functionals in~(\ref{normalizedfun}).

In the remainder of this section, we restrict ourselves to the case of
$G = \textrm{GL}_r$ and the extension described in Example~\ref{glnmetaex}.
In this case, we may take $e_i$ to be the standard
basis of $X_*(T)\cong\mathbb{Z}^r$, where $T$ is the diagonal torus, and
\[B(e_i,e_j)=\left\{\begin{array}{ll}1&\text{if $i=j$,}\\
0&\text{if $i\neq j$.}\end{array}
\right.\]
The lattice $\Lambda^{(n)}=n \Lambda$. For the
definition of the functionals $W_b$, see Sections~7 and 8 of
\cite{BrubakerBuciumasBump}.  A basis consists of ${W}_b$ where $b = \varpi
^{\nu}$ with $\nu \in \Delta$ of the form:
\[ \nu - \rho = \sum_i c_i e_i \]
where $0\leqslant c_i \leqslant n-1$ and $e_i$ is the weight $(0, \cdots 1, \cdots, 0)$ under the isomorphism $ X_*(T) \simeq \mathbb{Z}^r$.

Let $U^{F}_{\sqrt{v}}(\hat{\mathfrak{gl}}_n)$ be the quantum group introduced in Theorem \ref{twistedhopfalgebra} with deformation parameter $\sqrt{v}$ and $\gamma_{ij} = -\frac{g(i-j)}{\sqrt{v}}$ where $g(i)$ is an $n$-th order Gauss sum as above. 
Let $V_{z^n_k}$, $1 \leqslant k \leqslant r$ be the fundamental evaluation module of $U^{F}_q(\hat{\mathfrak{gl}}_n)$ associated to $z^n_k \in \mathbb{C}^{\times}$ with basis $\{ v_i(z^n_k) \}$. Denote by $\tilde R^{\mathbf{\gamma}}(z^n_i z^{-n}_j) : V_{z^n_i} \otimes V_{z^n_j} \to V_{z^n_i} \otimes V_{z^n_j}$ the map
\[ \tilde R^{\mathbf{\alpha}}(x) = \sum_i \frac{-v+x}{1-vx} \, e_{i i} \otimes e_{i i} + \sum_{i \neq j} g(i-j)\frac{1-x}{1-vx} e_{i i} \otimes
   e_{j j} \]
\begin{equation} + \frac{1-v}{1-vx} \sum_{i > j} e_{i j} \otimes e_{j i} + x\frac{1-v}{1-vx} \sum_{i < j} e_{i j} \otimes e_{j i}, \,\,\, x = z^n_i z_j^{-n}. \label{tildeRdefined} \end{equation}
This is just the $R$-matrix associated to the quantum group  $U^{F}_q(\hat{\mathfrak{gl}}_n)$ in Section \ref{Drinfeldtwist} with a different normalization. It will satisfy the parametrized Yang-Baxter equation and a $triangularity$ property: 
\begin{equation} \label{eq:triangularity}
\tau \tilde R^{\mathbf{\gamma}}(x) \tau \tilde R^{\mathbf{\gamma}}(x^{-1}) = \left( \frac{-v+x}{1-vx} \right) \left( \frac{-v+x^{-1}}{1-vx^{-1}} \right) I = I.
\end{equation}
Note that for the last equality to make sense, we need $x \neq v^{\pm}$, which follows from the extra restriction we added to $\hat{T}_{\textrm{reg}}(\mathbb{C})$. 

There is an isomorphism of vector spaces between $\theta_{\mathbf{z}} : \mathcal{W}_{\mathbf{z}} \to V_{z^n_1} \otimes \cdots \otimes V_{z^n_r}$ that takes $W^{\chi}_{b}$, with $b = \rho + \sum_{j} i_j e_j$, to $v_{i_1}(z^n_1) \otimes \cdots \otimes v_{i_r}(z^n_r)$ (see Theorem 1 in \cite{BrubakerBuciumasBump}). This isomorphism allows us to 
relate intertwining integrals between spaces of Whittaker functionals on the metaplectic $n$-cover of $p$-adic GL$(r)$ and braidings between evaluation modules of affine quantum groups:

\begin{theorem}[Theorem 1 in \cite{BrubakerBuciumasBump}]\label{thm:BBB}
The following diagram commutes:
\[\begin{CD}\mathcal{W}_{\mathbf{z}}@>\theta_{\mathbf{z}}>>V_{z^n_1} \otimes \cdots \otimes V_{z^n_i}\otimes V_{z^n_{i+1}}\otimes\cdots\otimes V_{z^n_r}\\
@VV\bar{\Ai}_{s_i}^{\ast}V @VV {I_{V_{z^n_1}}\otimes\cdots\otimes
\tau \tilde R^{\gamma}(z_i^n z^{-n}_{i+1})\otimes\cdots\otimes I_{V_{z^n_r}}} V\\
\mathcal{W}_{s_i\mathbf{z}}@>\theta_{s_i\mathbf{z}}>>V_{z^n_1} \otimes \cdots \otimes V_{z^n_{i+1}}\otimes V_{z^n_i}\otimes\cdots\otimes V_{z^n_r}
\end{CD} \]
where $\bar{\Ai}^\ast_{s_i}$ is the map given by $ \bar{\Ai}^\ast_{s_i} (W_b^{\chi}) = W_b^{\chi} \circ  \bar{\Ai}_{s_i}$. 
\end{theorem}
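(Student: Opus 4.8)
The plan is to reduce the commutativity of the square to a single‑reflection, rank‑one matching of matrix entries. Since both vertical arrows are given explicitly on basis vectors, it suffices to check commutativity on the basis $\{W^\chi_b\}$ of $\mathcal{W}_{\mathbf z}$, where $b=\varpi^\mu$ runs over coset representatives for $\tilde T/H$. By the defining relation \eqref{eq:KP}, the operator $\bar{\Ai}^\ast_{s_i}$ has matrix $(\tau^{(s_i)}_{a,b}(\mathbf z))$ in the bases $\{W^\chi_b\}$ and $\{W^{\chi^{s_i}}_a\}$; transporting through the isomorphisms $\theta_{\mathbf z}$ and $\theta_{s_i\mathbf z}$, which identify these bases with the standard tensor bases $v_{i_1}(z^n_1)\otimes\cdots\otimes v_{i_r}(z^n_r)$ (with the $i$‑th and $(i+1)$‑st factors interchanged in the target), the assertion becomes the statement that the matrix $(\tau^{(s_i)}_{a,b})$ equals that of $I\otimes\cdots\otimes\tau\tilde R^{\gamma}(z^n_iz^{-n}_{i+1})\otimes\cdots\otimes I$ in these tensor bases.

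Next I would invoke the rank‑one reduction for the intertwiner coefficients at a simple reflection, due to Kazhdan--Patterson (Lemma~I.3.3 of \cite{KazhdanPatterson}) and McNamara (Theorem~13.1 of \cite{McNamaraCS}): for $w=s_i$ the coefficient $\tau^{(s_i)}_{a,b}$ vanishes unless the cosets $a,b\in\tilde T/H\cong(\Z/n\Z)^r$ agree in every coordinate except the $i$‑th and $(i+1)$‑st, and in those two coordinates it is given by the rank‑one formulas \eqref{tauone} and \eqref{tautwo}. Since $\tau\tilde R^\gamma$ acts only in the $i,i+1$ tensor slots, the whole identity reduces to the two‑factor case. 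Specialising to $G=\GL_2$ with $B$ the dot product gives $Q(\alpha^\vee)=1$, $n_\alpha=n$ and $B(\alpha^\vee,\mu)=c-c'+1$ for $\mu=\rho+ce_i+c'e_{i+1}$, and one must show that, on the $n^2$ vectors $v_{c}(z^n_i)\otimes v_{c'}(z^n_{i+1})$ with $0\le c,c'\le n-1$, the coefficients $\tau^{(s_i)}$ coincide with the entries of $\tau\tilde R^\gamma(z^n_iz^{-n}_{i+1})$ read off from \eqref{tildeRdefined}.

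Then comes the actual comparison of entries. The "diagonal'' part $\tau^1$ of \eqref{tauone}, which is supported on pairs with $\nu\sim\mu$, must be matched with the terms of \eqref{tildeRdefined} that preserve the unordered index pair — the $e_{ii}\otimes e_{ii}$ and $e_{ii}\otimes e_{jj}$ terms — while the "off‑diagonal'' part $\tau^2$ of \eqref{tautwo}, supported on pairs with $\nu\sim s(\mu)+\alpha^\vee$, must be matched with the $e_{ij}\otimes e_{ji}$ terms; the dichotomy in \eqref{tildeRdefined} between the $x$‑free and $x$‑scaled versions of these terms corresponds to whether $c<c'$ or $c>c'$, which is also what governs the ceiling function $\lceil B(\alpha^\vee,\mu)/(n_\alpha Q(\alpha^\vee))\rceil$ in \eqref{tauone}. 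The substitutions $v=q^{-1}$, $x=\mathbf z^{n\alpha^\vee}=z^n_iz^{-n}_{i+1}$ and $\gamma_{ij}=-g(i-j)/\sqrt v$, together with the Gauss‑sum normalisation of \cite{McNamaraCS}, the $\mathbf z^\mu$‑twist in the definition \eqref{normalizedfun} of the functionals, the factor $c^{(n)}_s(\chi)$ defining $\bar{\Ai}$, and the renormalisation relating $R^\gamma$ to $\tilde R^\gamma$, must all be unwound so that the two matrices agree on the nose; the triangularity relation \eqref{eq:triangularity} is then the reflection of $\bar{\Ai}_{s_i}\bar{\Ai}_{s_i}=\bar{\Ai}_1=\mathrm{id}$ and serves as a consistency check.

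The main obstacle is precisely this last bookkeeping. None of the structural steps is difficult, but forcing the piecewise ($i<j$ versus $i>j$, with or without a factor of $x$) shape of $\tilde R^\gamma$ in \eqref{tildeRdefined} to line up exactly with the ceiling‑function shape of \eqref{tauone} and \eqref{tautwo} requires careful tracking of every normalisation convention above, and in particular of how the normalised functionals of \eqref{normalizedfun} redistribute the powers of $\mathbf z$. This is exactly the computation carried out in the proof of Theorem~1 of \cite{BrubakerBuciumasBump}, which we take as given.
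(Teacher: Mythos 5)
The paper itself gives no proof of this statement; it is quoted directly as Theorem~1 of \cite{BrubakerBuciumasBump}. Your proposal correctly sketches the structure of that external proof (reduce to the rank-one scattering coefficients $\tau^1$, $\tau^2$ of \eqref{tauone}--\eqref{tautwo} and match them against the entries of $\tau\tilde R^\gamma$ in \eqref{tildeRdefined}) but, as you acknowledge in your final sentence, you ultimately defer the decisive entry-by-entry computation to \cite{BrubakerBuciumasBump} — which is exactly what the paper does, so the two treatments coincide.
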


Since a Whittaker functional is any functional that factors through the module
of coinvariants, $\mathcal{W}_{\mathbf{z}}$ is the dual space to the module
of coinvariants mentioned in Example~\ref{metalmtheory}.

We may now describe an instance of the representation schema related to these
evaluation modules. We will revisit the substance of Proposition~\ref{heckeybeinstanceDrinfeld}
making use of the metaplectic L-group framework. If
$\mathbf{z}= (z_1, \cdots, z_r) \in \hat{T} (\mathbb{C})$ then since
$\Lambda^{(n)}$ contains $n \Lambda$, the quantities $z_i^n$ depend only on
$\mathbf{z}' \in \hat{T}' (\mathbb{C})$, and $z_i^n / z^n_{i + 1} =
(\mathbf{z}')^{\alpha_i'}$. Let
\[ \mathcal{M} (\mathbf{z}') = V_{z_1^n} \otimes \cdots \otimes V_{z_r^n} .
\]
Define the map $A_{s_i}^{\mathbf{z}'} : \mathcal{M} (\mathbf{z}')
\longrightarrow \mathcal{M} (s_i \mathbf{z}')$ by
\begin{equation}\label{intertwiningRmatrix}
A_{s_i}^{\mathbf{z}'}  = \frac{1-vz_i^n z_{i+1}^{-n}}{1-z_i^n z_{i+1}^{-n}}  I_{V_{z^n_1}} \otimes \cdots \otimes \tau \tilde R^{\gamma}(z_i^n z_{i+1}^{-n}) \otimes \cdots \otimes I_{V_{z^n_r}}.
\end{equation}

\begin{proposition}
\label{typeametaplecticschema}
  The above setup satisfies the assumptions in Section~\ref{schemasect}.  Thus
 $\mathfrak{M}(\mathbf{z}')$ is a representation of the affine Hecke
  algebra associated with the metaplectic dual-group. As in Proposition~\ref{heckeybeinstanceDrinfeld},
  there is a commuting action of $U_{\sqrt{v}}^F(\widehat{\mathfrak{gl}}(n))$.
\end{proposition}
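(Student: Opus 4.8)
The plan is to follow, almost verbatim, the proofs of Theorem~\ref{heckeybeinstance} and Proposition~\ref{heckeybeinstanceDrinfeld}; the only genuinely new ingredient is that the dual torus and root system $\widehat{T}(\C),\Phi^\vee$ are replaced throughout by the metaplectic data $\widehat{T}'(\C),\Phi'$ of Example~\ref{metalmtheory}, and the relevant $R$-matrix is the Gauss-sum--twisted one $\tilde R^\gamma$ of~(\ref{tildeRdefined}) with $\gamma_{ij}=-g(i-j)/\sqrt v$. First I would record the two elementary facts that make the bookkeeping go through in the $\GL_r$ case: since $B$ is the standard dot product on $\mathbb{Z}^r$ one has $Q(\alpha_i^\vee)=1$, hence $n_{\alpha_i}=n$ and $\Lambda^{(n)}=n\Lambda$; consequently $\Phi'$ is again a root system of type $A_{r-1}$ with Weyl group $S_r$, and by~(\ref{metacoroot}),
\[ (\mathbf{z}')^{\alpha_i'}=\mathbf{z}^{n\alpha_i^\vee}=z_i^n z_{i+1}^{-n}. \]
In particular only the order-two and order-three braid relations~(\ref{abraidb}) and~(\ref{abraida}) must be verified.

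Next I would check the scalar hypothesis in Assumptions~\ref{assofuandme}. Writing $x=(\mathbf{z}')^{\alpha_i'}=z_i^n z_{i+1}^{-n}$, the reflection $s_i$ interchanges $z_i^n$ and $z_{i+1}^n$ and hence sends $x$ to $x^{-1}$, so that
\[ A_{s_i}^{s_i\mathbf{z}'}\circ A_{s_i}^{\mathbf{z}'}=\frac{(1-vx^{-1})(1-vx)}{(1-x^{-1})(1-x)}\,\bigl(\tau\tilde R^\gamma(x^{-1})\circ\tau\tilde R^\gamma(x)\bigr)_{i,i+1}, \]
and by the triangularity identity~(\ref{eq:triangularity}) the operator in parentheses is the identity. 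Thus the composition acts by the scalar
\[ \frac{(1-v(\mathbf{z}')^{\alpha_i'})(1-v(\mathbf{z}')^{-\alpha_i'})}{(1-(\mathbf{z}')^{\alpha_i'})(1-(\mathbf{z}')^{-\alpha_i'})}, \]
which is exactly~(\ref{doublesch}) in the metaplectic interpretation used in Example~\ref{metalmtheory}. Here one sees why the normalization of $\tilde R^\gamma$ in~(\ref{tildeRdefined}) together with the prefactor $\tfrac{1-vx}{1-x}$ in~(\ref{intertwiningRmatrix}) was the right choice; by Remark~\ref{simpletwist} any further scalar $\xi(x)$ with $\xi(x)\xi(x^{-1})=1$ would change nothing. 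One should also note that everything takes place over the restricted regular locus where $x\neq 1,v^{\pm 1}$, so that~(\ref{eq:triangularity}) and the prefactors make sense.

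For the braid relations, the order-two relation~(\ref{abraidb}) is immediate, because for non-adjacent $i,j$ the operators $A_{s_i}^{\mathbf{z}'}$ and $A_{s_j}^{\mathbf{z}'}$ act on disjoint pairs of tensor slots and the scalar prefactors are ordinary functions that commute. For the order-three relation~(\ref{abraida}) I would reduce, exactly as in the proof of Theorem~\ref{heckeybeinstance}, to a rank-two ($A_2$) computation: the parametrized Yang--Baxter equation satisfied by $\tilde R^\gamma$ (stated just after~(\ref{tildeRdefined})) gives the operator identity on the tensor factors, while the scalar prefactors $\tfrac{1-v\,(w\mathbf{z}')^{\alpha_i'}}{1-(w\mathbf{z}')^{\alpha_i'}}$ appearing along each side of~(\ref{abraida}) multiply out to the \emph{same} rational function of $\mathbf{z}'$, because the arguments on the two sides run through the same multiset of positive roots of $\Phi'$ (this is the familiar "enumeration of the positive roots" phenomenon, and the arguments $x,xy,y$ of the three $R$-matrix factors literally agree on both sides of the parametrized Yang--Baxter equation). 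With Assumptions~\ref{assofuandme} verified, Theorems~\ref{hecketheorem} and~\ref{hecketheoremaffine} then equip $\mathfrak{M}(\mathbf{z}')=\bigoplus_{w}\mathcal{M}(w\mathbf{z}')$ with the asserted module structure for the affine Hecke algebra of the metaplectic dual group.

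Finally, the commuting action of $U_{\sqrt v}^F(\widehat{\mathfrak{gl}}(n))$ follows precisely as in Proposition~\ref{heckeybeinstanceDrinfeld}: each $\tau\tilde R^\gamma(z_i^n z_{i+1}^{-n})$ is, up to the scalar in~(\ref{intertwiningRmatrix}), a $U_{\sqrt v}^F(\widehat{\mathfrak{gl}}(n))$-module homomorphism $V_{z_i^n}\otimes V_{z_{i+1}^n}\to V_{z_{i+1}^n}\otimes V_{z_i^n}$, so all the operators $A_{s_i}^{\mathbf{z}'}$ — and hence the whole Hecke action built out of them — commute with the diagonal $U_{\sqrt v}^F(\widehat{\mathfrak{gl}}(n))$-action on $\mathfrak{M}(\mathbf{z}')$. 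I do not expect any deep obstacle: the one place requiring care is the normalization/telescoping bookkeeping in the previous paragraph, i.e.\ matching the prefactor against the triangularity constant and against the braid relation, and this is exactly parallel to — indeed a special case of — the computations already carried out for Theorem~\ref{heckeybeinstance} and Proposition~\ref{heckeybeinstanceDrinfeld}.
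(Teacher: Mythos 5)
Your argument is correct and follows essentially the same route as the paper's own (short) proof: verify the triangularity condition (\ref{eq:triangularity}) gives the required scalar (\ref{doublesch}) after the substitution $\mathbf{z}^{\alpha_i^\vee}\mapsto(\mathbf{z}')^{\alpha_i'}=z_i^nz_{i+1}^{-n}$, observe the order-two braid is trivial because the operators hit disjoint tensor slots, reduce the order-three braid to the parametrized Yang--Baxter equation for $\tilde R^\gamma$, and then invoke Theorems~\ref{hecketheorem} and~\ref{hecketheoremaffine}; the commuting $U_{\sqrt v}^F(\widehat{\mathfrak{gl}}(n))$-action follows since each $\tau\tilde R^\gamma$ is a module intertwiner exactly as in Proposition~\ref{heckeybeinstanceDrinfeld}. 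You make explicit the matching of scalar prefactors on both sides of (\ref{abraida}) via the usual enumeration of positive roots, a point the paper leaves implicit; this is a harmless but useful addition.
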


\begin{proof}
The first assumption is satisfied according to equation
(\ref{eq:triangularity}) with $x = z_i^n z_{i+1}^{-n}$. Note that the term
used to define $A_{s_i}^{\mathbf{z}}$ in equation \eqref{intertwiningRmatrix}
is needed for this step. The second equation follows from the fact that
$A_{s_i}^{\mathbf{z}}$ and $A_{s_j}^{\mathbf{z}}$ act on different components
of the tensor product when $|i-j|>1$. The third equation is equivalent to the
parametrized Yang-Baxter equation for $\tilde R^{\gamma}(x)$
(\cite{BrubakerBuciumasBump} Theorem~4. Since the proof of Theorems~\ref{hecketheorem}
and~\ref{hecketheoremaffine} hold for any $\mathbf{z}^{\alpha_i^\vee} = z_i
z_{i+1}^{-1}$ with each $z_i$ an arbitrary complex parameter in
$\hat{T}_{\textrm{reg}}(\mathbb{C})$, we may substitute $z_i \mapsto z_i^n$
for all $i$ and the proof goes through without change.
\end{proof}

While these last results are restricted to Cartan type $A$, we have noted that
calculations with $\bar{\Ai}_{s_i}^{\ast}$ for split groups like
(\ref{tauone}) and~(\ref{tautwo}) require only information from rank one
embeddings. Thus, in the next section, we'll be able to use the prior two
results to connect $R$-matrices to the calculation of values of the spherical
Whittaker function on metaplectic covers of split, reductive groups.

\begin{remark}
The representation in Proposition~\ref{typeametaplecticschema} (viewed as a
representation on the space of Whittaker functionals) is a variant of the
representation in Theorem~\ref{heckeybeinstanceDrinfeld} with $z_i$ replaced
by $z_i^n$ for a given $\gamma$ according to Theorem~\ref{thm:BBB}. 
We may also obtain a representation of the same affine Hecke algebra
as follows. Note that, setting $v=q^{-1}$, the representation in Example~\ref{metalmtheory}
also has a commuting action of the given metaplectic cover of $\GL(r,F)$, since it is
built from the principal series representations. So we may take
the Whittaker functionals and still have a representation of the
Hecke algebra associated with the metaplectic L-group. Now the 
operators $\theta_{\mathbf{z}}$ may be combined to produce a
single operator that is an isomorphism between this representation
and the one in Proposition~\ref{typeametaplecticschema}.
This is the connection between Hecke modules coming from intertwining
operators of representations of $p$-adic groups and those coming from
$R$-matrices of quantum groups that was mentioned in the introduction.
\end{remark}

\section{Computing special vectors in models}\label{modelsohboy}

We continue with the case of $M(\mathbf{z}) := I(\chi_{\mathbf{z}})$, a principal series on an $n$-fold metaplectic cover $\tilde{G}$ of a split, reductive
group $G$ defined over a nonarchimedean local field $F$ containing $\mu_{2n}$, the $2n$-th roots of unity. In this section, we explain
how the foundations we discussed in Section~\ref{schemasect} allow one to evaluate the image of Iwahori-fixed vectors in $M(\mathbf{z})$
under certain models for which the space of embeddings is finite dimensional. This generalizes the setting of Section~\ref{whitheck}, where this was
carried out for Whittaker and spherical functionals on (non-metaplectic) unramified principal series. 
The present section will apply, in particular, to Iwahori-fixed vectors under
metaplectic Whittaker functionals, which are treated in an extended example at
the end of the section and connected to work of
\cite{ChintaGunnellsPuskas,PuskasWhittaker,PatnaikPuskas}. As noted in the prior section, the dimension
of the space of Whittaker models for $M(\mathbf{z})$ is finite and we may choose a spanning set for this space, indexed by coset representatives for
$\Lambda / \Lambda^{(n)}$, where $\Lambda$ denotes the cocharacter lattice and $\Lambda^{(n)}$ is as in Section~\ref{whitandr}.

Given a simple reflection $s$ in $W$, the Weyl group of $G$, the calculation of Iwahori fixed vectors in any finite dimensional model hinges
on the following identity. Let $\mathcal{F}_i^{\mathbf{z}}$, $i = 1, \ldots, k$, span the finite dimensional space of embeddings of $M(\mathbf{z})$ into the model. Then with
$\Ai_{s}^{\mathbf{z}}$ the unnormalized intertwining operator from $M(\mathbf{z}) \longrightarrow M(s \mathbf{z})$ as in (\ref{intertwinerwithcs}), we must compute the ``scattering matrix'' $(b_{i,j}(\mathbf{z}))$ defined
by
\begin{equation} \mathcal{F}_i^{s \mathbf{z}} \circ \Ai_{s}^{\mathbf{z}} =
  \sum_{j=1}^k b_{i,j}^s(\mathbf{z})
  \mathcal{F}_j^{\mathbf{z}}. \label{scatmat}
\end{equation}
As $s$ is a simple reflection, this may be reduced to a rank one calculation. In Section~\ref{whitandr}, we presented formulas for the scattering matrix for Whittaker models due to Kazhdan and Patterson \cite{KazhdanPatterson} and later used in \cite{McNamaraCS} to compute spherical Whittaker functions for covers of quasi-split $p$-adic reductive groups. In this section, we provide a new
approach to performing this calculation based on the schema introduced in Section~\ref{schemasect}. It applies more broadly to any finite dimensional space of functionals
for which this scattering matrix may be computed. And it produces operators on complex vector spaces which recover the
metaplectic Demazure operators generalizing \cite{BrubakerBumpLicata} and
appearing in \cite{ChintaGunnellsPuskas, PuskasWhittaker, PatnaikPuskas}.

By a ``model'' for a representation, we mean an embedding of the representation space into a space of complex-valued functions characterized by its transformation properties under a particular subgroup. Given a finite dimensional model for principal series $M(\mathbf{z})$ on a cover of a split, reductive $p$-adic group, we will compute the image of $\phi^\circ$, the unique-up-to-constant spherical vector in $M(\mathbf{z})$, in this finite dimensional model. According to our two step induction construction of the metaplectic principal series, $\phi^\circ$ is an $i(\chi)$-valued function, but one may obtain a complex-valued function by composing with a functional in $i(\chi)^\ast$. Note that
$$ \phi^\circ = \sum_{w \in W} \phi_w $$
where $\phi_w$ is a function supported on the disjoint coset $\tilde{B} w J$,
with $J$ now denoting the embedding of the Iwahori subgroup in $\tilde{G}$,
over which the cover splits, normalized so $\phi_w(w)=1$ with representative
$w$ as in Remark~\ref{weylchoice}.
Just as in Section~\ref{whitheck}, this Iwahori subgroup $J$ is the inverse
image of the negative Borel subgroup $B^- (\mathbb{F}_q)$ under the reduction
mod $\mathfrak{p}$ map $G (\mathfrak{o}) \longrightarrow G (\mathbb{F}_q)$.
  
Given a coweight $\lambda\in\Lambda$, we then obtain
\begin{equation} \mathcal{F}_i^{\mathbf{z}}(\phi^\circ) (\pi^\lambda) = \sum_{w \in W} \mathcal{F}_i^{\mathbf{z}}(\phi_w) (\pi^\lambda). \label{sphercalc} \end{equation}

The next result shows that, under certain assumptions, each summand in the above expression may be calculated as the action of operator $T_w$ as discussed in Section~\ref{schemasect}.

\begin{proposition}\label{calc-CS} Let $G$ be a split, reductive group defined over a nonarchimedean local field $F$.  Let $M(\mathbf{z})$ denote the unramified principal series on a metaplectic cover $\tilde{G}$ of $G(F)$ induced from a character $\chi_{\mathbf{z}}$, as above. Let $\mathcal{F}_i$, $i = 1, \ldots, k$, span the finite-dimensional space of embeddings of $M(\mathbf{z})$ into the given model and let $\lambda$ be a coweight. We make the following assumption:
\begin{center}
For all $1 \leqslant i \leqslant k$, $\mathcal{F}_i^{\mathbf{z}}(\phi_1)(\pi^\lambda) = f_i^\lambda(\mathbf{z}),$ for some polynomials $f_i^\lambda \in \mathbb{C}[P^\vee]$.
\end{center}
Then for $w \in W$,
\begin{equation} \mathcal{F}_i^{\mathbf{z}}(\phi_w) (\pi^\lambda) = \left[ T_w \cdot \begin{pmatrix} f_1^\lambda(\mathbf{z}) \\ \vdots \\  f_k^\lambda(\mathbf{z}) \\ - \\ \vdots \\ - \\ f_1^\lambda(w_0 \mathbf{z}) \\ \vdots \\  f_k^\lambda(w_0 \mathbf{z}) \end{pmatrix} \right]_i \label{twinmodel} \end{equation}
where $T_w$ is an operator on $\mathfrak{M}(\mathbf{z}) = \bigoplus_{w \in W} \mathcal{M}(w \mathbf{z})$, and each $\mathcal{M}(w \mathbf{z}) \simeq \mathbb{C}^k$. We define 
$$ T_w := T_{s_{i_1}} \cdots T_{s_{i_\ell}} \quad \text{if $w = s_{i_1} \cdots s_{i_\ell}$ is reduced.} $$ 
To any simple reflection $s := s_\alpha$, the action of $T_s$ on an element of $\mathcal{M}(w\mathbf{z})$ with $\ell(sw) = 1 + \ell(w)$ is supported in $\mathcal{M}(w \mathbf{z}) \oplus \mathcal{M}(sw \mathbf{z})$. Under the isomorphisms $\mathcal{M}(w \mathbf{z}) \simeq \mathbb{C}^k$, $T_s$ acts on elements $(v_w, v_{sw}) \in \mathbb{C}^k \times \mathbb{C}^k$ by
\begin{equation} T_{s} \cdot (v_w, v_{sw}) = 
\begingroup
\renewcommand*{\arraystretch}{1.5}
\begin{pmatrix} D^{(n)}_{\alpha}(w \mathbf{z}) \cdot I_{k} & (b_{i,j}^s(sw\mathbf{z})) \\ (b_{i,j}^s(w\mathbf{z})) & D^{(n)}_{\alpha}(s w \mathbf{z}) \cdot I_{k} \end{pmatrix}
\endgroup \cdot \begin{pmatrix} v_w \\ v_{sw} \end{pmatrix}, \label{tsblockaction} \end{equation}
where $I_{k}$ is the $k \times k$ identity matrix. The operator $D^{(n)}_{\alpha}(\mathbf{z})$ is obtained from $D_i(\mathbf{z})$ in the proof of Theorem~\ref{hecketheorem} with $\alpha^\vee = \alpha_i^\vee$ by the substitution $\mathbf{z}^{\alpha^\vee} \mapsto (\mathbf{z}')^{\alpha'}$ as in (\ref{metacoroot}) and setting $v = q^{-1}$. The notation $[ \cdot ]_i$ indicates that we take the $i$-th component of the resulting vector.

Moreover, the scattering matrices $(b_{i,j}^s(\mathbf{z}))$ satisfy equation (\ref{doublesch}) with $\mathbf{z}^{\alpha^\vee} \mapsto (\mathbf{z}')^{\alpha'}$, and equation (\ref{abraidc}) for the metaplectic affine Hecke algebra $\tilde{\mathcal{H}}_v$ associated to $\hat{G}' (\mathbb{C})$. Thus, according to Theorems~\ref{hecketheorem} and~\ref{hecketheoremaffine}, $\mathfrak{M}(\mathbf{z})$ is an $\tilde{\mathcal{H}}_v$-module.
\label{specialvectorcalc}
\end{proposition}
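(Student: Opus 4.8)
The plan is to prove the three assertions in increasing order of depth. First I would show the purely formal part: that the scattering matrices $(b^s_{i,j}(\mathbf{z}))$ satisfy the Assumptions~\ref{assofuandme} for the torus $\widehat{T}'(\C)$ and root system $\Phi'$, so that Theorems~\ref{hecketheorem} and~\ref{hecketheoremaffine} apply directly. For the ``double-braid'' scalar, fix a simple reflection $s=s_\alpha$ and recall from (\ref{metascalar}) that $\Ai_s^{s\mathbf{z}}\circ\Ai_s^{\mathbf{z}}$ acts on $M(\mathbf{z})$ as the scalar given there, which is exactly (\ref{doublesch}) after the substitution $\mathbf{z}^{\alpha^\vee}\mapsto(\mathbf{z}')^{\alpha'}$ of (\ref{metacoroot}). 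Composing with $\mathcal{F}_i^{\mathbf{z}}$ on the left and applying (\ref{scatmat}) twice --- once to $\Ai_s^{\mathbf{z}}$, once to $\Ai_s^{s\mathbf{z}}$ (using $s(s\mathbf{z})=\mathbf{z}$) --- and then using that the $\mathcal{F}_l^{\mathbf{z}}$ are linearly independent, one gets $\sum_j b^s_{ij}(s\mathbf{z})\,b^s_{jl}(\mathbf{z})=(\text{that scalar})\,\delta_{il}$. For the braid relation (\ref{abraidc}) one uses that the unnormalized intertwiners compose along a reduced word, $\Ai_w^{w'\mathbf{z}}\circ\Ai_{w'}^{\mathbf{z}}=\Ai_{ww'}^{\mathbf{z}}$ whenever $\ell(ww')=\ell(w)+\ell(w')$; hence the two sides of a braid relation $s_is_j\cdots=s_js_i\cdots$ in $W$ give the same operator $\Ai^{\mathbf{z}}_{s_is_j\cdots}:M(\mathbf{z})\to M(s_is_j\cdots\mathbf{z})$ (the choice of Weyl representative being harmless by Remark~\ref{weylchoice}), and iterating (\ref{scatmat}) along each word and comparing coefficients against the basis $\{\mathcal{F}_l^{\mathbf{z}}\}$ yields (\ref{abraidc}) for all orders. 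With these in hand, Theorems~\ref{hecketheorem} and~\ref{hecketheoremaffine} give the $\widetilde{\mathcal{H}}_v$-module structure on $\mathfrak{M}(\mathbf{z})$, and the block-matrix description (\ref{tsblockaction}) of $T_s$ is then simply (\ref{tiaction}) rewritten with the scattering matrices in place of $\Aa_{s_i}$ and with $D_i$ replaced by its metaplectic avatar $D^{(n)}_\alpha$ via the same substitution $\mathbf{z}^{\alpha^\vee}\mapsto(\mathbf{z}')^{\alpha'}$.

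The substantive claim is (\ref{twinmodel}), which I would prove by induction on $\ell(w)$, establishing it simultaneously for all base points in the regular set of $\widehat{T}'(\C)$. The base case $w=1$ is the hypothesis $\mathcal{F}_i^{\mathbf{z}}(\phi_1)(\pi^\lambda)=f_i^\lambda(\mathbf{z})$, which identifies the $\mathcal{M}(\mathbf{z})$-summand of the seed vector; the remaining summands are the Weyl translates displayed in (\ref{twinmodel}), so the seed is the $W$-invariant section of $\mathfrak{M}$ determined by $(f_i^\lambda)_i$ (here one uses, exactly as in Section~\ref{whitheck}, that $\mathfrak{M}$ carries a $W$-action compatible with the inducing parameter and commuting with the Hecke action). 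For the inductive step write $w=sw'$ with $\ell(w)=\ell(w')+1$. The essential geometric input is a Casselman-type recursion for the Iwahori vectors: since $U(F)\cap sU^-(F)s^{-1}$ is the one-parameter subgroup $U_\alpha(F)$, the integral (\ref{intertwinerwithcs}) defining $\Ai_s^{\mathbf{y}}$ reduces to a rank-one computation along the $\SL_2$ (or its metaplectic cover) attached to $\alpha$, and that computation --- Casselman's in the split case, and in the metaplectic case the same rank-one calculation that underlies (\ref{tauone})--(\ref{tautwo}) and (\ref{metascalar}) --- shows that
\[
\Ai_s^{\mathbf{y}}\,\phi_{w'}^{\mathbf{y}}\;=\;\phi_{w}^{s\mathbf{y}}\;-\;D^{(n)}_\alpha(s\mathbf{y})\,\phi_{w'}^{s\mathbf{y}},
\]
the coefficient being the metaplectic Demazure scalar because the non-abelian structure of $\widetilde{T}$ enters only through the dilation $\alpha^\vee\mapsto n_\alpha\alpha^\vee$. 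Solving for $\phi_w^{s\mathbf{y}}$, applying $\mathcal{F}_i^{s\mathbf{y}}$, evaluating at $\pi^\lambda$, rewriting $\mathcal{F}_i^{s\mathbf{y}}\circ\Ai_s^{\mathbf{y}}$ via (\ref{scatmat}), and putting $\mathbf{y}=s\mathbf{z}$ gives
\[
\mathcal{F}_i^{\mathbf{z}}(\phi_w^{\mathbf{z}})(\pi^\lambda)\;=\;D^{(n)}_\alpha(\mathbf{z})\,\mathcal{F}_i^{\mathbf{z}}(\phi_{w'}^{\mathbf{z}})(\pi^\lambda)\;+\;\sum_j b^s_{ij}(s\mathbf{z})\,\mathcal{F}_j^{s\mathbf{z}}(\phi_{w'}^{s\mathbf{z}})(\pi^\lambda).
\]
By the inductive hypothesis the two terms on the right are the $\mathcal{M}(\mathbf{z})$- and $\mathcal{M}(s\mathbf{z})$-components of $T_{w'}$ applied to the seed --- the second via $W$-equivariance, which converts ``the $\mathcal{M}(\mathbf{z})$-component at base point $s\mathbf{z}$'' into ``the $\mathcal{M}(s\mathbf{z})$-component at base point $\mathbf{z}$'' --- so the right side is precisely the $\mathcal{M}(\mathbf{z})$-component of $T_s T_{w'}=T_w$ applied to the seed, by (\ref{tsblockaction}) with $u=1$. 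This is (\ref{twinmodel}) for $w$, and the result is independent of the chosen reduced word by the braid relations established in the first step.

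The hard part will be the displayed geometric recursion for $\Ai_s^{\mathbf{y}}$ on the $i(\chi)$-valued Iwahori vectors $\phi_\bullet$: one must verify that the intertwining integral (\ref{intertwinerwithcs}) genuinely collapses to a rank-one metaplectic $\SL_2$-computation whose effect on the basis $\{\phi_\bullet\}$ is transport $\phi_{w'}\mapsto\phi_w$ together with the scalar $-D^{(n)}_\alpha(s\mathbf{y})$ on $\phi_{w'}$ --- equivalently, that the twist carried by $i(\chi)$ contributes only the dilation $\alpha^\vee\mapsto n_\alpha\alpha^\vee$, so that $D^{(n)}_\alpha$ rather than $D_i$ appears. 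This is where one leans on the rank-one computations of Kazhdan--Patterson~\cite{KazhdanPatterson} and McNamara~\cite{McNamaraCS} (and, in the split case, on Casselman~\cite{CasselmanSpherical}). Everything else --- the quadratic and braid relations for the $b^s_{ij}$, and the reassembly of the one-step recursion into (\ref{twinmodel}) --- is formal, provided one keeps careful track of base points and of which summand of $\mathfrak{M}(\mathbf{z})$ is being read off.
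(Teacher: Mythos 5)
Your proposal is correct and follows essentially the same strategy as the paper: first verify that the scattering matrices satisfy the Assumptions of Section~\ref{schemasect} (the double-braid scalar from (\ref{metascalar}) and the braid relation from compositionality of unnormalized intertwiners along reduced words), then deduce (\ref{twinmodel}) by induction on $\ell(w)$ from the metaplectic Casselman recursion. Your one-step recursion
\(\Ai_s^{\mathbf{y}}\phi_{w'}^{\mathbf{y}}=\phi_{w}^{s\mathbf{y}}-D^{(n)}_\alpha(s\mathbf{y})\phi_{w'}^{s\mathbf{y}}\)
is an algebraic rearrangement of the paper's (\ref{rankoneinter}), since \(c_s^{(n)}(\mathbf{z})-1=D^{(n)}_\alpha(\mathbf{z})\). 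The only place you diverge is in the treatment of that recursion: you assert the coefficient directly and point to Kazhdan--Patterson and McNamara, whereas the paper reduces the rank-one case to the known action of $\Ai_s$ on the spherical vector, namely $\Ai_s^{\mathbf{z}}(\phi^\circ)^{\mathbf{z}}=c_s^{(n)}(\mathbf{z})(\phi^\circ)^{s\mathbf{z}}$ together with $\phi^\circ=\phi_1+\phi_s$ and the evaluation $\Ai_s^{s\mathbf{z}}\phi_s^{s\mathbf{z}}(1)=q^{-1}\phi^\circ(1)$, and then promotes the rank-one identity to all $w$ by convolving with the characteristic function of $JwJ$ on the right (possible because these cosets lie in $K^\circ$, over which the cover splits). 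That convolution step is worth making explicit in your write-up, as it is what justifies passing from $w'=1$ to general $w'$. Your articulation of the inductive bookkeeping --- keeping track of base points $\mathbf{z}$ versus $s\mathbf{z}$ and reading the two inductive-hypothesis terms as the $\mathcal{M}(\mathbf{z})$- and $\mathcal{M}(s\mathbf{z})$-components of $T_{w'}$ applied to the common seed --- is in fact spelled out more carefully than in the paper, and is correct.
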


\begin{proof} First note that our description of the operators $T_w$ suffices to determine its action on $\mathfrak{M}(\mathbf{z})$. Indeed this space is spanned by functions supported on a single summand $\mathcal{M}(v \mathbf{z})$ for each $v \in W$, so it is enough to describe the action of $T_s$ on any such $\mathcal{M}(v \mathbf{z})$.

In order to show that the operators $T_w$ are well-defined, we must prove that the $T_{s_i}$ satisfy the braid relations. This follows because the intertwining operators $\Ai_s^\mathbf{z}$ on metaplectic principal series satisfy the braid relations (see Prop.~I.2.3(a) in \cite{KazhdanPatterson}; the proof for arbitrary split, reductive groups is a similar exercise in Fubini's theorem; see Proposition 4.1 in \cite{McNamaraCS}), and hence this property is inherited by the scattering matrix $(b_{i,j})$ upon composing with the functionals $\mathcal{F}_i$. That is, they satisfy the braiding assumption (\ref{abraidc}) of Section~\ref{schemasect}. Thus the braiding of the operators $T_w$ follows according to the proof of Theorem~\ref{hecketheorem}.

To show the relation (\ref{doublesch}) with $\mathbf{z}^{\alpha_i^\vee} \mapsto (\mathbf{z}')^{\alpha_i'}$ holds for the scattering matrix $(b_{i,j}^s(\mathbf{z}))$, note the scalar appears as the composition of intertwining operators according to Proposition~4.4 of~\cite{McNamaraCS}, and hence the claim follows by linearity of the functionals $\mathcal{F}_i$. Thus, since assumptions (\ref{doublesch}) and (\ref{abraidc}) of Section~\ref{schemasect} are in force, we may apply Theorems~\ref{hecketheorem} and~\ref{hecketheoremaffine} to obtain a $\tilde{\mathcal{H}}_v$ module structure for the Hecke algebra associated to $\hat{G}' (\mathbb{C})$ as in Section~\ref{whitandr}, with $v = q^{-1}$ and $q$ the cardinality of the residue field. Here, as noted above, the action of $T_{s_i}$ uses the operator $D^{(n)}_{\alpha}(\mathbf{z})$, which is obtained from $D_i(\mathbf{z})$ in the proof of Theorem~\ref{hecketheorem} with $\alpha^\vee = \alpha_i^\vee$ by the substitution $\mathbf{z}^{\alpha^\vee} \mapsto (\mathbf{z}')^{\alpha'}.$


Finally, to prove (\ref{twinmodel}) we will show that for any $w \in W$,
\begin{equation} \mathcal{F}_i^{\mathbf{z}}(\phi_w) (\pi^\lambda) = \left[ T_w \cdot \begin{pmatrix}  \vdots \\ \mathcal{F}_i^{\mathbf{z}}(\phi_1) (\pi^\lambda) \\ \vdots \\ - \\ \vdots \\ - \\ \vdots \\ \mathcal{F}_i^{w_0 \mathbf{z}}(\phi_1) (\pi^\lambda) \\ \vdots \end{pmatrix} \right]_i, \label{needtoshowfirst} \end{equation}
and hence the result will follow from our assumption that $\mathcal{F}_i^{\mathbf{z}}(\phi_1) (\pi^\lambda) = f_i^\lambda(\mathbf{z})$. But (\ref{needtoshowfirst}) follows by induction on the length $\ell(w)$. Indeed for a simple reflection $s=s_\alpha$ and an element $w \in W$ with $\ell(sw) = \ell(w) + 1$, we have

\begin{equation} \phi_{sw}^{w \mathbf{z}} =
  \Ai_s^{s w\mathbf{z}} \phi_w^{sw \mathbf{z}}  + (c_s^{(n)}(w\mathbf{z}) - 1) \phi_w^{w \mathbf{z}}, \quad
  c_s^{(n)}(\mathbf{z})
  = \frac{1-q^{-1} \mathbf{z}^{n_\alpha \alpha^\vee}}{1 - \mathbf{z}^{n_\alpha
      \alpha^\vee}}
  \label{rankoneinter}.
\end{equation}
This is a metaplectic version of Casselman's Theorem~3.4 in
\cite{CasselmanSpherical}, which was redone in the non-metaplectic case in
Proposition 3 of \cite{BrubakerBumpLicata}. Just as in Casselman's
(non-metaplectic) proof, it may be deduced from the rank one case (i.e.,
$w=1$). The rank one metaplectic case is proved using the equality
\begin{equation}
  \Ai_s^\mathbf{z} (\phi^\circ)^{\mathbf{z}} = c_s^{(n)}(\mathbf{z})
  (\phi^\circ)^{s\mathbf{z}},
  \label{sphericalaction}
\end{equation}
a consequence of Theorems~4.2 and 12.1 of \cite{McNamaraCS}, and the fact that in rank one, $\phi^\circ = \phi_1 + \phi_s$. Thus it is enough to evaluate both sides of (\ref{sphericalaction}) at the Weyl group elements $1$ and $s$ and compute that $\Ai_s^{s\mathbf{z}} \phi_s^{s\mathbf{z}} (1) = q^{-1} \phi^\circ(1)$. This latter equality is straightforward, since $\phi_s^{\mathbf{z}}(sn) = 0$ unless $n \in J \cap N(F) = N(\mathfrak{p})$. The general case of (\ref{rankoneinter}) then follows by convolving with characteristic functions of $JwJ$ on the right in the rank one identity. The convolution calculation is identical to the non-metaplectic case since the double cosets are all in $K^\circ = G(\mathfrak{o})$ (see Remark~\ref{weylchoice}) over which the cover splits.

Applying the functional $\mathcal{F}_i$ to both sides of (\ref{rankoneinter}) and noting that the scattering matrix for the intertwining operator is given by $(b^s_{i,j}(\mathbf{z}))$, it follows that $\mathcal{F}_i^{\mathbf{z}}(\phi_w) (\pi^\lambda)$ is indeed given by (\ref{needtoshowfirst}).
\end{proof}


The proposition thus gives the computation of standard basis elements in the Iwahori fixed vectors of $M(\mathbf{z})$. We may apply it to the calculation of the spherical vector in the model according to (\ref{sphercalc}), obtaining:
\begin{equation} \mathcal{F}_i^{\mathbf{z}}(\phi^\circ) (\pi^\lambda) = \sum_{w \in W} \mathcal{F}_i^{\mathbf{z}}(\phi_w) (\pi^\lambda) = \sum_{w \in W} \left[ T_w \cdot \begin{pmatrix} f_1^\lambda(\mathbf{z}) \\ \vdots \\  f_k^\lambda(\mathbf{z}) \\ - \\ \vdots \\ - \\ f_1^\lambda(w_0 \mathbf{z}) \\ \vdots \\  f_k^\lambda(w_0 \mathbf{z}) \end{pmatrix} \right]_i. \label{genericspherical} \end{equation}

\subsection*{Example: Metaplectic Whittaker functions}

 The assumptions of the above proposition hold for metaplectic Whittaker functionals. Indeed, the space of such functionals is finite dimensional, and we may choose a basis of metaplectic Whittaker functionals as in Section~\ref{whitandr}, indexed by coset representatives $\nu$ for $\Lambda / \Lambda^{(n)}$. 
 
 \begin{lemma}
 To any coset representative $\nu$ in $\Lambda / \Lambda^{(n)}$, for any dominant coweight $\lambda$,
$$\mathcal{W}_{\nu}(\phi_1)(\pi^{-\lambda}) = \begin{cases} 0 & \textrm{if $\nu \not\equiv \lambda \pmod{\Lambda^{(n)}}$} \\ \mathbf{z}^{-\lambda} & \textrm{if $\nu \equiv \lambda \pmod{\Lambda^{(n)}}$}. \end{cases} $$ 
\label{supportoncosetslemma}
\end{lemma}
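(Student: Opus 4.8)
The plan is to reduce the statement to an explicit integral computation in the metaplectic principal series, following the blueprint of Proposition~\ref{phioneiwahori} in the non-metaplectic case (which is Proposition~1 of~\cite{BrubakerBumpLicata}), but keeping track of the extra bookkeeping coming from the $i(\chi)$-valued induction. Recall that $\phi_1$ is the function supported on the single coset $\tilde{B}\,1\,J = \tilde{B}J$, normalized so that $\phi_1(1) = v_0$, where $v_0 = \phi^\circ(1)$ spans $i(\chi)^{K^\circ\cap\tilde{T}}$. The functional $\mathcal{W}_\nu$ is $\mathbf{z}^{\mu}\cdot\mathscr{L}_\nu\circ W^\chi$ for $\nu = \varpi^\mu$, where $W^\chi(\phi) = \int_{U^-}\phi(u)\psi(u)\,du$ and $\mathscr{L}_\nu$ is dual to the basis $\{\pi_\chi(a)v_0\}$ of $i(\chi)$. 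So the quantity to compute is
\[
\mathcal{W}_\nu(\phi_1)(\pi^{-\lambda}) = \mathbf{z}^{\mu}\,\mathscr{L}_\nu\!\left(\int_{U^-(F)} \phi_1(u\,\pi^{-\lambda})\,\psi(u)\,du\right),
\]
and the task is to show this vanishes unless $\mu \equiv -\lambda$ (equivalently $\nu\equiv\lambda$) mod $\Lambda^{(n)}$, in which case it equals $\mathbf{z}^{-\lambda}$.

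First I would analyze the support of the integrand. The function $u\mapsto \phi_1(u\,\pi^{-\lambda})$ is nonzero only when $u\,\pi^{-\lambda}\in\tilde{B}J$, i.e. when $\pi^{-\lambda}$-conjugation brings $u$ into a controlled set; since $\lambda$ is dominant, $\pi^{-\lambda}$ contracts $U^-(F)$, and the standard Iwasawa-coordinate argument (exactly as in~\cite{CasselmanShalika}, Section~2, and~\cite{BrubakerBumpLicata}, Prop.~1) shows the integral localizes near $u\in U^-(\mathfrak{o})$, contributing a single term. On that term $\phi_1(u\,\pi^{-\lambda}) = \delta^{1/2}(\pi^{-\lambda})\,i(\chi)(\pi^{-\lambda})\,v_0$ up to the volume normalization, because $\pi^{-\lambda}\in\tilde{T}$ and $u\in J\cap U^-(F)$, while $\psi$ is trivial on $U^-(\mathfrak{o})$. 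The key metaplectic input is that $i(\chi)(\pi^{-\lambda})v_0 = \pi_\chi(\varpi^{-\lambda})v_0$ is, by construction of the basis, one of the basis vectors indexed by the class of $-\lambda$ in $\tilde{T}/H\cong\Lambda/\Lambda^{(n)}$, up to a scalar that is a value of $\chi$; so $\mathscr{L}_\nu$ of it is $\delta_{\mu,-\lambda\ \mathrm{mod}\ \Lambda^{(n)}}$ times that scalar. Multiplying by the prefactor $\mathbf{z}^\mu$ and by $\delta^{1/2}(\pi^{-\lambda})$, and using that $\delta^{1/2}(\pi^{-\lambda})$ is absorbed by the standard normalization of $\phi^\circ$ used throughout (so that the spherical value at the identity is $1$, cf. the conventions fixed before Proposition~\ref{phioneiwahori}), the surviving term collapses to $\mathbf{z}^{-\lambda}$. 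The vanishing for $\nu\not\equiv\lambda$ is then immediate from $\mathscr{L}_\nu(\pi_\chi(a)v_0)=\delta_{a,\nu}$, since $a=\varpi^{-\lambda}$ lies in a different $H$-coset.

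I expect the main obstacle to be the precise tracking of the various normalizing scalars: the volume $\mathrm{vol}(U^-(\mathfrak{o}))$, the factor $\delta^{1/2}(\pi^{-\lambda})$, the value of $\chi$ relating $\pi_\chi(\varpi^{-\lambda})v_0$ to the chosen basis vector of $i(\chi)$, and the $\mathbf{z}^\mu$ twist built into~(\ref{normalizedfun}). These all interact, and the statement of the lemma (bare $\mathbf{z}^{-\lambda}$, no $\delta$-factor) reflects a particular choice of conventions; matching them requires care. I would handle this by working entirely in the $\GL(r)$ coordinates of Example~\ref{glnmetaex} if a uniform argument proves awkward, where $\Lambda^{(n)}=n\Lambda$ and $\pi_\chi(\varpi^{-\lambda})v_0$ is literally $v_{i_1}(z_1^n)\otimes\cdots$ for $i_j$ the residues of $-\lambda$ mod $n$, so the combinatorics of~(\ref{normalizedfun}) and the identification $\theta_{\mathbf z}$ of Theorem~\ref{thm:BBB} make the bookkeeping transparent; the general split reductive case follows by the same rank-one reduction that was used repeatedly above (it is again a matter of the one-parameter subgroups $i_{\alpha^\vee}$ and Fubini's theorem, cf. Proposition~4.1 of~\cite{McNamaraCS}). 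Apart from this normalization bookkeeping, the argument is the standard contracting-Iwasawa-coordinate computation and contains no new difficulty.
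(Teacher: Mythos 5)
Your main line of attack is the right one and is essentially the same as the paper's. You compute the $i(\chi)$-valued Whittaker integral $\int_{U^-}\phi_1(u\,\pi^{-\lambda})\psi(u)\,du$ directly: for $\lambda$ dominant, $\pi^{-\lambda}$ contracts $U^-$, the integral localizes near $U^-(\mathfrak{o})$, one pulls $\pi^{-\lambda}$ out to the left (picking up $\delta^{1/2}(\pi^{-\lambda})\,i(\chi)(\pi^{-\lambda})v_0$), and then $\mathscr{L}_\nu$ selects the single basis vector. The paper does not redo this computation; it simply cites Lemma~6.3 of~\cite{McNamaraCS}, which is exactly this Iwahori-factorization calculation, together with a remark about the $\phi_1$ vs.\ $\phi_{w_0}$ convention. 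So you are re-deriving what the paper imports.

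The flaw is in your fallback plan. You say that if a uniform argument is awkward you would work in $\GL(r)$ coordinates and then pass to the general split reductive case "by the same rank-one reduction\ldots (cf.\ Proposition~4.1 of~\cite{McNamaraCS})." That Fubini/rank-one reduction is for \emph{intertwining operators} associated to simple reflections, where the analysis reduces to embedded $\widetilde{\SL}_2$'s. It does not apply to the present lemma, which is the evaluation of a Whittaker function at an \emph{arbitrary} torus element $\pi^{-\lambda}$; there is no single root subgroup to reduce to and no Fubini decomposition available. What actually makes the computation uniform over Cartan types is the Iwahori factorization (which is how both \cite{BrubakerBumpLicata} Proposition~1 and McNamara's Lemma~6.3 proceed), so the fallback should be dropped. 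Separately, be careful with the parenthetical ``(equivalently $\nu\equiv\lambda$)'': since $\nu=\varpi^\mu$, the claims $\mu\equiv-\lambda$ and $\nu\equiv\lambda$ are not interchangeable unless you have carefully fixed whether the basis vector $\pi_\chi(a)v_0$ is supported on the coset of $a$ or of $a^{-1}$ under the induced action. This sign, and the disappearance of the $\delta^{1/2}(\pi^{-\lambda})$ factor relative to Proposition~\ref{phioneiwahori}, are exactly the normalization bookkeeping that the $\mathbf{z}^\mu$ twist in (\ref{normalizedfun}) and McNamara's conventions are designed to handle; if you do the computation yourself rather than cite it, these must be pinned down.
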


\begin{proof}
Recall that $i(\chi)$ was the induced representation on $\tilde{T}$ from a
character $\chi$ on the maximal abelian subgroup $H$. The space
$I(\chi)^{K^\circ} \simeq i(\chi)^{K^\circ \cap \tilde{T}}$ under the map
$\phi^\circ \mapsto \phi^\circ(1) = \phi_1(1)$, where the equality follows
from the Iwahori-Bruhat decomposition. Set $v_0 = \phi_1(1)$. Then $\{
\pi_\chi(a) v_0 \}$ with $a \in \tilde{T} / H$ is a basis for the
representation $i(\chi)$. These may be indexed by elements $\mu \in \Lambda /
\Lambda^{(n)}$ with $a = \pi^\mu$.  The Whittaker functionals $W_{\nu}$ are
obtained by composing the $i(\chi)$-valued Whittaker functional with a
functional on $i(\chi)$. These functionals may be indexed by elements $\nu$ in
the dual basis for $i(\chi)^\ast$, likewise indexed by $\Lambda /
\Lambda^{(n)}$.  Now use Lemma~6.3 of McNamara \cite{McNamaraCS}, which
connects the right translation action by $\tilde{T}$ on Whittaker functions
(resulting in evaluation at $t^\lambda$ with $\lambda \in \Lambda$) to the
basis of $i(\chi)$ made from translates of $\phi^\circ(1) = \phi_1(1)$. This
is accomplished using the Iwahori factorization. Note that our Whittaker
functional is defined without the use of the long element $w_0$, so applies to
the standard basis element $\phi_1$ rather than McNamara's $\phi_{w_0}$ in his
Lemma 6.3.
\end{proof}

The lemma shows that precisely one component in each of the $|W|$ blocks for the vector $(\mathcal{W}_{\nu}^{w \mathbf{z}}(\phi_1) (\pi^{-\lambda}))_{\nu,w}$ is non-zero. Thus Proposition~\ref{specialvectorcalc} applies to all dominant coweights $\lambda$. (In fact, it applies to all coweights, but the Whittaker functional is easily seen to vanish off of the dominant cone.) The resulting formula for the spherical Whittaker function from (\ref{genericspherical}) is analogous to the theorem and corollary in Section 5.4 of \cite{PatnaikPuskas}. 


We end by explaining that the operators obtained by the action of $T_w$ on $(\mathcal{W}_{\nu}^{w \mathbf{z}}(\phi_1) (\pi^{-\lambda}))_{i,w}$ are closely related to the metaplectic Demazure operators of \cite{ChintaGunnellsPuskas}, using the metaplectic action of the Weyl group on the coweight lattice as given by Chinta and Gunnells. Indeed, these operators arise in the calculation of a particular Whittaker functional $\mathcal{W}_\Sigma := \sum_{\nu \in \Lambda / \Lambda^{(n)}} W_\nu$. It is this metaplectic Whittaker functional, evaluated on the spherical vector, that has been repeatedly explored in earlier papers.

Recall that to a simple reflection $s := s_\alpha$, given any representative coweight $\mu$ in the quotient lattice $\Lambda / \Lambda^{(n)}$,
one can define the metaplectic Chinta-Gunnells action $s \cdot f$ for Laurent polynomials
$f \in \mathbb{C}[\Lambda]$ supported on coweights in $\mu + \Lambda^{(n)}$. It may extended to the fraction field of $\mathbb{C}[\Lambda]$ (see for example Section~3 of \cite{ChintaGunnells} or Section~2 of \cite{ChintaGunnellsPuskas}).
Indeed, according to Equation (7) in \cite{ChintaGunnellsPuskas},
\begin{multline} c^{(n)}_{s_i}(\mathbf{z}) s_i \cdot f := f^{s_i} \left[ \mathbf{z}^{-\textrm{rem}_{n_\alpha} \left(\frac{-B(\alpha^\vee, \mu)}{Q(\alpha^\vee)} \right) \alpha^\vee} (1-q^{-1}) (1-\mathbf{z}^{n_\alpha \alpha^\vee})^{-1}  \right. \\ \left. - q^{-1} g(Q(\alpha^\vee)-B(\alpha^\vee, \mu)) \mathbf{z}^{(1-n_\alpha) \alpha^\vee} \right]. \label{cgaction} \end{multline}
Recall that $B(\alpha^\vee, \mu)$ is the bilinear form accompanying the construction of the metaplectic cover, with corresponding quadratic form $Q(\alpha^\vee) := B(\alpha^\vee, \alpha^\vee) / 2,$ and $Q(\alpha^\vee)$ divides $B(\alpha^\vee, \mu)$ according to \cite{McNamaraPrincipal}, Theorem 11.1. Further, $\textrm{rem}_{n(\alpha)}$ denotes the remainder mod $n_\alpha$, taking values in $[0, n-1]$, upon division of the integral quantities in the argument. Moreover, $g(i)$ is the $n$-th order Gauss sum described in Section~\ref{whitandr}. Note that the action is well-defined as it is independent of coweight representative $\mu \in \Lambda / \Lambda^{(n)}$. It can be extended to all polynomials $f$ by additivity. The function $c^{(n)}_s(\mathbf{z})$ is as in (\ref{rankoneinter}).

Then the metaplectic Demazure operator $\mathcal{T}_i$ corresponding to the simple reflection $\alpha_i$ is defined in Equation (11) of \cite{ChintaGunnellsPuskas} by
\begin{equation} \mathcal{T}_i (f) := D_i^{(n)}(\mathbf{z}) f - \mathbf{z}^{n_{\alpha_i} \alpha_i^\vee} c^{(n)}_{s_i}(\mathbf{z}) s_i \cdot f. \label{metdemop} \end{equation}
Then $\mathcal{T}_w$ is defined as a the product $\mathcal{T}_{i_1} \cdots \mathcal{T}_{i_\ell}$ if $w = s_{i_1} \cdots s_{i_\ell}$ is reduced, as usual.

\begin{theorem}\label{met-dz}
The metaplectic Demazure operator $\mathcal{T}_w$ defined from (\ref{metdemop}) agrees with the action of the $T_w$ defined according to (\ref{tsblockaction}) for the spherical Whittaker function with Whittaker functional $W_\Sigma$. 
\end{theorem}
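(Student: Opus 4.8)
The plan is to show that the two families of operators, $\{\mathcal{T}_i\}$ from \eqref{metdemop} and $\{T_{s_i}\}$ from \eqref{tsblockaction}, act in the same way once we identify the vector of functions $(f_i^\lambda(w\mathbf{z}))_{i,w}$ appearing in Proposition~\ref{specialvectorcalc} with the single Laurent polynomial $f \in \mathbb{C}[\Lambda]$ on which the Chinta--Gunnells operators act. The key observation is that for the specific functional $W_\Sigma = \sum_{\nu} W_\nu$, Lemma~\ref{supportoncosetslemma} tells us that only one component of each $|W|$-block is nonzero, and the nonzero entries assemble, via the coset decomposition $\Lambda = \bigsqcup_\nu (\nu + \Lambda^{(n)})$, into exactly one Laurent polynomial in $\mathbb{C}[\Lambda]$. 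Under this identification, evaluating $\mathcal{F}_i^{\mathbf{z}}(\phi_1)(\pi^{-\lambda})$ against translates recovers the coefficient of $\mathbf{z}^{-\lambda}$, so the "vector of polynomials" picture and the "single polynomial acted on by the Chinta--Gunnells Weyl group action" picture are genuinely the same data.

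Given this dictionary, it suffices to check the statement on simple reflections, since both $T_w$ and $\mathcal{T}_w$ are defined as products over reduced words and the braid relations hold on both sides (for the $T_w$ by Proposition~\ref{calc-CS}, for the $\mathcal{T}_w$ by the results of \cite{ChintaGunnellsPuskas}). For a simple reflection $s = s_\alpha$, I would compare the block action \eqref{tsblockaction} term by term with \eqref{metdemop}. The diagonal contribution $D^{(n)}_\alpha(w\mathbf{z}) \cdot I_k$ matches the $D^{(n)}_i(\mathbf{z}) f$ term in \eqref{metdemop} directly. The off-diagonal contribution, given by the scattering matrix $(b^s_{i,j}(\mathbf{z}))$, is---by Proposition~\ref{calc-CS} and Section~\ref{whitandr}---precisely the Kazhdan--Patterson matrix $\tau_{\nu,\mu} = \tau^1_{\nu,\mu} + \tau^2_{\nu,\mu}$ from \eqref{tauone} and \eqref{tautwo}, after composing with the spanning functionals. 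So the content of the theorem reduces to the identity: acting by the matrix $(\tau^1 + \tau^2)$ on the block-vector of $f$-values, re-assembled into a Laurent polynomial, equals $-\mathbf{z}^{n_\alpha \alpha^\vee} c^{(n)}_s(\mathbf{z})\, s \cdot f$ with $s\cdot f$ the Chinta--Gunnells action \eqref{cgaction}.

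This last identity is essentially a matter of matching the two explicit formulas. Multiplying \eqref{cgaction} through by $-\mathbf{z}^{n_\alpha \alpha^\vee}$ and unwinding $c^{(n)}_s(\mathbf{z})$, the first bracketed term of \eqref{cgaction} produces a factor $\mathbf{z}^{-\mathrm{rem}_{n_\alpha}(\cdots)\alpha^\vee}(1-q^{-1})(1-\mathbf{z}^{n_\alpha\alpha^\vee})^{-1}$ that should be compared with $\tau^1_{\mu,\mu}$ in \eqref{tauone}: one checks that $n_\alpha \lceil \frac{B(\alpha^\vee,\mu)}{n_\alpha Q(\alpha^\vee)}\rceil - \frac{B(\alpha^\vee,\mu)}{Q(\alpha^\vee)}$ agrees with $-\mathrm{rem}_{n_\alpha}\bigl(\frac{-B(\alpha^\vee,\mu)}{Q(\alpha^\vee)}\bigr)$ (using that $Q(\alpha^\vee)\mid B(\alpha^\vee,\mu)$ by \cite{McNamaraPrincipal}, Theorem~11.1), up to the overall shift by $n_\alpha\alpha^\vee$. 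Likewise the Gauss-sum term $-q^{-1}g(Q(\alpha^\vee)-B(\alpha^\vee,\mu))\mathbf{z}^{(1-n_\alpha)\alpha^\vee}$ in \eqref{cgaction}, times $-\mathbf{z}^{n_\alpha\alpha^\vee}$, should reproduce $\tau^2_{s(\mu)+\alpha^\vee,\mu}$ from \eqref{tautwo} after accounting for the normalization of the Whittaker functionals noted after \eqref{tautwo} and the shift $\mu \mapsto s(\mu)+\alpha^\vee$ built into the Chinta--Gunnells Weyl action. I expect the main obstacle to be bookkeeping: keeping the coset-representative shifts, the ceiling/remainder conversion, and the $f^{s_i}$ versus $f(s_i\mathbf{z})$ conventions consistent across the three sources \cite{KazhdanPatterson,McNamaraCS,ChintaGunnellsPuskas}, each of which uses a slightly different normalization (in particular the $f \mapsto f'$ / $\mathbf{z}\mapsto\mathbf{z}^{-1}$ conjugation of Remark~\ref{antisphericalremark} must be tracked). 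Once the normalizations are reconciled the comparison is a finite rank-one check, which is exactly what makes the reduction to simple reflections the right strategy.
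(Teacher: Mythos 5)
Your proposal takes essentially the same route as the paper's proof: reduce to the case of a simple reflection (justified because both the $T_w$ and the $\mathcal{T}_w$ are defined multiplicatively over reduced words and both families satisfy the braid relations), match the diagonal block $D^{(n)}_\alpha(w\mathbf{z})I_k$ against the $D_i^{(n)}(\mathbf{z})\,f$ term of (\ref{metdemop}) by summing over the $W_\nu$ so the coset pieces of $f$ reassemble, and reduce the off-diagonal comparison to the explicit formulas (\ref{tauone}), (\ref{tautwo}) versus (\ref{cgaction}) using the ceiling-versus-remainder identity and the Gauss-sum conjugation.

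One normalization slip worth correcting: you assert that the scattering matrix $(b^s_{i,j})$ is \emph{precisely} the Kazhdan--Patterson matrix $(\tau^1_{\nu,\mu}+\tau^2_{\nu,\mu})$, but since Proposition~\ref{calc-CS} and (\ref{scatmat}) are formulated with the \emph{unnormalized} intertwiners $\Ai_s^{\mathbf{z}}$, one has $b^s_{\mu,\mu}=c^{(n)}_s(\mathbf{z})\,\tau^1_{\mu,\mu}$ and $b^s_{s(\mu)+\alpha^\vee,\mu}=c^{(n)}_s(\mathbf{z})\,\tau^2_{s(\mu)+\alpha^\vee,\mu}$, as the paper points out. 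This extra $c^{(n)}_s$ factor is exactly what converts the denominator $1-q^{-1}\mathbf{z}^{n_\alpha\alpha^\vee}$ appearing in (\ref{tauone}) into the denominator $1-\mathbf{z}^{n_\alpha\alpha^\vee}$ appearing in (\ref{cgaction}); without it your target identity ``$(\tau^1+\tau^2)$ acting $=-\mathbf{z}^{n_\alpha\alpha^\vee}c^{(n)}_s(\mathbf{z})\,s\cdot f$'' has a $c^{(n)}_s$ on one side but not the other. You should also track that the off-diagonal block acting on $M(sw\mathbf{z})$ is evaluated at the reflected argument, i.e.\ $(b^s_{i,j}(sw\mathbf{z}))$, not $(b^s_{i,j}(w\mathbf{z}))$. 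Finally, the $f\mapsto f'$ (equivalently $\mathbf{z}\mapsto\mathbf{z}^{-1}$) conjugation of Remark~\ref{antisphericalremark} does not enter here --- that remark is specific to the non-metaplectic Casselman--Shalika discussion of Section~\ref{whitheck} --- so you can drop that worry from the bookkeeping.
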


\begin{proof}
It suffices to check this for simple reflections $s_i$. We will show the two actions agree upon setting $v = q^{-1}$. 

For a simple reflection, the action of $T_{s_i}$ is by the block matrix in (\ref{tsblockaction}). The blocks containing the main diagonal act by the scalar $D_i^{(n)}(\mathbf{z})$. The terms in $f$ supported on $\mathbf{z}^\nu + \Lambda^{(n)}$ will be detected by the functional $W_\nu$, and thus appear in the $\nu$-th component of the column vector in (\ref{genericspherical}) according to Lemma~\ref{supportoncosetslemma}. In $W_\Sigma$, we sum over all $W_\nu$ and thus the total contribution to $W_\Sigma$ from diagonal blocks is just $D_i^{(n)}(\mathbf{z}) f$ as desired. Thus it suffices to show that that off-diagonal blocks in the action of $T_{s_i}$ match the second term in $\mathcal{T}_i$ above using the Chinta-Gunnells $W$-action.

For brevity, let $s=s_i$ and $\alpha^\vee = \alpha_i^\vee$. The off-diagonal blocks in $T_{s}$ act on the component $M(s\mathbf{z})$ in $\mathfrak{M}(\mathbf{z})$ by the scattering matrix $(b_{i,j}^s(s\mathbf{z}))$ according to (\ref{tsblockaction}). Each row of $(b_{i,j}^s)$ has two non-zero coefficients where the indices $(i,j)$ correspond to pairs of coset representatives $(\nu, \mu)$ in $\Lambda / \Lambda^{(n)}$ with either $\nu \equiv \mu$ (mod $\Lambda^{(n)}$) or $\nu \equiv s(\mu) + \alpha^\vee$ (mod $\Lambda^{(n)}$). In the notation of Section~\ref{whitandr}, the coefficients $b_{\mu, \mu}$ and $b_{s(\mu) + \alpha^\vee, \mu}$ are equal to $c_s^{(n)}(\mathbf{z}) \tau^1$ and $c_s^{(n)}(\mathbf{z}) \tau^2$, respectively, with $\tau^1$ and $\tau^2$ as in (\ref{tauone}) and (\ref{tautwo}). In particular, the $\tau$'s are multiplied by $c_s^{(n)}(\mathbf{z})$ because Proposition~\ref{calc-CS} is carried out with unnormalized intertwining operators appearing in (\ref{scatmat}). Thus the off-diagonal block acts on $M(s \mathbf{z})$ in $\mathfrak{M}(\mathbf{z})$, producing 
$$ c_{s}^{(n)}(s \mathbf{z}) \tau^1_{\mu, \mu}(s \mathbf{z}) \cdot (s \mathbf{z})^\mu + c_{s}^{(n)}(s \mathbf{z}) \tau^2_{s(\mu) + \alpha^\vee, \mu} (s\mathbf{z}) \cdot (s \mathbf{z})^{\mu} $$ 
in the model $W_\mu$. Thus in $W_\Sigma$, we take the sum of these contributions. 
Substituting the formulas for $\tau^1$ and $\tau^2$ and comparing to $- \mathbf{z}^{n_{\alpha} \alpha^\vee} c^{(n)}_{s}(\mathbf{z}) s \cdot f$ appearing in the metaplectic Demazure operator with $c^{(n)}_{s}(\mathbf{z}) s \cdot f$ as in (\ref{cgaction}) gives the result.
Note in particular that 
$$ n_\alpha \left\lceil \frac{B(\alpha^\vee, \mu)}{n_\alpha Q(\alpha^\vee)} \right\rceil -  \frac{B(\alpha^\vee, \mu)}{Q(\alpha^\vee)} = \textrm{rem}_{n_\alpha} \!\! \left(\frac{-B(\alpha^\vee, \mu)}{Q(\alpha^\vee)} \right), $$
and that the Gauss sums differ from each other as one involves the conjugate multiplicative character to the other. This difference amounts to a choice of embedding of the roots of unity in our local field $F$ into the complex numbers, so we may alter this choice in order to achieve an exact match.
\end{proof}

To conclude, let us explain how metaplectic Demazure operators
for arbitrary Cartan types may be written in terms of
$R$-matrices. More precisely, the operator $\mathcal{T}_{i}$ for a simple reflection $s = s_{\alpha_i}$ 
defined in (\ref{metdemop}) is expressed as the difference of a scalar times the function $f$ and a term $ \mathbf{z}^{n_{\alpha_i} \alpha_i^\vee} c^{(n)}_{s_i}(\mathbf{z}) s_i \cdot f$ which uses the Chinta-Gunnells metaplectic action. It is this latter term that may be rewritten in terms of $R$-matrices.

Indeed, the previous proof demonstrates that this term is expressible in terms of the scattering matrix for intertwining operators acting on the Whittaker models
of principal series. By Theorem~\ref{thm:BBB} with $r=2$, this action is in turn expressible using $R$-matrices.
The key point is that while the results of Section 5 are for type A,
the Demazure operator for a simple reflection requires only a rank one
computation. Thus we may restrict our attention to an embedded rank one
subgroup and apply Theorem~\ref{thm:BBB} with $r=2$ to the tensor of two evaluation modules $V_{\mathbf{z}^{n_\alpha \alpha}} \otimes V_{s \mathbf{z}^{n_\alpha \alpha}}$ as defined in Section~\ref{whitandr}.  This latter term may be written in terms of the $R$-matrices $\tilde{R}^\gamma(\mathbf{z}^{n_\alpha \alpha})$ as in (\ref{tildeRdefined}). Note that these $R$-matrices come from evaluation modules for $U_{\sqrt{v}}(\hat{\mathfrak{gl}}(n_\alpha))$.

\begin{appendix}

\section{A SAGE program to check Theorem~\ref{hecketheorem}}\label{appendix}

Below we provide a program written in SAGE that checks 
Theorem~\ref{hecketheorem}. It is sufficient to check
(\ref{quadratic}) and (\ref{firstbraid}) for the rank two
Cartan types $A_2$, $C_2$ and $G_2$.

  \begin{verbatim}
"""
The function check_relations() defined in this file
confirms (11) and (12) in Theorem 1.1. Tested with sage 7.4.
"""

# Edit the following line to select the Cartan Type "A2", "C2" or "G2".
CT = "G2"

P.<z1,z2,z3,v>=PolynomialRing(QQ)
W = WeylGroup(CT, prefix="s")
alpha = RootSystem(CT).ambient_space().simple_roots()

A={}
for i in [1,2]:
    for w in W:
        # indeterminates representing intertwining operators
        A[(w,i)]='a'+"%s_"%i+"".join(["%s"%j for j in w.reduced_word()])
P=PolynomialRing(QQ,A.values()+['z1','z2','z3','v'])
P.inject_variables()

# The following substitution is justified by (7) in the hypotheses of Theorem 1.1.
if CT == "A2":
    a2_121=a1_121*a2_21*a1_1/(a1_12*a2_2)
elif CT == "C2":
    a2_2121=a2_2*a1_12*a2_212*a1_2121/(a1_1*a2_21*a1_121)
elif CT == "G2":
    a2_212121=(a1_1212*a2_2*a2_21212*a1_12*a1_212121*a2_212)/ \
               (a2_2121*a2_21*a1_1*a1_121*a1_12121)

for i in [1,2]:
    for w in W:
        A[(w,i)] = eval(A[(w,i)])

if CT in ["A2","G2"]:
    def C(r):
        za = z1^(r[0])*z2^(r[1])*z3^(r[2])
        return (1-v*za)/(1-za)
elif CT == "C2":
    def C(r):
        za = z1^(r[0])*z2^(r[1])
        return (1-v*za)/(1-za)

def t(i, w, y):
    s = W.simple_reflection(i)
    if y==w:
        return C(alpha[i].weyl_action(w.inverse()))-1
    elif y==s*w:
        if y.bruhat_le(w):
            return A[(w,i)]
        else:
            # Constant from Assumptions: see (6)
            return C(alpha[i].weyl_action(y.inverse()))* \
                C(alpha[i].weyl_action(w.inverse()))/A[(y,i)]
    else:
        return 0

[T1,T2] = [Matrix([[t(i,w,y) for w in W] for y in W]) for i in [1,2]]

def check_relations():
    """
    Confirms the braid and quadratic relations. Usage:
    sage: check_relations()
    [True, True, True]
    """
    if CT == "A2":
        rel3 = (T1*T2*T1 == T2*T1*T2)
    elif CT == "C2":
        rel3 = (T1*T2*T1*T2 == T2*T1*T2*T1)
    elif CT == "G2":
        rel3 = (T1*T2*T1*T2*T1*T2 == T2*T1*T2*T1*T2*T1)
    return [T1^2 == (v-1)*T1 + v, T2^2 == (v-1)*T2 + v, rel3]

\end{verbatim}

\end{appendix}

\bibliographystyle{hplain}
\bibliography{sjduality.bib}

\end{document}